\documentclass[11pt]{amsart}
\usepackage{amsmath}
\usepackage{amssymb}
\usepackage{amsfonts}
\usepackage{latexsym}
\usepackage{color}
\usepackage{hyperref}
\usepackage{geometry}
\geometry{
	a4paper,
	total={140mm,230mm},
	left=35mm, 
	top=30mm,
}
  \voffset1cm

\newcommand{\be}{\begin{equation}}
\newcommand{\en}{\end{equation}}
\newcommand{\bea}{\begin{eqnarray}}
\newcommand{\ena}{\end{eqnarray}}
\newcommand{\beano}{\begin{eqnarray*}}
\newcommand{\enano}{\end{eqnarray*}}
\newcommand{\bee}{\begin{enumerate}}
\newcommand{\ene}{\end{enumerate}}

\newcommand{\mc}{\mathcal}

\newcommand{\R}{\mathbb{R}}

\newcommand{\h}{\mathcal{H}}

\newcommand{\F}{{\mathcal F}}

\newcommand{\B}{{\mc B}}

\newcommand{\D}{{\mc D}}

\newtheorem{defn}{Definition}[section]
\newtheorem{thm}[defn]{Theorem}
\newtheorem{prop}[defn]{Proposition}
\newtheorem{lemma}[defn]{Lemma}
\newtheorem{cor}[defn]{Corollary}
\newtheorem{example}[defn]{Example}
\newtheorem{rem}[defn]{Remark}

\def\x{\relax\ifmmode {\mbox{*}}\else*\fi}
\newcommand{\beex}{\begin{example}$\!\!${\bf }$\;$\rm }
\newcommand{\enex}{ \end{example}}
\newcommand{\berem}{\begin{rem}$\!\!${\bf }$\;$\rm }
\newcommand{\enrem}{ \end{rem}}
\newcommand{\bedefi}{\begin{defn}$\!\!${\bf }$\;$\rm }
\newcommand{\findefi}{\end{defn}}

\newcommand{\ip}[2]{\left\langle {#1}\left|{#2}\right.\right\rangle}

\def\H{{\mathcal H}}
\def\K{{\mathcal K}}
\def\J{{\mathcal J}}

\hyphenation{sa-tis-fy-ing con-ti-nui-ty func-tio-nal-s to-po-lo-gy}

\begin{document}
\title[Continuous frames]
{Continuous frames for unbounded operators}

\author{Giorgia Bellomonte}

\address{Dipartimento di Matematica e Informatica,
Universit\`a degli Studi di Palermo, I-90123 Palermo, Italy}
\email{giorgia.bellomonte@unipa.it}

\subjclass[2010]{42C15, 47A05, 47A63, 41A65.} \keywords{continuous $A$-frames, continuous weak $A$-frames, continuous atomic systems, unbounded operators.}
\date{\today}

\begin{abstract} Few years ago G\u{a}vru\c{t}a gave the notions of $K$-frame and  atomic system for a linear bounded operator $K$ in a Hilbert space $\H$ in order to decompose $\mathcal{R}(K)$, the range of $K$, with a frame-like expansion. These notions are here generalized to the case of a densely defined and possibly unbounded operator on a Hilbert space $A$ in a continuous setting, thus extending what have been done in a previous paper in a discrete framework. 
\end{abstract}

\maketitle
\section{Introduction}
The notion of discrete frame  was introduced by  Duffin and Schaefer in 1952 \cite{DuffinSchaeffer} even though it raised on the mathematical  and physical  scene in 1986 with the paper of  I. Daubechies, A. Grossmann, Y. Meyer because of their use in wavelet analysis. In the early '90s G. Kaiser \cite{Kaiser} and (independently) S.T. Ali, J.P. Antoine and J.P. Gazeau \cite{AAG} extended this notion to the continuous case. Over the years many extensions of frames have been introduced and studied. Most of them have been considered in the discrete case because of their wide use in applications e.g. in signal processing \cite{DuffinSchaeffer}. Frames have been studied for the whole Hilbert space or for a closed subspace until 2012, when L. G\u{a}vru\c{t}a \cite{gavruta}  gave  the notions of $K$-frame and of atomic system for a bounded operator $K$ everywhere defined on $\H$, thus generalizing the notion of frame and that of  atomic system for a subspace due to H.G. Feichtinger and T. Werther \cite{FW}. $K$-frames allow to write each element of $\mathcal{R}(K)$, the range
 of $K$, which is not a closed subspace in general, as a combination of the elements of the $K$-frame, which do
 not necessarily belong to $\mathcal{R}(K)$ with $K\in \B(\H)$.  $K$-frames have been generalized in \cite{AR} and \cite{HH} where the notion of $K$-g-frames was investigated and have been further generalized in 2018 to the continuous case in \cite{AROR}.   
 \\

 Let $\H$ be a Hilbert space with inner product $\ip{\cdot}{\cdot}$ and norm $\|\cdot\|$, $(X,\mu)$ a measure space where $\mu$ is a positive measure and  $A$  a densely defined operator on $\H$. Let $\psi:x\in X\to\psi_x\in\H$ be a  Bessel function, i.e. $\psi$ be such that for all $f\in\H$, the map $x \to \ip{f}{\psi_x}$ is a measurable function on $X$ and there exists a constant $\beta>0$ such that
 $ \int_X
 |\ip{f}{\psi_x}|^2 d\mu(x) \leq \beta \|f\|^2$,\ $\forall f\in\H$. Assume that for $f\in \D(A)$ (the domain of $A$) we have the decomposition

 $$\ip{Af}{u}=\int_X  a_f(x) \ip{\psi_x }{u}d\mu(x), \qquad \forall u\in \D(A^*).$$
 for some 	$a_f\in L^2(X,\mu)$. If $A$ is unbounded, the function $a_f$ can not depend continuously on $f$, differently to what occurs when $A$ is bounded. In  order to decompose the range of a densely defined unbounded operator $A$ as a combination of vectors in $\H$, we need somewhat which takes on its unboundedness.
 In literature there are some generalizations to the continuous case of the notion of $K$-frame (as, e.g., c-$K$-g-frames in \cite{AROR}), however, as far as the author knows, the case of an unbounded operator $K$ in $\H$ has been little considered. 

 In \cite{BC} this problem has been addressed in the discrete case. In the present paper both the approaches introduced in \cite{BC} are  extended  to the {\it continuous setting}.  One of the approaches involves a  Bessel function   $\psi$  and the coefficient function $a_f$  depends continuously on $f\in \D(A)$ only in the graph topology of $A$,  (which is stronger than the norm of $\H$); the other one involves a non-Bessel function $\psi$ but the coefficient function $a_f$  depends continuously on $f\in \D(A)$. In the latter approach,  the notions of continuous weak $A$-frame and  continuous weak atomic system for an unbounded operator $A$ are introduced and studied.
 
 { If   $\theta:X\to\H$ is a continuous frame for $\H$ then of course $$\ip{Af}{h}=\int_X \ip{Af}{\zeta_x}\ip{\theta_x}{h}d\mu(x),\qquad\forall f\in \D(A), h\in\H$$ where $\zeta:X\to\H$ is a dual frame of $\theta$. In contrast, if $\psi$ is a continuous weak $A$-frame, then there exists a Bessel function $\phi:X\to\H$ such that 	
 	$$
 	\ip{A h}{u}=\int_X  \langle h | \phi_x\rangle \ip{\psi_x }{u}d\mu(x),\qquad \forall h\in \D(A), u\in \D(A^*)
 	$$
 	and the action of the operator $A$ does not appear in the weak decomposition of the range of $A$.
 	Still, 	continuous weak $A$-frames  clearly call to mind continuous multipliers which are the object of interest of a recent literature even though unbounded multipliers, as far the authors knows, have been little looked over. For example, some initial steps toward this direction has been done, in the discrete case, in  	\cite{BagBell,BagBell2,Bag_Riesz,Bag_sesq,inoue} where some unbounded multipliers have been defined.  Therefore this paper can spure investigation in the direction of unbounded multipliers in the continuous case. \\
 
The paper is organized as follows. In Sect.  \ref{sec: preliminaries} we recall some well known definitions and introduce the generalized frame operator $S_\Psi$ which is the operator associated to a sesquilinear form defined by means of a function $\psi:x\in X\to\psi_x\in\H$. In Sect. \ref{sec: cont weak} we introduce, prove the existence (under opportune hypotheses) and study  the notions of continuous weak $A$-frame and continuous weak atomic system for a densely defined operator $A$ in a Hilbert space $\H$. To go into more detail, after having introduced and studied the notion of continuous weak $A$-frame, Subsection \ref{subsec: frame related operators} is devoted to the study of frame-related operators as the analysis, synthesis and (generalized) frame operators of a continuous weak $A$-frame.  In Subsection \ref{subsec: cont atomic syst}  the notion of continuous weak atomic system for an unbounded operator $A$ in Hilbert space $\H$ is given. 
Under some hypotheses, this notion is equivalent to that of continuous weak $A$-frame. Moreover, given a suitable function $\psi:x\in X\to\psi_x\in\H$, for every bounded operator $M\in\B(\h, L^2(X,\mu))$, an operator $A_M$ can be constructed in order   $\psi$  to be a continuous weak atomic system for $A_M$.  
Section \ref{sec: cont K-frames} is devoted to the second approach to the problem of decomposing the range of an unbounded operator in Hilbert space: we consider a bounded operator $K$ from a Hilbert space $\J$ into another one $\H$ and give some  results about both  continuous $K$-frames and continuous atomic systems for $K$ and about their frame-related operators, then in  Subsection \ref{subsec: cont atomic sys for unbounded op}, we use them to study the case of an unbounded closed and densely defined operator $A:\D(A)\to\H$ viewing it as a bounded one $A:\H_A\to\H$, where $\H_A$ is the Hilbert space obtained by giving $\D(A)$ the graph norm.

\section{Definitions and preliminary results}\label{sec: preliminaries}
Throughout the paper we will denote by $\H$ a complex Hilbert space with inner product $\ip{\cdot}{\cdot}$ (linear in the first entry) and induced norm $\|\cdot\|$, by $(X,\mu)$ a $\sigma$-finite measure space (i.e. $X$ can be covered with at most countably many measurable,  possibly disjoint, sets $\{X_n\}_{n\in\mathbb{N}}$ of finite measure), 
 by $\B(\H)$  the Banach space of bounded linear operators from $\H$ into $\H$.
For brevity we will indicate by
$L^2(X,\mu)$  the class of all $\mu$-measurable functions
$f : X\to\mathbb{C}$ such that
$$\|f\|_2^2=\int_X|f(x)|^2d\mu(x)<\infty,$$ by identifying functions which differ only on a $\mu$-null subset of $X$.
\\

Let us briefly recall the notion of continuous frame (see e.g. \cite[Definition 4.1]{Kaiser}, \cite[Definition 2.1]{AAG},  \cite[Definition 5.6.1]{ole})\begin{defn}
A continuous frame for $\H$ is a function $\psi:x\in X\to\psi_x\in\H$ for which
	\begin{itemize}
		\item[i)] for all $f\in\H$, the map $x \to \ip{f}{\psi_x}$ is a measurable function on $X$ (i.e. the function $\psi$ is  weakly measurable),
		\item[ii)] there exist constants $\alpha,\beta>0$ such that
		\begin{equation}\label{defn cont frame for H}\alpha \|f\|^2 \leq \int_X
		|\ip{f}{\psi_x}|^2 d\mu(x) \leq \beta \|f\|^2, \qquad\forall f\in\H.
		\end{equation}	
	\end{itemize}
	The function $\psi$ is called \emph{a Bessel function}  if at least the upper condition in \eqref{defn cont frame for H} holds. If $\alpha=\beta=1$ then the function $\psi$ is called a  Parseval function.
\end{defn}

The main feature of a frame, hence of a continuous
	frame too, is the possibility of writing each vector of a  Hilbert space as a sum of a infinite linear  combination of vectors in the space getting rid of rigidness of orthonormality of the vectors of a basis and of the uniqueness 	of the decomposition, but still  maintaining  
	numerical stability of the reconstruction and fast convergence. By a continuous frame it is possible to represent every element of the Hilbert space by a  reconstruction
	formula: if $\psi:x\in X\to\psi_x\in\H$ is a continuous frame for the
	Hilbert space $\H$, then any $f\in\H$ can be expressed as
$$f =\int_X
	\ip{f}{\phi_x}\psi_x d\mu(x),$$
	where  $\phi:x\in X\to\phi_x\in\H$ is  a function called  dual of $\psi$ and the integrals have to be understood
	in the weak sense, as usual.

\subsection{Frame-related operators and sesquilinear forms}\label{subsec: frame related op and sesq form}In this section we recall the definitions of the main operators linked to a $\psi:x\in X\to\psi_x\in\H$ and prove some results about them.  We want to drive the attention of the reader on the fact that, in contrast with the discrete case where some results involve strong convergence \cite{BC}, in the continuous case we can prove our results just in  weak sense.\\

In the sequel we will  briefly indicate the range $\{\psi_x\}_{x\in X}$ of a function $\psi:x\in X\to\psi_x\in\H$  by $\{\psi_x\}$. \\

Consider the function $\psi:x\in X\to\psi_x\in\H$ and the set $$
	\D(C_\psi)=\left \{f\in\H: \int_X |\ip{f}{\psi_x}|^2d\mu(x)<\infty\right \}
.$$ The  operator $C_\psi:h\in\D(C_\psi)\subset\H\to \ip{h}{\psi_x}\in L^2(X,\mu)$  (strongly) defined, for every $h\in\D(C_\psi)$ and for every $x\in X$, by \begin{equation}\label{defn: C operator}(C_\psi h)(x)=\ip{h}{\psi_x}\end{equation} is called {\it the analysis operator of the function} $\psi$ (borrowing the terminology from frame theory).

\berem In general the domain of $C_\psi $ is not dense, hence $C_\psi ^*$ is not well-defined. An example of function whose analysis operator is densely defined can be found in Example \ref{S subset T_Omega}, where $\D(C_\psi )=\D(\Psi)$. Moreover, a sufficient condition for $\D(C_\psi )$ to be dense in $\H$ is that $\psi_x\in\D(C_\psi ) $ for every $x\in X$ (see Lemma 2.3. \cite{AB}). \enrem

The next result will be often needed in Section \ref{sec: cont weak}. 	In contrast with \cite[Lemma 2.1]{AB} we do not suppose that $\{\psi_x\}$ is total.
\begin{prop}\label{prop: closedness C} Let $(X,\mu)$ be a measure space and $\psi:x\in X\to\psi_x\in\H$. The analysis  operator $C_\psi $  is  closed.\end{prop}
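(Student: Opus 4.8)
The plan is to show that $C_\psi$ is closed by appealing to the standard characterization: a linear operator is closed if and only if, whenever a sequence $f_n \in \D(C_\psi)$ satisfies $f_n \to f$ in $\H$ and $C_\psi f_n \to g$ in $L^2(X,\mu)$, we have $f \in \D(C_\psi)$ and $C_\psi f = g$. So first I would take such a sequence $(f_n)$ with $f_n \to f$ in $\H$-norm and $(C_\psi f_n) \to g$ in $L^2(X,\mu)$, and the goal is to identify $g$ with the function $x \mapsto \ip{f}{\psi_x}$ and verify this function lies in $L^2(X,\mu)$, i.e.\ that $f \in \D(C_\psi)$.

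The key mechanism is the interplay between the two modes of convergence. From $f_n \to f$ in $\H$, for each fixed $x \in X$ we get pointwise convergence $(C_\psi f_n)(x) = \ip{f_n}{\psi_x} \to \ip{f}{\psi_x}$, simply by continuity of the inner product. On the other hand, $L^2$-convergence $C_\psi f_n \to g$ gives, by the standard fact that an $L^2$-convergent sequence has a subsequence converging $\mu$-almost everywhere, a subsequence $(f_{n_k})$ with $(C_\psi f_{n_k})(x) \to g(x)$ for $\mu$-a.e.\ $x$. Comparing the two limits along this subsequence forces $g(x) = \ip{f}{\psi_x}$ for $\mu$-a.e.\ $x$. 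Since $g \in L^2(X,\mu)$ by hypothesis, the function $x \mapsto \ip{f}{\psi_x}$ is (equal a.e.\ to) an $L^2$ function, which is exactly the condition $f \in \D(C_\psi)$; and then $(C_\psi f)(x) = \ip{f}{\psi_x} = g(x)$ a.e., so $C_\psi f = g$ in $L^2(X,\mu)$.

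The main obstacle, such as it is, is the passage from $L^2$-convergence to pointwise behaviour: one cannot extract almost-everywhere convergence from the full sequence, only from a subsequence. The clean way around this is to combine the $\mu$-a.e.\ limit of the subsequence with the everywhere-valid pointwise limit coming from $\H$-convergence, since both candidate limits $g(x)$ and $\ip{f}{\psi_x}$ must agree wherever the subsequence converges pointwise. One should note that this argument uses only weak measurability of $\psi$ (so that each $x \mapsto \ip{f}{\psi_x}$ is measurable) and the definition of $\D(C_\psi)$; crucially, it does not require $\{\psi_x\}$ to be total nor $\D(C_\psi)$ to be dense, which is precisely the point emphasized in the remark preceding the statement and the improvement over \cite[Lemma 2.1]{AB}.
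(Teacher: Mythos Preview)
Your proposal is correct and follows essentially the same approach as the paper: take a sequence $f_n \to f$ in $\H$ with $C_\psi f_n \to g$ in $L^2(X,\mu)$, use continuity of the inner product to get pointwise convergence $\ip{f_n}{\psi_x} \to \ip{f}{\psi_x}$ for every $x$, invoke the standard fact (Rudin, \emph{Real and Complex Analysis}, Theorem~3.12) that $L^2$-convergence yields an a.e.-convergent subsequence, and compare the two limits along that subsequence to conclude $g(x) = \ip{f}{\psi_x}$ $\mu$-a.e. Your write-up is in fact tidier than the paper's on the subsequence point, but the argument is the same.
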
\begin{proof} Consider any sequence $\{h_n\}\subset\D(C_\psi )$ such that $h_n\stackrel{\|\cdot\|}{\to} h$  in $\H$ and $C_\psi h_n\stackrel{\|\cdot\|_2}{\to} c$ in $L^2(X,\mu)$ as $n\to\infty$; we shall prove that $h\in\D(C_\psi )$ and $C_\psi h=c$.  For every $x\in X$  the functionals $f\in \D(C_\psi )\to\ip{f}{\psi_x}\in\mathbb{C}$ are continuous hence, as $n\to\infty$, $$g_n(x)=\ip{h_n}{\psi_x}\to g(x)=\ip{h}{\psi_x},$$ in $L^2(X,\mu)$. By \cite[Theorem 3.12]{RudinRCA} there exists a subsequence $g^{(k)}_{n_l}=\ip{h_{n_l}}{\psi_x}$ such that $l\to\infty$ $g^{(k)}_{n_l}=\ip{h_{n_l}}{\psi_x}\to g=\ip{h}{\psi_x}$ a.e. on $X$.   Then for every $x\in X$ we have that for $n\to \infty$  $$(C_\psi h_n)(x)=\ip{h_n}{\psi_x}\to c(x),$$ hence $g(x)=\ip{h}{\psi_x}=c(x)$, i.e. $h\in\D(C_\psi )$ and $(C_\psi h)(x)=c(x)$, therefore $C_\psi $ is closed.
\end{proof}

If $C_\psi $ is densely defined, let us calculate its adjoint operator: let $a\in\D(C_\psi^*)$ with $\D(C_\psi^*)=\{a\in L^2(X,\mu): \exists g\in\H  \mbox{ such that} \ip{C_\psi h}{a}_2 = \ip{h}{g},\,\forall h\in\D(C_\psi )\}$

$$\ip{C_\psi^* a}{h}=\ip{a}{C_\psi h}_2=\int_Xa(x)\ip{\psi_x}{h}d\mu(x),\quad h\in\D(C_\psi )$$hence

$C_\psi^*:\D(C_\psi^*)\subset L^2(X,\mu)\to \H$ is weakly defined by:
\begin{equation*}
\ip{C_\psi^*a}{h}=\int_{X}\ip{a(x)\psi_x}{h}d\mu(x),\qquad a\in \D(C_\psi^*),\,h\in\D(C_\psi )
\end{equation*} and is called the {\it synthesis operator of the function} $\psi$ where
$$\D(C_\psi ^*):=\left \{a\in L^2(X,\mu):\int_X\ip{a(x)\psi_x}{h}d\mu(x) \mbox{ exists } \forall h\in \D(C_\psi )\right \}.$$

\berem Thus, if  $C_\psi $ is densely defined, then the synthesis operator $C_\psi^*$ is a densely defined  closed operator. \enrem

\begin{prop}[\cite{FO}]\label{prop: syntesis and analysis operator}
	The function $\psi:x\in X\to\psi_x\in\H$ is Bessel with bound $\beta>0$ if and only if the synthesis operator $C_\psi^*$
	is linear and bounded on $L^2(X,\mu)$ with $\|C_\psi^*\|_{L^2,\H}\leq\sqrt{\beta}$. Moreover, the analysis operator  $C_\psi $ is linear and bounded on  $\H$ with $\|C_\psi \|_{\H, L^2}\leq\sqrt{\beta}$. More precisely $\|C_\psi^*\|_{L^2,\H}=\|C_\psi \|_{\H, L^2}=\sup_{f\in \H,\|f\|=1}\left( \int_{X}\left|\ip{f}{\psi_x}\right|^2d\mu(x)\right) ^{1/2}\leq\sqrt{\beta}.$ 	\end{prop}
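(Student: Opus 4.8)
The plan is to run the chain of equivalences ``$\psi$ Bessel with bound $\beta$'' $\Longleftrightarrow$ ``$C_\psi$ is everywhere defined and bounded with $\|C_\psi\|_{\H,L^2}\le\sqrt\beta$'' $\Longleftrightarrow$ ``$C_\psi^*$ is everywhere defined and bounded with $\|C_\psi^*\|_{L^2,\H}\le\sqrt\beta$'', reading off the norm equalities as I go. I would dispose of the first equivalence at once from the definitions: to say that $\psi$ is Bessel with bound $\beta$ means exactly that for every $f\in\H$ the integral $\int_X|\ip{f}{\psi_x}|^2d\mu(x)$ is finite and $\le\beta\|f\|^2$. Finiteness for every $f$ is the statement $\D(C_\psi)=\H$, while the bound reads $\|C_\psi f\|_2^2\le\beta\|f\|^2$. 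Hence $\psi$ is Bessel with bound $\beta$ precisely when $C_\psi$ is everywhere defined with $\|C_\psi\|_{\H,L^2}\le\sqrt\beta$, and then
$$\|C_\psi\|_{\H,L^2}^2=\sup_{\|f\|=1}\|C_\psi f\|_2^2=\sup_{\|f\|=1}\int_X|\ip{f}{\psi_x}|^2d\mu(x),$$
which already identifies the middle member of the asserted string of equalities.

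For the forward part of the second equivalence, once $C_\psi$ is everywhere defined and bounded it is in particular densely defined, so $C_\psi^*$ is well defined, and the standard Hilbert-space theory of adjoints of bounded, everywhere-defined operators gives that $C_\psi^*$ is everywhere defined and bounded with $\|C_\psi^*\|_{L^2,\H}=\|C_\psi\|_{\H,L^2}$. Together with the previous paragraph this yields $\|C_\psi^*\|_{L^2,\H}\le\sqrt\beta$ and all three norm equalities at once.

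The substance of the proof is the converse: starting only from the assumption that $C_\psi^*$ is linear and bounded on all of $L^2(X,\mu)$ with $\|C_\psi^*\|_{L^2,\H}\le\sqrt\beta$, I must recover the Bessel estimate for \emph{every} $f\in\H$, and the difficulty is precisely that a priori I do not know $f\in\D(C_\psi)$, so I cannot invoke the adjoint identity directly. I would fix $f\in\H$, put $g(x)=\ip{f}{\psi_x}$ (measurable by the weak measurability of $\psi$, which is in force throughout), and use the weak defining relation $\ip{C_\psi^*a}{f}=\int_X a(x)\ip{\psi_x}{f}d\mu(x)$. Invoking $\sigma$-finiteness, choose an increasing sequence of finite-measure sets $Y_n\uparrow X$ and set $E_n=\{x\in Y_n:|g(x)|\le n\}$ and $a_n=g\,\chi_{E_n}$, the characteristic function cutting $g$ off to $E_n$; then $a_n\in L^2(X,\mu)$ and
$$\int_{E_n}|g(x)|^2d\mu(x)=\ip{C_\psi^*a_n}{f}\le\|C_\psi^*\|_{L^2,\H}\,\|a_n\|_2\,\|f\|=\sqrt\beta\Big(\int_{E_n}|g(x)|^2d\mu(x)\Big)^{1/2}\|f\|.$$
Cancelling the common factor gives $\int_{E_n}|g|^2d\mu\le\beta\|f\|^2$ uniformly in $n$; since $E_n\uparrow X$ and $g$ is finite everywhere, monotone convergence yields $\int_X|\ip{f}{\psi_x}|^2d\mu(x)\le\beta\|f\|^2$. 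As $f$ was arbitrary, $\psi$ is Bessel with bound $\beta$, which closes the equivalence.

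I expect this truncation step to be the only real obstacle. Boundedness of the synthesis operator is, on its face, a statement about vectors of the special form $\int_X a(x)\psi_x\,d\mu(x)$; turning it into the pointwise-in-$f$ Bessel inequality genuinely requires the $\sigma$-finiteness of $(X,\mu)$ in order to manufacture admissible test functions $a_n$ and to pass to the limit by monotone convergence. The remaining ingredients---the first equivalence and the relation $\|C_\psi^*\|_{L^2,\H}=\|C_\psi\|_{\H,L^2}$---are routine once these are in place.
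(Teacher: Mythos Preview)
The paper does not give a proof of this proposition: it is stated with a citation to \cite{FO} and no argument is supplied. So there is nothing in the paper to compare your proposal against.

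That said, your argument is correct and is essentially the standard one. The only point that deserves a word of care is the interpretation of ``$C_\psi^*$ is bounded on $L^2(X,\mu)$'' in the converse direction. In the paper $C_\psi^*$ is introduced as the Hilbert-space adjoint of $C_\psi$, which presupposes that $C_\psi$ is densely defined; but in the converse you do not yet know this. You implicitly (and correctly) read the hypothesis as: the synthesis operator, weakly defined by $\ip{C_\psi^*a}{f}=\int_X a(x)\ip{\psi_x}{f}\,d\mu(x)$ for all $f\in\H$, is everywhere defined and bounded on $L^2(X,\mu)$. This is the formulation used in \cite{FO}, and with it your truncation argument via $a_n=g\,\chi_{E_n}$ and monotone convergence goes through cleanly. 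The $\sigma$-finiteness of $(X,\mu)$, which is a standing hypothesis in the paper, is exactly what makes the exhaustion $E_n\uparrow X$ available.
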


Extending to the continuous case \cite{corso}, consider the set
$$\D(\Psi)=\left \{f\in\H: \int_X |\ip{f}{\psi_x}|^2d\mu(x)<\infty\right \}=\D(C_\psi)$$
and the mapping
$\Psi: \D(\Psi) \times\D(\Psi)\to\mathbb{C}$ defined by \begin{equation}\label{defn: Omega}\Psi(f, g) :=\int_X\ip{f}{\psi_x}\ip{\psi_x}{g}
d\mu(x).\end{equation} $\Psi$
is clearly a nonnegative symmetric sesquilinear form which is  well defined for every $f,g\in\D(\Psi)$ because of the Cauchy-Schwarz inequality. It is unbounded in general.

Moreover, since $\D(\Psi)$ is the largest domain such that $\Psi$ is defined on $\D(\Psi)\times\D(\Psi)$, it results that \begin{equation}\label{eq: Omega closed}
\Psi(f,g)=\ip{C_\psi f}{C_\psi g}_2, \quad\forall f,g\in\D(C_\psi )=\D(\Psi)
\end{equation}
where $C_\psi $ is the analysis operator defined in \eqref{defn: C operator}.  Since $C_\psi $ is a closed operator,
$\Psi$ is a closed nonnegative symmetric sesquilinear form in $\H$ (see e.g. \cite[Example 1.13]{Kato}).

Let us assume that $\D(\Psi)$ is dense in $\H$, then by Kato's first representation
theorem \cite[Theorem VI.2.1]{Kato} there exists a positive  self-adjoint  operator $S_\Psi$ {\em associated to the sesquilinear form} $\Psi$  on \begin{eqnarray}\label{eq: Omega}
\D(S_\Psi) &=&\{f\in \D(\Psi): h\to\int_X\ip{f}{\psi_x} \ip{\psi_x}{h}d\mu(x)\\\nonumber && \mbox{ is bounded on }\D(\Psi) \mbox{ w.r. to }\|\cdot\|\}
\end{eqnarray}defined  by \begin{equation}\label{def: Frame opera}S_\Psi f:=h\end{equation}  with $h$ as in \eqref{eq: Omega} ($h$ is uniquely determined because of the density of $\D(\Psi)$). The operator $S_\Psi$ is the  greatest one whose domain is contained in $\D(\Psi)$ and
for which  the following  representation holds
$$\Psi(f, g) = \ip{S_\Psi f}{g},\qquad  f \in \D(S_\Psi),\, g\in\D(\Psi).$$
The set $\D(S_\Psi)$ is dense in $\D(\Psi)$ (see  \cite[p. 279]{Kato}).

Furthermore, by Kato's second representation theorem \cite[Theorem VI.2.23]{Kato}, $\D(\Psi)=\D(S_\Psi^{1/2})$
and \begin{equation*}\label{eq: omega and T}
\Psi(f, g) = \ip{S_\Psi^{1/2}f}{S_\Psi^{1/2}g},\qquad \forall  f, g \in \D(\Psi)
\end{equation*} and comparing with \eqref{eq: Omega closed}, we obtain $S_\Psi=C_\psi ^*C_\psi =|C_\psi |^2$ on $\D(S_\Psi)$.

\bedefi
The operator $S_\Psi:\D(S_\Psi)\subset\H\to\H$ defined by \eqref{def: Frame opera} will be said the {\em generalized frame operator} of the function $\psi:x\in X\to\psi_x\in\H$. \findefi

Given $\psi:x\in X\to\psi_x\in\H$, coherently with \cite{AB}, the operator $S_\psi:\D(S_\psi)\subset\H\to\H$
weakly defined by \begin{equation*}\label{eq: frame operator}
\ip{S_\psi f}{g}=\int_X \ip{f}{\psi_x}\ip{\psi_x}{g}d\mu(x),\qquad f\in\D(S_\psi ), g\in\H
\end{equation*}where $$\D(S_\psi )=\{f\in \H:\int_X \ip{f}{\psi_x}\psi_xd\mu(x)\,\, \mbox{converges weakly in }\H\}$$ is called the {\em frame operator} of  $\psi$. It is a positive operator on its domain 
and symmetric
indeed for every $f,g\in\D(S_\psi )$
\begin{eqnarray*}
	\ip{S_\psi f}{g}&=&\int_X \ip{f}{\psi_x}\ip{\psi_x}{g}d\mu(x)=\int_X\overline{\ip{\psi_x}{f}}\overline{\ip{g}{\psi_x}}d\mu(x)\\
	&=& \overline{\int_X\ip{g}{\psi_x}\ip{\psi_x}{f}d\mu(x)}=\ip{f}{S_\psi g},
\end{eqnarray*} but non densely defined in general. 
If $\psi$ is a continuous frame for $\H$, then the frame operator $S_\psi $ is a bounded operator in $\H$, positive, invertible with bounded inverse (see e.g. \cite{AAG}).
\\

\berem The generalized frame operator $S_\Psi$ and the frame operator $S_\psi $  coincide on $\D(S_\psi )\subset\D(S_\Psi)$. If in particular $\psi$ is a continuous frame for $\H$, then $C_\psi , S_\psi $ are defined on the whole $\H$ and $C_\psi^*$ on the whole $L^2(X,\mu)$ (see also \cite{AB}) and $S_\Psi=C_\psi^*C_\psi =S_\psi $ on $\H$. However,   in general, they are not the same operator,  as the following example shows.\enrem

\beex\label{S subset T_Omega} Let $X$ be such that $\mu(X)=\infty$ and having a covering made up of a countable collection $\{X_n\}_{n\in\mathbb{N}}$ of disjoint measurable subspaces of $X$ each of measure $M>0$, $\H$ a separable Hilbert space and $\{e_n\}_{n\in\mathbb{N}}$ an orthonormal basis of  $\H$.  Let $\alpha>1,\beta>0$ and define $\psi:x\in X\to\psi_x\in\H$ with
$$\psi_x:=\begin{cases} \psi_{2n-1}= n^\beta e_n, & \mbox{if } x\in X_{2n-1}\\\psi_{2n}= (n+1)^\alpha(e_{n+1}-e_n), & \mbox{if } x\in X_{2n}.
\end{cases}
$$
Then
$$\D(\Psi)=\left\{f\in\H: M\left(\sum_{n=1}^\infty n^{2\beta}|\ip{f}{e_n}|^2+\sum_{n=1}^\infty (n+1)^{2\alpha}|\ip{f}{e_{n+1}-e_n}|^2 \right)<\infty\right \}$$ is dense. Indeed, consider the sequence $\{\psi_n\}_{n\in\mathbb{N}}\subset\H$, then for every $m\in\mathbb{N}$

\begin{eqnarray*}
	\int_X\left |\ip{\psi_m}{\psi_x}\right |^2d\mu(x)&=&\sum_{k=1}^{\infty}\int_{X_k}\left|\ip{\psi_m}{\psi_k}\right|^2 d\mu(x)<\infty
\end{eqnarray*}because only two, three, or six   addendi in the series are different from zero, depending on the value of $m$. Then  $span\{\psi_x\}=span\{\psi_n\}\subset\D(\Psi)$. On the other hand $(span\{\psi_n\})^\perp\subset\D(\Psi)$, hence $$\H=\overline{span}\{\psi_n\}\oplus(\overline{span}\{\psi_n\})^\perp\subset\overline{\D(\Psi)}$$ hence $\D(\Psi)$ is dense in $\H$. 
We shall prove that there exists a $f\in\D(S_\Psi)$ such that $f\notin\D(S_\psi )$.
Let $f\in\H$ be such that $\ip{f}{e_n}=\frac{1}{n^p}$ for every $ n\in\mathbb{N}$, for a fixed $p\in\mathbb{N}$. We want to calculate for which values of $\alpha$ and $\beta$ such an $f\in\H$ is in $\D(S_\Psi)\setminus\D(S_\psi )$. For $f\in\D(\Psi)$ it has to be
\begin{equation}\label{eq: sum of series}
M\sum_{n=1}^\infty \frac{n^{2\beta}}{n^{2p}}+M\sum_{n=1}^\infty (n+1)^{2\alpha}\frac{\left |n^p-(n+1)^p\right |^2}{n^{2p}(n+1)^{2p}}<\infty.
\end{equation}
For $p>\beta+\frac{1}{2}$ the first series in \eqref{eq: sum of series}  converges, the second has general term that behaves like $\frac{1}{n^{2(p-\alpha+1)}}$ hence
if $p>\alpha-\frac{1}{2}$ too, then the series converges. 
To be $f\in\D(S_\Psi)$  the functional $g\in\D(\Psi)\to\int_X\ip{f}{\psi_x}\ip{\psi_x}{g}d\mu(x)$ has to be bounded. Take any $g\in\D(\Psi)$, then $\int_X\ip{f}{\psi_x}\ip{\psi_x}{g}d\mu(x)=M\ip{\sum_{n=1}^\infty\ip{f}{\psi_n}\psi_n}{g}$. Let us consider the sequence of partial sums  of the series $\sum_{n=1}^\infty\ip{f}{\psi_n}\psi_n$: 
\begin{eqnarray*}\label{eq: series in ex}
\nonumber s_{2m-1}&=&\sum_{n=1}^m\ip{f}{\psi_{2n-1}}\psi_{2n-1}+\sum_{n=1}^{m-1}\ip{f}{\psi_{2n}}\psi_{2n}\\	
		&=&ae_1+\sum_{n=2}^{m-1}b_n(p)e_n+c_m(p)e_m
\end{eqnarray*}
and \begin{eqnarray*}\label{eq: series in ex even term}
	\nonumber s_{2m}&=&\sum_{n=1}^m\ip{f}{\psi_{2n-1}}\psi_{2n-1}+\sum_{n=1}^{m}\ip{f}{\psi_{2n}}\psi_{2n}\\	
	&=&ae_1+\sum_{n=2}^{m}b_n(p)e_n+d_{m+1}(p)e_{m+1}
\end{eqnarray*}
with $a=\left[1+2^{2\alpha}\left(1-\frac{1}{2^p}\right) \right]>0$, $$b_n(p)=\frac{n^{2\beta}}{n^p}+\frac{n^{2\alpha}[(n-1)^p-n^p]}{n^p(n-1)^p}-
\frac{(n+1)^{2\alpha}[n^p-(n+1)^p]}{n^p(n+1)^p}
=\frac{n^{2\beta}}{n^p}+b'_n(p)$$ and 
$$c_m(p)=\frac{m^{2\beta}}{m^p}+d_m(p),\qquad d_{m+1}(p)=
\frac{(m+1)^{2\alpha}[m^p-(m+1)^p]}{m^p(m+1)^p}$$ 
where $b'_n(p)=\frac{p(p-1)}{n^{p-2\alpha+2}}+o\left(\frac{1}{n^{p-2\alpha+2}}\right)$ and  $d_m(p)=\frac{-p}{n^{p-2\alpha+1}}+o(\frac{1}{n^{p-2\alpha+1}})$. For $p>2\beta+\frac{1}{2}$ and $p>2\alpha-\frac{3}{2}$ the sequence $\{b_n(p)\}$ belongs to $\ell^2$.  Moreover, for every $g\in\D(\Psi)$ we have that $n^\beta|\ip{e_n}{g}|\to 0$, hence, if also $p\geq 2\alpha-\beta-1$, then $|c_m(p)\ip{e_m}{g}|\leq (1+p)m^\beta|\ip{e_m}{g}|\to 0$ and $|d_m(p)\ip{e_m}{g}|\leq pm^\beta|\ip{e_m}{g}|\to 0$ as $m\to\infty$.  Hence, the series $\ip{\sum_{n=1}^\infty\ip{f}{\psi_n}\psi_n}{g}$ 
converges. 
Now we want to calculate values of $\alpha$ and $\beta$ in order $f\notin\D(S_\psi )$. A vector $h\in\D(S_\psi )$ if and only if for every $g\in\H$
  $$\left |\int_X \ip{h}{\psi_x}\ip{\psi_x}{g}d\mu(x)\right |=\left |M\ip{\sum_{k=1}^{\infty}\ip{h}{\psi_k}\psi_k}{g}\right |<\infty$$ i.e. if the series $\sum_{k=1}^{\infty}\ip{h}{\psi_k}\psi_k$ weakly converges in $\H$, however, if $h=f$ and $0<2\alpha-1-p<\beta$ the norm of $s_k$ goes to infinity as $k\to\infty$.

As an example, if $p=3$ it can be   $\alpha=\frac{17}{8}$ and $\beta=\frac{1}{3}$ or, as in \cite{RC2}, $p=2$, $\alpha=\frac{8}{5}$ and $\beta=\frac{1}{2}$.
\enex

\begin{prop}\label{prop: S subset C^*C}Let $(X,\mu)$ be a $\sigma$-finite measure space, $\psi:x\in X\to\psi_x\in\H$  and $\D(\Psi)$ be dense. Then   the frame operator $S_\psi $  is closable.
\end{prop}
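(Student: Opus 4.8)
The plan is to exhibit $S_\psi$ as a restriction of the generalized frame operator $S_\Psi$, which under the density hypothesis is self-adjoint (hence closed), and then to invoke the elementary fact that every restriction of a closed operator is closable.

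First I would check the inclusion $\D(S_\psi)\subset\D(S_\Psi)$ together with $S_\Psi f=S_\psi f$ there. Let $f\in\D(S_\psi)$. Testing the weak definition of $S_\psi$ against $g=f$ gives $\int_X|\ip{f}{\psi_x}|^2\,d\mu(x)=\ip{S_\psi f}{f}<\infty$, so $f\in\D(C_\psi)=\D(\Psi)$. Moreover, for every $g\in\D(\Psi)$ one has $\int_X\ip{f}{\psi_x}\ip{\psi_x}{g}\,d\mu(x)=\ip{S_\psi f}{g}$, and as a functional of $g$ this is bounded on $\D(\Psi)$ by $\|S_\psi f\|\,\|g\|$. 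By the description of $\D(S_\Psi)$ in \eqref{eq: Omega} this is exactly the condition for $f\in\D(S_\Psi)$, and uniqueness of the representing vector forces $S_\Psi f=S_\psi f$; this is the inclusion already recorded in the remark preceding the statement.

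Next, since $\D(\Psi)$ is assumed dense, Kato's first representation theorem yields that $S_\Psi$ is a positive self-adjoint operator, and in particular it is closed. Closability of $S_\psi$ then follows immediately: if $\{f_n\}\subset\D(S_\psi)$ satisfies $f_n\to 0$ and $S_\psi f_n\to y$ in $\H$, then $f_n\in\D(S_\Psi)$ and $S_\Psi f_n=S_\psi f_n\to y$, so closedness of $S_\Psi$ gives $y=S_\Psi 0=0$. Hence the closure of the graph of $S_\psi$ contains no pair $(0,y)$ with $y\neq 0$, i.e. $S_\psi$ is closable (and in fact $\overline{S_\psi}\subset S_\Psi$).

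I do not anticipate a genuine obstacle; the only step needing care is the inclusion $\D(S_\psi)\subset\D(S_\Psi)$, which rests on the observation that weak convergence of $\int_X\ip{f}{\psi_x}\psi_x\,d\mu(x)$ already furnishes a vector representing the form $\Psi(f,\cdot)$ and hence a bounded functional on the form domain. The density of $\D(\Psi)$ is essential precisely because it is what makes $S_\Psi$ well defined and self-adjoint, giving a closed ambient operator inside which $S_\psi$ lives.
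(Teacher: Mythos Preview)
Your proposal is correct and follows essentially the same route as the paper: the paper's proof simply notes that $\Psi$ is nonnegative, closed and densely defined, invokes Kato's theorem to conclude that $S_\Psi$ is self-adjoint, and then recalls the inclusion $S_\psi\subset S_\Psi$ already observed in the preceding remark. Your argument is the same, only more explicit in that you spell out the verification of $S_\psi\subset S_\Psi$ and the elementary fact that a restriction of a closed operator is closable.
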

\begin{proof}The sesquilinear form $\Psi$ is nonnegative closed and densely defined ($\D(C_\psi )=\D(\Psi)$), hence the generalized frame  operator $S_\Psi$ is self-adjoint. We conclude the proof by recalling that  $S_\psi \subset S_\Psi$. \end{proof}

	In the following sections we will use the next two lemmas.	

\begin{lemma}\cite{BR}\label{lem: pseudoinverse unb} Let $\H,\K$ be Hilbert spaces. Let $W:\D(W)\subset\K\to\H$ a closed, densely defined operator with closed range $\mathcal{R}(W)$. Then, there exists a unique  $W^\dag\in\B(\H,\K)$  such that $$\mathcal{N}(W^\dag)=\mathcal{R}(W)^\perp,\,\, \overline{\mathcal{R}(W^\dag)}=\mathcal{N}(W)^\perp,\,\, WW^\dag f=f, \qquad f\in\mathcal{R}(W).$$
\end{lemma}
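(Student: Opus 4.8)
The plan is to construct $W^\dag$ explicitly as the inverse of a suitable \emph{reduced} operator composed with an orthogonal projection, and then to read off the three required properties. Since $\mathcal{R}(W)$ is closed, it is itself a Hilbert space and we have the orthogonal decomposition $\H=\mathcal{R}(W)\oplus\mathcal{R}(W)^\perp$; let $P$ denote the orthogonal projection of $\H$ onto $\mathcal{R}(W)$. On the domain side, since $\mathcal{N}(W)\subseteq\D(W)$ always holds, I would consider the restriction $W_0:=W\up\big(\D(W)\cap\mathcal{N}(W)^\perp\big)$. A short argument shows that $W_0$ is a bijection of $\D(W)\cap\mathcal{N}(W)^\perp$ onto $\mathcal{R}(W)$: injectivity is immediate because its kernel lies in $\mathcal{N}(W)\cap\mathcal{N}(W)^\perp=\{0\}$, while surjectivity follows by splitting any $x\in\D(W)$ as $x=x_0+x_1$ with $x_0\in\mathcal{N}(W)\subseteq\D(W)$ and $x_1\in\mathcal{N}(W)^\perp$, so that $x_1\in\D(W)\cap\mathcal{N}(W)^\perp$ and $Wx_1=Wx$.

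The analytic heart of the proof, and the step I expect to be the main obstacle, is to show that $W_0^{-1}:\mathcal{R}(W)\to\K$ is \emph{bounded}; here both hypotheses on $W$ enter decisively. First, since $\mathcal{N}(W)^\perp$ is closed and $W$ is closed, $W_0$ is a closed operator, and passing to the graph-flip shows that $W_0^{-1}$ is closed as well. Second, because $\mathcal{R}(W)$ is closed, $W_0^{-1}$ is a closed operator whose domain is the \emph{entire} Hilbert space $\mathcal{R}(W)$, so the closed graph theorem yields $W_0^{-1}\in\B(\mathcal{R}(W),\K)$. I would then set $W^\dag:=W_0^{-1}P$, which is bounded as a composition of bounded maps.

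It remains to verify the three identities. For $f\in\mathcal{R}(W)$ one has $Pf=f$ and $W^\dag f=W_0^{-1}f\in\D(W)$, whence $WW^\dag f=W_0W_0^{-1}f=f$. Since $W_0^{-1}$ is injective, $W^\dag f=0$ exactly when $Pf=0$, giving $\mathcal{N}(W^\dag)=\mathcal{N}(P)=\mathcal{R}(W)^\perp$. Finally $\mathcal{R}(W^\dag)=\D(W)\cap\mathcal{N}(W)^\perp$, and to identify its closure with $\mathcal{N}(W)^\perp$ I would invoke that $W$ is densely defined: if $Q$ denotes the orthogonal projection onto $\mathcal{N}(W)^\perp$, the splitting above shows $Q(\D(W))\subseteq\D(W)\cap\mathcal{N}(W)^\perp$, and since $Q$ is continuous and $\D(W)$ is dense in $\K$, we get $\mathcal{N}(W)^\perp=Q\big(\overline{\D(W)}\big)\subseteq\overline{\D(W)\cap\mathcal{N}(W)^\perp}$, forcing equality.

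For uniqueness, suppose $T\in\B(\H,\K)$ satisfies the same three conditions. On $\mathcal{R}(W)^\perp$ both $T$ and $W^\dag$ vanish. For $f\in\mathcal{R}(W)$, the relation $WTf=f=WW^\dag f$ gives $Tf-W^\dag f\in\mathcal{N}(W)$; on the other hand $\mathcal{R}(T)\subseteq\overline{\mathcal{R}(T)}=\mathcal{N}(W)^\perp$ and likewise $\mathcal{R}(W^\dag)\subseteq\mathcal{N}(W)^\perp$, so $Tf-W^\dag f\in\mathcal{N}(W)\cap\mathcal{N}(W)^\perp=\{0\}$. Thus $T$ and $W^\dag$ agree on $\mathcal{R}(W)\oplus\mathcal{R}(W)^\perp=\H$, proving $T=W^\dag$.
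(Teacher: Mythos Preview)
Your proof is correct and follows the standard construction of the Moore--Penrose pseudoinverse for closed densely defined operators with closed range. Note, however, that the paper does not actually supply a proof of this lemma: it is quoted from the reference \cite{BR} (Beutler--Root) and used as a black box, so there is no ``paper's own proof'' to compare against. Your argument---restricting $W$ to $\D(W)\cap\mathcal{N}(W)^\perp$, invoking the closed graph theorem on the closed Hilbert space $\mathcal{R}(W)$ to bound the inverse, and composing with the projection onto $\mathcal{R}(W)$---is exactly the classical route and all steps are sound, including the density argument for $\overline{\mathcal{R}(W^\dag)}=\mathcal{N}(W)^\perp$ and the uniqueness argument.
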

The operator $W^\dag$ is called the {\em pseudo-inverse} of the operator $W$.\\

The following lemma is a partial variation of two  Douglas majorization theorems \cite[Theorem 1, Theorem 2]{doug}, see also \cite{BC}.
		\begin{lemma}  	\label{doug gen unb}
	Let $(\H,\|\cdot \|),(\H_1,\|\cdot \|_1)$ and $(\H_2,\|\cdot \|_2)$ be Hilbert spaces and $T_1:\D(T_1)\subseteq \H_1\to \H$, $T_2:\D(T_2)\subseteq \H\to \H_2$ densely defined operators. 
	Assume that
	$T_1$ is closed and
	$\D(T_1^*)=\D(T_2)$.\\ Consider the following statements\begin{itemize}
		\item[i)]  $\|T_1^* f\|_1\leq \lambda \|T_2f\|_2$ for all $f\in \D(T_1^*)$ and some $\lambda>0$,
		\item[ii)] there exists a bounded operator $U\in \B(\H_1,\H_2)$ such that $T_1=T_2^* U$.
	\end{itemize}Then $i)\Rightarrow ii)$.
	
	If, in addition, $T_2$ is a bounded operator on $\H$, then $i)\Leftrightarrow ii)$ and both are equivalent to
	\begin{itemize}
		\item[iii)] $\mathcal{R}(T_1)\subset\mathcal{R}(T_2^*)$.
	\end{itemize}
\end{lemma}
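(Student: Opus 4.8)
The plan is to establish the core implication i) $\Rightarrow$ ii) by transplanting the operator-range construction of Douglas into the present unbounded framework, and only afterwards to recover the full chain of equivalences i) $\Leftrightarrow$ ii) $\Leftrightarrow$ iii) in the bounded case by reducing to the classical statement. First I would define a map $V$ on $\mathcal{R}(T_2)$ by setting $V(T_2 f) := T_1^* f$ for $f \in \D(T_2) = \D(T_1^*)$. Well-definedness is forced by i): if $T_2 f = T_2 f'$ then $\|T_1^*(f-f')\|_1 \le \lambda \|T_2(f-f')\|_2 = 0$, so $T_1^* f = T_1^* f'$; the same inequality gives $\|V y\|_1 \le \lambda\|y\|_2$ on $\mathcal{R}(T_2)$, whence $V$ extends by continuity to $\overline{\mathcal{R}(T_2)}$ and, setting it equal to $0$ on $\mathcal{R}(T_2)^\perp$, yields $V \in \B(\H_2,\H_1)$ with $\|V\|\le\lambda$. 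I then put $U := V^* \in \B(\H_1,\H_2)$, which is the candidate for ii).

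The heart of the argument is to verify that $T_1 = T_2^* U$, and this is the step I expect to be the main obstacle, since one must produce an equality of operators, domains included, not a mere inclusion. Passing to the adjoint of $V$ gives the identity
$$\ip{T_2 f}{U g}_2 = \ip{V T_2 f}{g}_1 = \ip{T_1^* f}{g}_1, \qquad f \in \D(T_2)=\D(T_1^*),\ g \in \H_1.$$
For fixed $g$, the functional $f \mapsto \ip{T_2 f}{U g}_2$ on $\D(T_2)$ is bounded in the norm of $\H$ exactly when $U g \in \D(T_2^*)$, and its representing vector is then $T_2^* U g$. By the displayed identity this functional coincides with $f \mapsto \ip{T_1^* f}{g}_1$, whose boundedness in $\|f\|$ is, by definition of the adjoint, equivalent to $g \in \D((T_1^*)^*) = \D(T_1^{**})$, with representing vector $T_1 g$. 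Here the hypothesis that $T_1$ is closed becomes indispensable: it gives $T_1^{**} = T_1$, so the two boundedness conditions describe one and the same set, $\D(T_2^* U) = \D(T_1)$, and on it $T_2^* U g = T_1 g$. I would stress that this construction never calls upon closedness (or any closability beyond what is granted) of $T_2$, which is why the unbounded version of i) $\Rightarrow$ ii) holds in this asymmetric generality.

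For the bounded case I would begin by observing that $T_2 \in \B(\H,\H_2)$ forces $\D(T_1^*) = \D(T_2) = \H$, so $T_1^*$ is everywhere defined and, being an adjoint, closed; the closed graph theorem then renders $T_1^*$ bounded, and hence $T_1 = T_1^{**}$ and $T_2^*$ bounded and everywhere defined as well. With every operator in sight now bounded, the three conditions are precisely the three equivalent clauses of the Douglas majorization theorems \cite{doug} applied to $A := T_1$ and $B := T_2^*$: here $A^* = T_1^*$ and $B^* = T_2^{**} = T_2$, so that i) reads $\|A^* f\|_1 \le \lambda\|B^* f\|_2$, ii) reads $A = B U$, and iii) reads $\mathcal{R}(A) \subset \mathcal{R}(B)$.

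It then suffices to invoke \cite{doug} to close the cycle; in particular the implication ii) $\Rightarrow$ iii) is in any case immediate, since $\mathcal{R}(T_1) = \mathcal{R}(T_2^* U) \subset \mathcal{R}(T_2^*)$, so the only genuinely classical input needed is the range-inclusion direction iii) $\Rightarrow$ i) (equivalently iii) $\Rightarrow$ ii)). I would therefore present the bounded part as a short reduction to Douglas's theorem, reserving the detailed work for the unbounded implication i) $\Rightarrow$ ii), where the interplay between the adjoint identity and the closedness of $T_1$ is the decisive and non-routine point.
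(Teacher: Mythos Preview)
Your argument is correct. Note, however, that the paper does not actually supply a proof of this lemma: it merely records the statement as ``a partial variation of two Douglas majorization theorems'' and refers the reader to \cite[Theorems 1 and 2]{doug} and \cite{BC}. Your construction is exactly the natural transplant of Douglas's factorization into the present unbounded setting---defining $V$ on $\mathcal{R}(T_2)$, extending by continuity and by zero, passing to $U=V^*$, and then using $T_1^{**}=T_1$ to match domains---so in spirit it coincides with what the references provide; the only difference is that you have written the details out in full, including the domain equality $\D(T_2^*U)=\D(T_1)$, which the cited sources leave implicit.
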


\section[Continuous weak $A$-frame]{Continuous weak $A$-frame and continuous atomic systems for unbounded operators}\label{sec: cont weak}
	
	In this section we introduce and study our extension to the continuous case of the notions of discrete weak $A$-frame and  discrete weak atomic system for a  densely defined operator $A$ on a Hilbert space, given in \cite{BC}.
	\bedefi \label{def_ cont weak A-Frames}
	Let $A$ be a  densely defined operator on $\H$. A {\it continuous weak $A$-frame} for $\H$ is a function $\psi:x\in X\to\psi_x\in\H$ such that for all $f\in\D(A^*)$, the map $x \to \ip{f}{\psi_x}$ is a measurable function on $X$ and
	\begin{equation*}
	\alpha \|A^* f\|^2\leq\int_X
	|\ip{f}{\psi_x }|^2d\mu(x)<\infty,
	\end{equation*}for every $f\in\D(A^*)$ and some $\alpha>0$.
	\findefi

	\berem If $X=\mathbb{N}$ and $\mu$ is a counting measure, a continuous weak $A$-frame clearly reduces to a discrete weak $A$-frame in the sense of \cite{BC}.\enrem

		\berem Let $(X,\mu)$ be a $\sigma$-finite measure space. If $A\in\B(\H)$, a continuous weak $A$-frame is  a continuous $A$-g-frame in the sense of \cite[Definition 2.1]{AROR} with $\Lambda_x=\ip{f}{\psi_x}$ for every $f\in\H$, with $x\in X$, since $C_\psi $  is a bounded operator in that case. \enrem

	\berem\label{ex: frame  to continuous weak A frame} Let $A$ be a  densely defined operator on $\H$ and   $\psi:x\in X\to\psi_x\in\D(A)\subset\H$ a continuous frame for $\H$. Then $A \psi$ is a  continuous weak $A$-frame for $\H$. Indeed, there exist constants $\alpha,\beta>0$ such that
	\begin{equation*}\alpha \|A^*f\|^2 \leq \int_X
	|\ip{A^*f}{\psi_x}|^2 d\mu(x) \leq \beta \|A^*f\|^2, \qquad\forall f\in\D(A^*).
	\end{equation*}	

	\enrem

	\beex
		Let $X = \mathbb{R}^2$ and let $\mu$ be the Lebesgue measure on
	$\mathbb{R}^2$. Let $\H = L^2(\mathbb{R})$. Let us consider the differentiation operator $Af=-if'$ with domain $H^1(\mathbb{R})$ which is a self-adjoint operator of $L^2(\mathbb{R})$ (see \cite[Section 1.3]{Shmudg}). Fix $g \in H^1(\mathbb{R})$ with $\|g\|_2 = 1$, then $\psi_g: (s,t)\in\mathbb{R}^2\to L^2(\mathbb{R})$ defined by $\psi_g(s,t)=-e^{2\pi i t\cdot}(2\pi tg(\cdot-s)+g'(\cdot-s))$ is a continuous weak $A$-frame for $L^2(\mathbb{R})$. Indeed, let $h \in H^1(\mathbb{R})\setminus\{0\}$ and consider   $\Theta_h(f)(t, s) =\int_{\mathbb{R}} f(x)\overline{h(x -s)}e^{-2\pi i tx} dx=\ip{f}{\theta_h(t, s)}_2
	$, $t, s \in \mathbb{R}$, the short-time Fourier transform of $f\in L^2(\mathbb{R})$ with respect to the window $h$, with $\theta_h(f) : \mathbb{R}^2 \to H^1(\mathbb{R})\subset
	L^2(\mathbb{R})$ defined by
	$\theta_h(t, s) =e^{2\pi i t\cdot}h(\cdot -s) 
	$, $t, s \in \mathbb{R}$,
	we have the well-known identity for any $f\in L^2(\mathbb{R})$ (see \cite[Proposition 11.1.2]{ole})
	$$\int_{\mathbb{R}}\int_{\mathbb{R}}
		| \ip{f}{\theta_h(s, t)}
	|_2^2dsdt = \| f \|_2^2\|h\|_2^2,$$
hence, if $\|h\|_2 = 1$, then $\theta_h$ is a continuous Parseval
	frame in $L^2(\mathbb{R})$ (see \cite[Example 4.3]{Bownik}). Hence, $\psi_g=A\theta_g$ is a continuous weak $A$-frame. 
	\enex
	
	\beex Let $X = \R$ and let $\mu$ be the Lebesgue measure on $\mathbb{R}^2$.
	Let  $\H = L^2(0,1)$  and let $\mathcal{I}_{(0,1)}$ be the identity of $L^2(0,1)$. Let us consider the differentiation operator $Af=-if'$ with domain $H^1(0,1)$ which is a densely defined closed operator of $L^2(0,1)$ (see \cite[Section 1.3]{Shmudg}).  The function  $\psi:t\in \R\to\psi_t\in L^2(0,1)$ with $\psi_t=2\pi te^{2\pi i t\cdot }\mathcal{I}_{(0,1)}$ is a continuous weak $A$-frame for  $L^2(0,1)$. Indeed, as proved in \cite[Example 4.2]{Bownik},	the function $\theta:t\in \R\to\theta_t\in H^1(0,1)\subset L^2(0,1)$ such that $\theta_t := e^{2\pi i t\cdot }\mathcal{I}_{(0,1)}$   is a Parseval function in $L^2(0,1)$. Hence $\psi=A	\theta$ is a continuous weak $A$-frame for  $L^2(0,1)$.
	\enex

	\begin{prop}\label{prop: frames for a product}
		Let $A$ be a densely defined operator on $\H$ and $\psi$ be a continuous weak $A$-frame for $\H$ with lower bound $\alpha>0$.
		 If $F\in\B(\H)$ is such that the domain $\D(AF)$ is dense, then $\psi$
			is a continuous weak $AF$-frame for $\H$ too, with lower bound $\alpha\|F^*\|^{-2}$.		
	\end{prop}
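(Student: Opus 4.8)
The plan is to reduce everything to a single operator identity relating $(AF)^*$ to $F^*A^*$, and then to chain the resulting norm estimate with the lower frame bound already available for $\psi$ as a continuous weak $A$-frame. Since $F\in\B(\H)$ is everywhere defined and bounded, its adjoint $F^*\in\B(\H)$ is likewise everywhere defined and bounded, so $\|F^*\|=\|F\|<\infty$ and $\|F^*\|^{-2}$ makes sense as soon as $F\neq 0$ (the constant in the statement tacitly excludes $F=0$).

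First I would record the adjoint relation. Fix $f\in\D(A^*)$. For every $g\in\D(AF)$ one has $Fg\in\D(A)$ and
\[
\ip{AFg}{f}=\ip{A(Fg)}{f}=\ip{Fg}{A^*f}=\ip{g}{F^*A^*f}.
\]
Because $\D(AF)$ is dense this shows $f\in\D((AF)^*)$ with $(AF)^*f=F^*A^*f$; in particular $\D(A^*)\subseteq\D((AF)^*)$ and $(AF)^*$ restricted to $\D(A^*)$ equals $F^*A^*$. Consequently, for such $f$,
\[
\|(AF)^*f\|^2=\|F^*A^*f\|^2\leq\|F^*\|^2\,\|A^*f\|^2,
\]
so that $\|A^*f\|^2\geq\|F^*\|^{-2}\|(AF)^*f\|^2$.

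Next I would transfer the frame estimate. The measurability of $x\mapsto\ip{f}{\psi_x}$ and the finiteness of $\int_X|\ip{f}{\psi_x}|^2\,d\mu(x)$ hold for every $f\in\D(A^*)$ by hypothesis, and combining the lower weak $A$-frame bound with the previous display gives, for all $f\in\D(A^*)$,
\[
\alpha\|F^*\|^{-2}\,\|(AF)^*f\|^2\leq\alpha\,\|A^*f\|^2\leq\int_X|\ip{f}{\psi_x}|^2\,d\mu(x)<\infty,
\]
which is exactly the continuous weak $AF$-frame inequality with lower bound $\alpha\|F^*\|^{-2}$.

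The main obstacle is the domain of $(AF)^*$. The definition of a continuous weak $AF$-frame requires the inequality (as well as measurability and finiteness) to hold for every $f\in\D((AF)^*)$, whereas the argument above only delivers them on the a priori smaller set $\D(A^*)$. The inclusion $\D(A^*)\subseteq\D((AF)^*)$ is the easy half; the delicate half is the reverse inclusion $\D((AF)^*)\subseteq\D(A^*)$, i.e.\ controlling $\ip{Au}{f}$ for all $u\in\D(A)$ knowing only that $\ip{A(Fg)}{f}$ is bounded in $g\in\D(AF)$, for which $Fg$ ranges over $\D(A)\cap\mathcal{R}(F)$ rather than over all of $\D(A)$. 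I would expect to spend most of the effort here, exploiting the boundedness of $F$ together with the density of $\D(AF)$; the situation is cleanest when $\mathcal{R}(F)$ is dense (for instance when $F$ is surjective or invertible), and this is the step where the hypotheses on $F$ must genuinely be used.
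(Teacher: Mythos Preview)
Your argument is line-for-line the paper's: both establish $F^*A^*\subseteq (AF)^*$ and then chain $\|(AF)^*f\|^2=\|F^*A^*f\|^2\le\|F^*\|^2\|A^*f\|^2$ with the given lower bound. The only place you and the paper diverge is exactly the domain question you flag: the paper does not argue the reverse inclusion $\D((AF)^*)\subseteq\D(A^*)$ at all, but simply asserts the \emph{equality} $(AF)^*=F^*A^*$ by invoking \cite[Theorem~13.2]{Rudin FA}, and then writes $\D((AF)^*)=\D(F^*A^*)=\D(A^*)$.

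Your hesitation is well placed. Rudin's Theorem~13.2 states that for densely defined $S,T,ST$ one has $T^*S^*\subset (ST)^*$, with equality when the \emph{left} factor $S$ lies in $\B(\H)$. In the product $AF$ the left factor is $A$, which is unbounded, so the cited theorem yields only the inclusion $F^*A^*\subset (AF)^*$ that you already proved, not the equality. And the reverse inclusion genuinely can fail: with $A$ unbounded self-adjoint and $F$ a finite-rank projection one gets $AF\in\B(\H)$, hence $\D((AF)^*)=\H\supsetneq\D(A^*)$, while $\D(AF)=\H$ is dense. So the paper's proof has the same gap you identified; it is just hidden behind the citation. Your instinct that an extra hypothesis on $F$ (such as dense range, or surjectivity) is needed to force $\D((AF)^*)=\D(A^*)$ is correct.
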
\begin{proof}By hypothesis there exists $\alpha>0$  such that for every $f\in \D(A^*)$
	\begin{equation*} \alpha\|A^*f\|^2\leq\int_X
	|\ip{f}{\psi_x}|^2 d\mu(x) <\infty.
	\end{equation*} The adjoint $(AF)^*$ is well defined and  $F^*A^*=(AF)^*$ by \cite[Theorem 13.2]{Rudin FA}.
		
		Hence, for every $h\in\D((AF)^*)=\D(F^*A^*)$
		\begin{eqnarray*} \|(AF)^*h\|^2&=&\|F^*A^*h\|^2 \leq \|F^*\|^2\|A^*h\|^2\\&\leq&\frac{1}{\alpha}\|F^*\|^2\int_X
			|\ip{h}{\psi_x}|^2 d\mu(x)<\infty\end{eqnarray*} since $h\in\D(F^*A^*)=\D(A^*)$.
	\end{proof}

	\begin{prop}\label{prop: scales}
		Let $A$ be a self-adjoint operator  and $\psi:x\in X\to \psi_x\in\D(A)\subset\H$  a  continuous weak $A$-frame for $\H$ with lower bound $\alpha$, then $A\psi$
		is a continuous weak $A^2$-frame for $\H$ with the same lower bound $\alpha$. Moreover, if $\psi:x\in X\to \psi_x\in\bigcap_{k=1}^n\D(A^k)\subset\H$, then $A^n\psi$
		is a continuous weak $A^{n+1}$-frame for $\H$, for every fixed $n\in\mathbb{N}$, with the same lower bound $\alpha$. In particular, if  $\psi:x\in X\to \psi_x\in\bigcap_{n\in\mathbb{N}}\D(A^n)\subset\H$ is a  continuous weak $A$-frame for $\H$ with lower bound $\alpha$, then $A^n\psi$
		is a continuous weak $A^{n+1}$-frame for $\H$, for every $n\in\mathbb{N}$, with the same lower bound $\alpha$.
	\end{prop}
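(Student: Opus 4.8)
The plan is to reduce each assertion to a single application of the continuous weak $A$-frame inequality for $\psi$, after transferring the powers of $A$ across the inner product by self-adjointness. First I would record the facts needed about a self-adjoint $A$: by the spectral theorem every power $A^m$ is again self-adjoint, so $(A^m)^*=A^m$ and $\D((A^m)^*)=\D(A^m)$; moreover the domains are nested, $\D(A^{m+1})\subset\D(A^m)$, and by definition $f\in\D(A^{m+1})$ exactly when $A^mf\in\D(A)$, in which case $A(A^mf)=A^{m+1}f$.

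For the first assertion, I would fix $f\in\D((A^2)^*)=\D(A^2)$. Since $\psi_x\in\D(A)$ and $f\in\D(A^2)\subset\D(A)=\D(A^*)$, self-adjointness gives $\ip{f}{A\psi_x}=\ip{Af}{\psi_x}$; in particular $x\mapsto\ip{f}{A\psi_x}$ is measurable because $Af\in\D(A)=\D(A^*)$ and $\psi$ is a continuous weak $A$-frame. Feeding the vector $Af\in\D(A^*)$ into the defining inequality of $\psi$ then yields
$$\alpha\,\|A^*(Af)\|^2\leq\int_X|\ip{Af}{\psi_x}|^2d\mu(x)<\infty,$$
and since $A^*(Af)=A^2f=(A^2)^*f$, this is exactly the continuous weak $A^2$-frame inequality for $A\psi$ with the same bound $\alpha$.

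The \emph{moreover} part runs identically with $A$ replaced by $A^n$. For fixed $n$ and $f\in\D(A^{n+1})=\D((A^{n+1})^*)$, the hypothesis $\psi_x\in\bigcap_{k=1}^n\D(A^k)$ makes $A^n\psi_x$ meaningful, and self-adjointness of $A^n$ gives $\ip{f}{A^n\psi_x}=\ip{A^nf}{\psi_x}$ (measurability following as before, since $A^nf\in\D(A)=\D(A^*)$). Because $f\in\D(A^{n+1})$ we have $A^nf\in\D(A^*)$, so applying the weak $A$-frame inequality to $A^nf$ gives
$$\alpha\,\|A^*(A^nf)\|^2\leq\int_X|\ip{A^nf}{\psi_x}|^2d\mu(x)<\infty,$$
and $A^*(A^nf)=A^{n+1}f=(A^{n+1})^*f$ completes the step. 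The first assertion is the case $n=1$, and the \emph{in particular} statement is immediate because $\bigcap_{n\in\N}\D(A^n)\subset\bigcap_{k=1}^n\D(A^k)$ for every $n$, so the conclusion holds for all $n$ simultaneously.

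The computation itself is routine; the only point requiring care is the domain bookkeeping. At each stage one must verify that the vector fed to the weak $A$-frame inequality genuinely lies in $\D(A^*)=\D(A)$ — which is exactly what $f\in\D(A^{n+1})$ provides for $A^nf$ — and that self-adjointness of the power $A^n$ legitimises both the transfer $\ip{f}{A^n\psi_x}=\ip{A^nf}{\psi_x}$ and the identification $(A^{n+1})^*=A^{n+1}$ needed to read off the lower bound on the correct norm.
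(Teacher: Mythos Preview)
Your proof is correct and follows essentially the same approach as the paper: take $f\in\D(A^{n+1})$, transfer $A^n$ across the inner product by self-adjointness to rewrite $\ip{f}{A^n\psi_x}$ as $\ip{A^nf}{\psi_x}$, and then apply the weak $A$-frame inequality of $\psi$ to the vector $A^nf\in\D(A)=\D(A^*)$. If anything, you are slightly more explicit than the paper in checking measurability and in justifying via the spectral theorem that each power $A^m$ is again self-adjoint.
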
\begin{proof}By hypotheses $A^2$ is self-adjoint with dense domain $\D(A^2)\subset\D(A)$ and there exists $\alpha>0$  such that for every $f\in \D(A)$
		\begin{equation*}\alpha \|Af\|^2 \leq \int_X
		|\ip{f}{\psi_x}|^2 d\mu(x) <\infty.
		\end{equation*}Hence, for every $h\in\D(A^2)$
		\begin{eqnarray*} \|A^2h\|^2&=&\|A(Ah)\|^2 \leq \frac{1}{\alpha}\int_X
			|\ip{Ah}{\psi_x}|^2 d\mu(x) \\&=&\frac{1}{\alpha}\int_X
			|\ip{h}{A\psi_x}|^2 d\mu(x)<\infty\end{eqnarray*} since $Ah\in\D(A)$.\\  Fix now an arbitrary $n\in\mathbb{N}$. If $\psi:x\in X\to \psi_x\in\D(A^n)\subset\H$, then, as before, by hypotheses both $A^n$ and $A^{n+1}$ are self-adjoint with dense domain $\D(A^{n+1})\subset\D(A^n)\subset\D(A)$ and  for every $h\in\D(A^{n+1})$
		\begin{eqnarray*} \|A^{n+1}h\|^2&=&\|A(A^nh)\|^2 \leq \frac{1}{\alpha}\int_X
			|\ip{A^nh}{\psi_x}|^2 d\mu(x) \\&=&\frac{1}{\alpha}\int_X
			|\ip{h}{A^n\psi_x}|^2 d\mu(x)<\infty\end{eqnarray*} being $A^nh\in\D(A)$. The last sentence in the Theorem is now obvious.
	\end{proof}

	\bedefi Let $A$ be a densely defined operator and $\psi:x\in X\to\psi_x\in\H$, then a function $\phi:x\in X\to\phi_x\in\H$ is called a {\it  weak $A$-dual of} $\psi$  if 	$$
	\ip{A h}{u}=\int_X  \langle h | \phi_x\rangle \ip{\psi_x }{u}d\mu(x),\qquad \forall h\in \D(A), u\in \D(A^*).
	$$
	\findefi
	
	The  weak $A$-dual $\phi$ of  $\psi$ is not unique, in general. \beex
	Let us see two examples. Let $A$ be a densely defined operator on a separable Hilbert space
	$\H$.
	\begin{enumerate}
		\item[i)](See proof of Theorem \ref{thm; existence of cont weak atomic systs unb}) 	Let $(X,\mu)$ be a $\sigma$-finite measure space. 
		Let $\{X_n\}_{n\in\mathbb{N}}$ be a covering of $X$ made up of countably many measurable disjoint sets of finite measure. Without loss of generality we suppose that $\mu(X_n)>0$ for every $n\in\mathbb{N}$. Let $\{e_n\}\subset\D(A)$ be an orthonormal basis of $\H$ 
		 and consider $\psi$, with $\psi_x= \frac{Ae_n}{\sqrt{\mu(X_n)}}$, $x\in X_n, \forall n\in\mathbb{N}$,  then one can take $\phi$ with $\phi_x=\frac{e_n}{\sqrt{\mu(X_n)}}$, $x\in X_n, \forall n\in\mathbb{N}$.
		\item[ii)] If  $\psi:=A\zeta$, where $\zeta:x\in X\to\zeta_x\in\D(A)\subset\H$ is a continuous frame for $\H$, then one can take as $\phi$ any  dual frame of $\{\zeta_x\}$.
	\end{enumerate}
	\enex

\subsection{Frame-related operators of continuous weak $A$-frames}\label{subsec: frame related operators} In this subsection we  will establish some properties of the analysis, synthesis and (generalized) frame operators of a continuous weak $A$-frame with 
$A$ a densely defined operator.  A theorem of characterization for a continuous weak $A$-frame is also given. \\

Consider the sesquilinear form $\Psi$ defined in \eqref{defn: Omega}, then we can prove the following 
\begin{prop}\label{prop: Omega densely def}
	Let $A$ be a densely defined operator and $\psi$  a continuous weak $A$-frame, then $\D(A^*)\subset\D(\Psi)$. Moreover, if $A$ is closable, then $\Psi$ is densely defined.\end{prop}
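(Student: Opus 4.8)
The plan is to handle the two assertions separately, noting that the first is purely definitional and the second rests on a single standard fact from unbounded operator theory.

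First I would establish the inclusion $\D(A^*)\subset\D(\Psi)$ by unwinding the definitions. Recall that $\D(\Psi)=\D(C_\psi)$ was defined as the set of all $f\in\H$ for which $\int_X|\ip{f}{\psi_x}|^2d\mu(x)<\infty$. By the very definition of a continuous weak $A$-frame, every $f\in\D(A^*)$ satisfies $\int_X|\ip{f}{\psi_x}|^2d\mu(x)<\infty$, since this finiteness is precisely the right-hand bound in the defining inequality. Hence every $f\in\D(A^*)$ lies in $\D(\Psi)$, which is the claim.

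For the second statement, the plan is to reduce the density of $\D(\Psi)$ to the density of $\D(A^*)$. Since $A$ is densely defined, its adjoint $A^*$ is a well-defined closed operator. The one substantive ingredient I would invoke is the classical closability criterion: a densely defined operator $A$ is closable if and only if $A^*$ is densely defined. Under the closability hypothesis this yields that $\D(A^*)$ is dense in $\H$, and then the inclusion $\D(A^*)\subset\D(\Psi)$ established above forces $\D(\Psi)$ to be dense in $\H$ as well.

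I do not expect any genuine obstacle here: the only non-formal step is the equivalence between closability of $A$ and dense definiteness of $A^*$, which is standard and can be quoted from any reference on unbounded operators. Everything else is a matter of matching the finiteness appearing in the continuous weak $A$-frame inequality against the definition of $\D(\Psi)$.
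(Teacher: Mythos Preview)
Your proposal is correct and follows essentially the same approach as the paper's proof: the inclusion $\D(A^*)\subset\D(\Psi)$ is read off directly from the definitions, and the density of $\D(\Psi)$ follows from the density of $\D(A^*)$, which holds precisely because $A$ is closable. The paper's argument is just a terser version of what you wrote.
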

\begin{proof}By hypotheses and definitions  $\D(A^*)\subset\D(\Psi)$. If $A$ is closable, then $\D(A^*)$ is dense and this concludes the proof.	\end{proof} However, in general $\D(A^*)\subsetneq \D(\Psi)$.

\begin{cor} Let $A$ be a closable and densely defined operator,  $\psi$ a continuous weak $A$-frame, then  the synthesis operator $C_\psi^*$   is closed. \end{cor}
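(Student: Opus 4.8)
The plan is to reduce the statement immediately to the density of the analysis operator's domain, after which the closedness of $C_\psi^*$ is automatic. First I would observe that, by Proposition \ref{prop: Omega densely def}, the hypotheses that $A$ is densely defined and closable force $\D(\Psi)$ to be dense in $\H$. Since $\D(C_\psi)=\D(\Psi)$ by construction, the analysis operator $C_\psi$ is therefore densely defined. Concretely, closability of $A$ guarantees that $\D(A^*)$ is dense, and the continuous weak $A$-frame inequality yields $\D(A^*)\subset\D(\Psi)$, so a fortiori $\D(C_\psi)$ is dense in $\H$.

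Once $C_\psi$ is known to be densely defined, its adjoint $C_\psi^*$ is well-defined, and the adjoint of any densely defined Hilbert-space operator is automatically closed; this is exactly the fact already recorded in the remark following the computation of $C_\psi^*$. Alternatively, I could invoke Proposition \ref{prop: closedness C}, which shows that $C_\psi$ is closed with no extra hypotheses, so that $C_\psi$ densely defined and closed yields $C_\psi^*$ densely defined and closed with $C_\psi^{**}=C_\psi$.

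There is essentially no obstacle here: the corollary is a direct packaging of Proposition \ref{prop: Omega densely def} (to secure the density of the domain) with the standard fact that adjoints are closed. The only point requiring care is to confirm that the relevant dense set is $\D(C_\psi)=\D(\Psi)$ itself, and not a possibly strictly smaller set such as $\D(A^*)$; since the remark after Proposition \ref{prop: Omega densely def} warns that $\D(A^*)\subsetneq\D(\Psi)$ in general, I would phrase the density conclusion directly in terms of $\D(C_\psi)$, which is precisely what Proposition \ref{prop: Omega densely def} delivers.
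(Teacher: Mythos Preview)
Your proposal is correct and matches the paper's own argument essentially line for line: use Proposition~\ref{prop: Omega densely def} to get $\D(C_\psi)=\D(\Psi)$ dense, then conclude that the adjoint $C_\psi^*$ is closed (and, via Proposition~\ref{prop: closedness C}, also densely defined). There is nothing to add.
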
 \begin{proof} By Proposition \ref{prop: Omega densely def}, the domain $\D(C_\psi )=\D(\Psi)$ of the closed operator $C_\psi $ is dense, hence $C_\psi^*$ is closed and densely defined.\end{proof}

\berem\label{rem: sesquil form} For what has been established until now, if $A$ is closable and densely defined and $\psi$ is a continuous weak $A$-frame, by \eqref{eq: Omega closed} the sesquilinear form $\Psi$ is a {\em densely defined}, nonnegative closed form.  Then there exists the generalized frame operator $S_\Psi$ of $\psi$ defined as in \eqref{def: Frame opera} and the analysis operator $C_\psi $ is closed and densely defined. Moreover, one has
\begin{equation*}
\alpha \|A^* f\|^2\leq\int_X
|\ip{f}{\psi_x }|^2d\mu(x)=\|C_\psi  f\|_2^2=\left \|S_\Psi^\frac{1}{2}f\right \|^2, \qquad\forall f\in \D(A^*).
\end{equation*}	
	\enrem

\begin{cor} Let $(X,\mu)$ be a $\sigma$-finite measure space, $A$  a closable, densely defined operator,  $\psi$ a continuous weak $A$-frame for $\H$. Then  the generalized frame operator $S_\Psi$ of $\psi$ is self-adjoint and the frame operator $S_\psi $  is closable. \end{cor}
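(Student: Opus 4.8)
The plan is to reduce the statement to two results already established in the text: Proposition~\ref{prop: Omega densely def}, which yields density of the form domain $\D(\Psi)$ under exactly these hypotheses, and Proposition~\ref{prop: S subset C^*C}, which from that density draws both conclusions at once. So the corollary should follow essentially by combining the two, and the only genuine verification is that the present hypotheses feed correctly into Proposition~\ref{prop: S subset C^*C}.

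First I would record that, since $A$ is densely defined and closable, its adjoint $A^*$ is densely defined, so $\D(A^*)$ is dense in $\H$. By Proposition~\ref{prop: Omega densely def} one has $\D(A^*)\subset\D(\Psi)$, whence $\D(\Psi)$ is dense in $\H$ as well. This is the only place where the hypotheses of the corollary are actually exploited, and it is the crux of the argument: without closability we could not guarantee $\D(A^*)$ to be dense, and therefore could not conclude that the larger domain $\D(\Psi)$ is dense, so Kato's representation theorem could not be invoked.

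With density of $\D(\Psi)$ in hand, the form $\Psi$ is a densely defined, nonnegative, symmetric, closed sesquilinear form in $\H$, its closedness coming from \eqref{eq: Omega closed} together with the closedness of $C_\psi$ proved in Proposition~\ref{prop: closedness C}. Kato's first representation theorem then gives that the associated operator $S_\Psi$ is positive and self-adjoint, which is the first assertion. For the second, I would recall the inclusion $S_\psi\subset S_\Psi$ noted after the definition of the frame operator: since $S_\Psi$ is self-adjoint, it is in particular closed, so $S_\psi$ admits a closed extension and is therefore closable. This is exactly the content of Proposition~\ref{prop: S subset C^*C}, which already assumes density of $\D(\Psi)$ and concludes both statements, so the corollary follows immediately once the first step is carried out.

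I do not expect any genuine obstacle here: the analytic machinery (Kato's first and second representation theorems, the closedness of $C_\psi$, and the inclusion $S_\psi\subset S_\Psi$) has been developed earlier, and the single new input is the observation that the weak $A$-frame inequality forces $\D(A^*)\subset\D(\Psi)$, which combined with closability of $A$ makes $\D(\Psi)$ dense. If I wanted to spell out one subtlety, it would be precisely the implication \emph{$A$ closable and densely defined $\Rightarrow$ $\D(A^*)$ dense}, on which the applicability of Kato's theorem rests.
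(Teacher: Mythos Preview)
Your proposal is correct and follows essentially the same route as the paper: the paper's proof simply invokes Proposition~\ref{prop: Omega densely def} to get density of $\D(\Psi)$ and then Proposition~\ref{prop: S subset C^*C} (whose proof already uses the self-adjointness of $S_\Psi$ from Kato's theorem and the inclusion $S_\psi\subset S_\Psi$) to conclude. Your write-up just unpacks those two propositions more explicitly, but the logical structure is identical.
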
 \begin{proof} By Proposition \ref{prop: Omega densely def}, the domain $\D(\Psi)$ is dense, hence the thesis follows  by Proposition \ref{prop: S subset C^*C}.\end{proof}

	\begin{prop}		Let $A$ be densely defined and closable, $A^*$ injective and $\psi$ a continuous weak $A$-frame for $\H$. Then $C_\psi $ is injective on $\D(A^*)$.
	\end{prop}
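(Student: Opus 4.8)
The plan is to read off the injectivity of $C_\psi $ on $\D(A^*)$ directly from the defining lower bound of a continuous weak $A$-frame, combined with the injectivity of $A^*$. First I would observe that, by Proposition \ref{prop: Omega densely def}, one has $\D(A^*)\subset\D(\Psi)=\D(C_\psi )$, so that $C_\psi f$ is a well-defined element of $L^2(X,\mu)$ for every $f\in\D(A^*)$ and the restriction of $C_\psi $ to $\D(A^*)$ is meaningful. Recall moreover, as recorded in Remark \ref{rem: sesquil form}, the estimate
\begin{equation*}
\alpha\|A^* f\|^2\leq\int_X|\ip{f}{\psi_x}|^2\,d\mu(x)=\|C_\psi f\|_2^2,\qquad\forall f\in\D(A^*),
\end{equation*}
where $\alpha>0$ is the lower bound of the continuous weak $A$-frame.

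Then I would take an arbitrary $f\in\D(A^*)$ with $C_\psi f=0$ and argue as follows. The vanishing of $C_\psi f$ in $L^2(X,\mu)$ means precisely that $\|C_\psi f\|_2^2=0$, so the displayed estimate forces $\alpha\|A^* f\|^2\leq 0$. Since $\alpha>0$, this yields $\|A^* f\|=0$, that is $A^* f=0$; and because $A^*$ is assumed injective, we conclude $f=0$. This is exactly the injectivity of $C_\psi $ restricted to $\D(A^*)$.

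There is no genuine obstacle here: once the setup is in place, the conclusion is a one-line consequence of the lower frame bound together with the injectivity of $A^*$. The only points that require care are purely of a bookkeeping nature, namely that $\D(A^*)$ is contained in the domain of $C_\psi $ (so that the restriction is well-posed) and that the lower bound $\alpha$ is strictly positive (so that $\|C_\psi f\|_2=0$ propagates to $A^* f=0$); both are guaranteed by the standing hypotheses and by Proposition \ref{prop: Omega densely def}.
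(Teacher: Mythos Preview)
Your proof is correct and follows essentially the same approach as the paper: both arguments simply combine the lower frame inequality $\alpha\|A^*f\|^2\leq\|C_\psi f\|_2^2$ with the injectivity of $A^*$ to conclude that $C_\psi f=0$ forces $f=0$ on $\D(A^*)$. The paper's proof is just the one-line version of what you wrote out in full.
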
\begin{proof}The proof is straightforward once observed that in our hypotheses $\alpha\|A^*f\|^2\leq\|C_\psi f\|_2^2$ for every $f\in\D(A^*)$ and some $\alpha>0$.  \end{proof}

The following is a theorem of characterization for continuous weak $A$-frames.

\begin{thm}\label{thm: A=RM supset DM} Let $(X,\mu)$ be a $\sigma$-finite measure space, $A$ a closed densely defined operator and $\psi:x\in X\to \psi_x\in\H$.
	Then the following statements are equivalent.
	\begin{enumerate}
		\item[i)] $\psi$ is  a continuous weak $A$-frame for $\H$;
		\item[ii)] for every $f\in\D(A^*)$, the map $x \to \ip{f}{\psi_x}$ is a measurable function on $X$ and there exists a closed densely defined extension $R$ of $C_\psi^*$, with $\D(R^*)\supset\D(A^*)$, such that $A=RM$ for some $M\in \B(\H,L^2(X,\mu))$.
	\end{enumerate}
\end{thm}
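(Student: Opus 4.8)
The plan is to derive both implications from the Douglas-type majorization result, Lemma~\ref{doug gen unb}, applied with $T_1=A$ and with $T_2$ the analysis operator cut down to the domain of $A^*$. Throughout I would use that, $A$ being closed and hence closable, $A^*$ is densely defined; that by Proposition~\ref{prop: Omega densely def} the domain $\D(C_\psi )=\D(\Psi)$ is dense (so $C_\psi^*$ is meaningful, as the statement of $ii)$ already presupposes); and that by Proposition~\ref{prop: closedness C} the operator $C_\psi $ is closed, whence $(C_\psi^*)^*=C_\psi $.

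For $i)\Rightarrow ii)$ I would set $C_0:=C_\psi\up\D(A^*)$, a densely defined operator from $\D(A^*)\subset\H$ into $L^2(X,\mu)$, and apply Lemma~\ref{doug gen unb} with $T_1=A$ (closed, densely defined, source space $\H_1=\H$) and $T_2=C_0$ (target $\H_2=L^2(X,\mu)$); then $\D(T_1^*)=\D(A^*)=\D(T_2)$, as the lemma requires. The lower frame inequality, rewritten as in Remark~\ref{rem: sesquil form} in the form $\alpha\|A^*f\|^2\le\|C_\psi f\|_2^2$, is exactly hypothesis $i)$ of Lemma~\ref{doug gen unb} with $\lambda=\alpha^{-1/2}$. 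The lemma then yields $M\in\B(\H,L^2(X,\mu))$ with $A=T_2^*M=C_0^*M$, so I would set $R:=C_0^*$. This $R$ is closed (being an adjoint); densely defined, because $C_0\subset C_\psi $ is a restriction of a closed operator and hence closable; an extension of $C_\psi^*$, since $C_0\subset C_\psi $ forces $C_0^*\supset C_\psi^*$; and it satisfies $\D(R^*)=\D(\overline{C_0})\supset\D(C_0)=\D(A^*)$. Measurability of $x\mapsto\ip{f}{\psi_x}$ on $\D(A^*)$ is part of the frame hypothesis, so $ii)$ follows.

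For $ii)\Rightarrow i)$ I would first pass to adjoints in $A=RM$. Since $M$ is bounded and everywhere defined, the computation $\ip{RMh}{g}=\ip{h}{M^*R^*g}$ for $g\in\D(R^*)$ gives $M^*R^*\subset(RM)^*=A^*$, hence $\D(R^*)\subset\D(A^*)$ and $A^*=M^*R^*$ on $\D(R^*)$; combined with the standing assumption $\D(R^*)\supset\D(A^*)$ this yields $\D(R^*)=\D(A^*)$ with $A^*f=M^*R^*f$ there. Next, because $R$ extends $C_\psi^*$ and $C_\psi $ is closed, taking adjoints reverses the inclusion, $R^*\subset(C_\psi^*)^*=C_\psi $. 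Thus for every $f\in\D(A^*)=\D(R^*)$ one has $f\in\D(C_\psi )$, $R^*f=C_\psi f$, and in particular $\int_X|\ip{f}{\psi_x}|^2d\mu(x)=\|C_\psi f\|_2^2=\|R^*f\|^2<\infty$. Finally $\|A^*f\|=\|M^*R^*f\|\le\|M\|\,\|R^*f\|$, so $\|M\|^{-2}\|A^*f\|^2\le\int_X|\ip{f}{\psi_x}|^2d\mu(x)$; taking $\alpha=\|M\|^{-2}$ (the case $M=0$, forcing $A=0$, being trivial) shows $\psi$ is a continuous weak $A$-frame.

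The frame inequalities and norm estimates are routine; I expect the delicate points to lie entirely in the domain and adjoint bookkeeping for unbounded operators. The crux is the correct handling of the two opposite inclusions $C_0^*\supset C_\psi^*$ and $R^*\subset C_\psi $ (both resting on the closedness of $C_\psi $ from Proposition~\ref{prop: closedness C}), together with the identity $\D(R^*)=\D(A^*)$, whose two halves come from the two opposite domain inclusions — one produced by $A=RM$, the other assumed in $ii)$. Verifying that the hypotheses of Lemma~\ref{doug gen unb} ($T_1$ closed, $\D(T_1^*)=\D(T_2)$, both densely defined) are genuinely met by the restricted analysis operator $C_0$ is the step I would check most carefully.
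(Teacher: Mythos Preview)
Your proposal is correct and follows essentially the same approach as the paper: your $C_0$ is the paper's $B$, both arguments apply Lemma~\ref{doug gen unb} with $T_1=A$ and $T_2$ the restricted analysis operator, take $R$ to be its adjoint, and recover the frame inequality in the converse via $R^*\subset C_\psi$ together with the domain identity $\D(R^*)=\D(A^*)$. The only cosmetic difference is that the paper further observes in $i)\Rightarrow ii)$ that in fact $\D(R^*)=\D(A^*)$ (not merely $\supset$), concluding that $B$ itself is closed, whereas you stop at the inclusion that $ii)$ requires.
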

\begin{proof}
	$i)\Rightarrow ii)$ Consider  $B:\D(A^*)\to L^2(X,\mu)$ given by $(B f)(x) = \ip{f}{\psi_x }$, $\forall f\in\D(A^*), x\in X$ which is a restriction of the analysis operator $C_\psi $.
	Since $C_\psi $ is closed, $B$ is closable. $B$ is also densely defined since $\D(A^*)$ is dense. \\
	We apply Lemma \ref{doug gen unb} to $T_1:=A$ and $T_2:=B$ noting that $\|Bf\|_2^2=\int_X
	|\ip{f}{\psi_x }|^2d\mu(x)$. There exists $M\in \B(\H,L^2(X,\mu))$ such that    $A=B^*M$.  Then the statement is proved taking $R=B^*$, indeed $R=B^* \supseteq C_\psi^*$ and  $\D(R)\supset\D(C_\psi^*)$ is dense because $C_\psi $ is closed and densely defined. Note that we have $\D(A^*)=\D(R^*)$ indeed $\D(R^*)=\D(\overline{B})$, $$\D(A^*)\subset\D(\overline{B})=\D(M^*\overline{B})\subset\D((B^*M)^*)=\D(A^*),$$ hence in particular $B$ is closed. \\
	$ii)\Rightarrow i)$ We have $\D(A^*)=\D(R^*)$ indeed  $$\D(A^*)\subset\D(R^*)=\D(M^*R^*)\subset\D((RM)^*)=\D(A^*).$$ For every $f\in\D(A^*)=\D(R^*)$ $$\|A^* f\|^2=\|M^*R^*f\|^2\leq\|M^*\|^2\|R^*f\|^2=\|M^*\|^2\int_X|\ip{f}{\psi_x}|^2d\mu(x)<\infty$$	being $R^*\subset C_\psi $. This proves that $\psi$ is a continuous weak $A$-frame.\end{proof}

\subsection{Atomic systems for unbounded operators $A$ and their relation with $A$-frames}\label{subsec: cont atomic syst}
	
	Now we define our generalization to the continuous case and to unbounded operators of the notion of atomic system for $K$, with $K\in\B(\H)$ \cite{gavruta}. 
	
	\bedefi\label{def: continuous weak atomic system for A} Let $A$ be a  densely defined operator on $\H$. A {\it continuous weak atomic system for $A$} is a function  $\psi:x\in X\to\psi_x\in\H$ such that  for all $f\in\D(A^*)$, the map $x \to \ip{f}{\psi_x}$ is a measurable function on $X$ and
	\begin{itemize}
		\item[i)] $\int_X
		|\ip{f}{\psi_x }|^2d\mu(x)<\infty$,
		for every $f\in\D(A^*)$;
		\item[ii)] there exists $\gamma>0$ such that, for every $f\in\D(A)$, there exists $a_f\in L^2(X,\mu)$, with
		$\|a_f\|_2=\left( \int_X |a_f(x)|^2d\mu(x)\right) ^{1/2}\leq \gamma\|f\|$ and
		\begin{equation}\label{eq: continuous weak atomic system for $A$}
		\ip{Af}{u}=\int_X  a_f(x) \ip{\psi_x }{u}d\mu(x), \qquad \forall u\in \D(A^*).
		\end{equation}
	\end{itemize}
	\findefi

	\berem\label{Lebesgue integrability} If $\psi$ is a continuous weak atomic system for a densely defined operator $A$ then, for every $f\in \D(A)$ and for every $u\in \D(A^*)$ the function $g_f^u(x)= a_f(x) \ip{\psi_x }{u}$  in \eqref{eq: continuous weak atomic system for $A$} is  $\mu$-integrable. Indeed it is \emph{absolutely} integrable: fix any $f\in \D(A)$,  $u\in \D(A^*)$, then by Schwarz inequality $$\int_X |a_f(x) \ip{\psi_x }{u}| d\mu(x)\leq \|a_f\|_2 \left( \int_X  |\ip{\psi_x }{u}|^2d\mu(x)\right )^{1/2}< \infty, $$
	where the last inequality follows from both conditions in Definition \ref{def: continuous weak atomic system for A}.
	
	\enrem
	
	The next theorem guarantees the existence of continuous weak atomic systems for densely defined operators on $\H$.

		\begin{thm}\label{thm; existence of cont weak atomic systs unb}
		Let $(X,\mu)$ be a $\sigma$-finite measure space. 
		Let $\H$ be a separable Hilbert space and $A$ a densely defined operator on $\H$. Then there exists a continuous weak atomic system for $A$.
	\end{thm}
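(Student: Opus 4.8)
The plan is to give a fully explicit construction and then verify the two conditions of Definition \ref{def: continuous weak atomic system for A} by direct computation. First I would fix an orthonormal basis $\{e_n\}_{n\in\mathbb{N}}$ of $\H$ lying inside $\D(A)$: since $A$ is densely defined, $\D(A)$ is a dense subspace of the separable Hilbert space $\H$, hence it contains a countable dense subset, and applying Gram--Schmidt to that subset (skipping linearly dependent vectors) yields an orthonormal system whose span is dense, so it is an orthonormal basis of $\H$; each of its vectors is a finite linear combination of elements of $\D(A)$ and therefore lies in $\D(A)$. Next, using $\sigma$-finiteness, I would fix a covering $\{X_n\}_{n\in\mathbb{N}}$ of $X$ by pairwise disjoint measurable sets with $0<\mu(X_n)<\infty$ (merging and discarding null sets if necessary, as in the Example preceding the statement), and define
\[
\psi_x=\frac{Ae_n}{\sqrt{\mu(X_n)}},\qquad x\in X_n,\ n\in\mathbb{N}.
\]

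To check condition i), observe that for $f\in\D(A^*)$ and $x\in X_n$ one has $\ip{f}{\psi_x}=\mu(X_n)^{-1/2}\ip{f}{Ae_n}=\mu(X_n)^{-1/2}\ip{A^*f}{e_n}$, so $x\mapsto\ip{f}{\psi_x}$ is constant on each $X_n$, hence a countably-valued simple function and thus measurable. Integrating over $X=\bigsqcup_n X_n$ gives
\[
\int_X|\ip{f}{\psi_x}|^2\,d\mu(x)=\sum_{n}\frac{|\ip{A^*f}{e_n}|^2}{\mu(X_n)}\,\mu(X_n)=\sum_n|\ip{A^*f}{e_n}|^2=\|A^*f\|^2<\infty,
\]
by Parseval's identity, so i) holds (and in fact this shows $\psi$ is even a continuous weak $A$-frame with lower bound $\alpha=1$).

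For condition ii), given $f\in\D(A)$ I would set $a_f(x)=\mu(X_n)^{-1/2}\ip{f}{e_n}$ for $x\in X_n$; this is nothing but the expansion coefficient against the weak $A$-dual $\phi_x=\mu(X_n)^{-1/2}e_n$ of the Example. Then $\|a_f\|_2^2=\sum_n|\ip{f}{e_n}|^2=\|f\|^2$, so $a_f\in L^2(X,\mu)$ and $\gamma=1$ works. For $u\in\D(A^*)$ the integrand $a_f(x)\ip{\psi_x}{u}$ is absolutely integrable by Cauchy--Schwarz, using $\|a_f\|_2=\|f\|$ together with $\int_X|\ip{\psi_x}{u}|^2\,d\mu(x)=\|A^*u\|^2$ from i); this legitimizes summing the integrals over the $X_n$ term by term:
\[
\int_X a_f(x)\ip{\psi_x}{u}\,d\mu(x)=\sum_n\ip{f}{e_n}\ip{Ae_n}{u}=\sum_n\ip{f}{e_n}\ip{e_n}{A^*u}=\ip{f}{A^*u}=\ip{Af}{u},
\]
where I used $\ip{Ae_n}{u}=\ip{e_n}{A^*u}$, the expansion $f=\sum_n\ip{f}{e_n}e_n$ with continuity of the inner product, and finally the defining property of the adjoint since $f\in\D(A)$ and $u\in\D(A^*)$. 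This is precisely the reconstruction required in condition ii) of Definition \ref{def: continuous weak atomic system for A}.

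The only genuinely delicate points are securing the orthonormal basis inside $\D(A)$ (handled by the Gram--Schmidt argument) and the interchange between the series and the integral; the latter is harmless here because each $X_n$ has finite measure, the integrand is piecewise constant, and absolute integrability has been checked, so countable additivity of the integral applies directly. Everything else reduces to Parseval's identity and the definition of $A^*$.
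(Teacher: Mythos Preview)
Your proof is correct and follows essentially the same construction as the paper: choose an orthonormal basis $\{e_n\}\subset\D(A)$, a disjoint $\sigma$-finite covering $\{X_n\}$ with $0<\mu(X_n)<\infty$, set $\psi_x=\mu(X_n)^{-1/2}Ae_n$ and $a_f(x)=\mu(X_n)^{-1/2}\ip{f}{e_n}$ on $X_n$, and verify both conditions via Parseval. You even supply two justifications the paper leaves implicit, namely the Gram--Schmidt argument guaranteeing the basis lies in $\D(A)$ and the absolute-integrability check legitimizing the term-by-term summation.
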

	\begin{proof}
		
		Let $\{e_n\}_{n\in\mathbb{N}}\subset \D(A)$ be an othonormal basis for $\H$. Then, every $f\in\H$ can be written as $f=\sum_{n=1}^\infty
		\ip{f}{e_n}e_n$.
		For all $n\in\mathbb{N}$ denote with $\psi_n=Ae_n$. Let $\{X_n\}_{x\in\mathbb{N}}$ be a covering of $X$ made up of countably many measurable disjoint sets of finite measure. It is not restrictive supposing that $\mu(X_n)>0$ for every $n\in\mathbb{N}$. Then we define
 \begin{equation*}
		\psi_x:= \frac{\psi_n}{\sqrt{\mu(X_n)}}, \quad  x\in X_n,  n\in\mathbb{N}.
		\end{equation*}
		
		For every $f\in\H$ the map $x\in X \to \ip{f}{\psi_x}\in\mathbb{C}$ is measurable because it is a step function.
		
		Moreover, for every $f\in\D(A
		^*)$
		\begin{eqnarray*}
			\|A^*f\|^2&=&\sum_{n=1}^\infty
			\left|\ip{A^*f}{e_n}\right|^2=\sum_{n=1}^\infty
			\left|\ip{f}{Ae_n}\right|^2\\
			&=&\sum_{n=1}^\infty \left|\ip{f}{\psi_n}\right|^2=\sum_{n=1}^\infty\int_{X_n}
			|\ip{f}{\psi_x}|^2d\mu(x)
			\\&=&	\int_X |\ip{f}{\psi_x}|^2d\mu(x)<\infty.
		\end{eqnarray*}

		Now, for all $f\in\D(A^*)$, take $a_f$ as the step function defined as follows:
		
		\begin{equation*}
		a_f(x) :=\frac{\ip{f}{e_n}}{\sqrt{\mu(X_n)}}, \quad  x\in X_n,  n\in \mathbb{N}.
		\end{equation*}  Then, for all $f\in\D(A^*)$,   $a_f\in L^2(X,\mu)$, with
		\begin{eqnarray*}
			\|a_f\|_2^2&=& \int_X |a_f(x)|^2d\mu(x) =\sum_{n=1}^\infty\int_{X_n}
			\frac{\left |\ip{f}{e_n}\right |^2}{\mu(X_n)}d\mu(x)\\&=& \sum_{n=1}^\infty
			|\ip{f}{e_n}|^2=\|f\|^2,
		\end{eqnarray*}
			and for every $f\in\D(A)$, $u\in\D(A^*)$
		\begin{eqnarray*}\ip{Af}{u}&=&\ip{\sum_{n=1}^\infty\ip{f}{e_n}Ae_n}{u}\\
                                   &=&\sum_{n=1}^\infty\ip{f}{e_n}\ip{Ae_n}{u}\\
                                  &=&\sum_{n=1}^\infty\int_{X_n}
				\frac{\ip{f}{e_n}}{\sqrt{\mu(X_n)}}\frac{\ip{Ae_n}{u}}{\sqrt{\mu(X_n)}}d\mu(x)=\int_X a_f(x)\ip{\psi_x }{u}d\mu(x)\end{eqnarray*}
			Therefore $\psi$ is a continuous weak atomic system for $A$.
	\end{proof}

The following theorem gives a characterization of continuous weak atomic systems for $A$ and continuous weak $A$-frames.
	
	\begin{thm}
		\label{th_char_continuous weak_A_frame}
		Let $(X,\mu)$ be a $\sigma$-finite measure space,  $\psi:x\in X\to \psi_x\in$ and $A$  a closable densely defined operator. Then the following statements are equivalent.
		\begin{itemize}
			\item[i)] $\psi$ is a continuous weak atomic system for $A$;
			\item[ii)] $\psi$ is a continuous weak $A$-frame;
			\item[iii)] $\int_X
			|\ip{f}{\psi_x }|^2d\mu(x)<\infty$
			for every $f\in\D(A^*)$ and there exists a Bessel   weak $A$-dual $\phi$ of $\psi$.
		\end{itemize}
	\end{thm}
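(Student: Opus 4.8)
The plan is to organize the three statements around the pair $i)\Leftrightarrow ii)$ (which is cheap) and the implication $iii)\Rightarrow i)$ (which is immediate), so that the only real content is the production of a \emph{Bessel} weak $A$-dual, i.e. the step $ii)\Rightarrow iii)$. Throughout I write $B$ for the restriction of the analysis operator to $\D(A^*)$, $(Bf)(x)=\ip{f}{\psi_x}$, so that $\|Bf\|_2^2=\int_X|\ip{f}{\psi_x}|^2d\mu(x)$; by Proposition \ref{prop: closedness C} the operator $C_\psi$ is closed, hence $B$ is closable, and it is densely defined since $A$ is closable. Because $A^*=(\overline A)^*$, replacing $A$ by $\overline A$ changes none of the three conditions, so I may assume $A$ closed and apply Lemma \ref{doug gen unb} with $\D(T_1^*)=\D(A^*)=\D(B)=\D(T_2)$.

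For $i)\Leftrightarrow ii)$ I argue by duality in both directions. Assume $i)$: for $f\in\D(A)$ and $u\in\D(A^*)$ the reconstruction identity reads $\ip{f}{A^*u}=\ip{Af}{u}=\int_X a_f(x)\ip{\psi_x}{u}d\mu(x)=\ip{a_f}{C_\psi u}_2$, whence $|\ip{f}{A^*u}|\le\|a_f\|_2\,\|C_\psi u\|_2\le\gamma\|f\|\,\|C_\psi u\|_2$; taking the supremum over $f\in\D(A)$ with $\|f\|\le1$ and using the density of $\D(A)$ gives $\|A^*u\|\le\gamma\|C_\psi u\|_2$, that is the lower bound $\gamma^{-2}\|A^*u\|^2\le\int_X|\ip{u}{\psi_x}|^2d\mu(x)$, so $ii)$ holds. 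Conversely, assuming $ii)$, the lower frame bound is exactly the hypothesis $\|A^*u\|\le\alpha^{-1/2}\|Bu\|_2$ of Lemma \ref{doug gen unb}, which yields $M\in\B(\H,L^2(X,\mu))$ with $A=B^*M$; then $a_f:=Mf$ obeys $\|a_f\|_2\le\|M\|\,\|f\|$ and $\ip{Af}{u}=\ip{B^*Mf}{u}=\ip{Mf}{Bu}_2=\int_X(Mf)(x)\ip{\psi_x}{u}d\mu(x)$, which is $i)$.

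The implication $iii)\Rightarrow i)$ is then free: given a Bessel weak $A$-dual $\phi$ with bound $\beta$, set $a_f(x):=\ip{f}{\phi_x}=(C_\phi f)(x)$. By Proposition \ref{prop: syntesis and analysis operator} the analysis operator $C_\phi$ is bounded, so $\|a_f\|_2\le\sqrt\beta\,\|f\|$, and the defining identity of a weak $A$-dual is precisely the required reconstruction; together with the upper bound assumed in $iii)$ this gives $i)$. To close the cycle I must prove $ii)\Rightarrow iii)$, and the computation above already hands me the coefficient operator $M\in\B(\H,L^2(X,\mu))$ with $A=B^*M$ (taken as the reduced Douglas solution, so that $\mathcal{R}(M)\subseteq\overline{\mathcal{R}(B)}=\overline{\{C_\psi u:u\in\D(A^*)\}}$).

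What remains, and what I expect to be the main obstacle, is to \emph{replace the abstract operator $M$ by an honest function}: I need a weakly measurable Bessel $\phi:X\to\H$ with $(Mf)(x)=\ip{f}{\phi_x}$ $\mu$-a.e. for every $f$, which then becomes a Bessel weak $A$-dual. This is exactly where the continuous setting diverges from the discrete one of \cite{BC}: there one simply puts $\phi_n=M^*\delta_n$, legitimate because $\delta_n\in\ell^2$, whereas pointwise evaluation is unavailable in $L^2(X,\mu)$, so not every bounded operator into $L^2(X,\mu)$ is an analysis operator. My plan is to use the containment $\mathcal{R}(M)\subseteq\overline{\mathcal{R}(B)}$ together with the self-adjoint generalized frame operator $S_\Psi=C_\psi^*C_\psi$ of Remark \ref{rem: sesquil form}: the sought dual is the canonical-type object $\phi_x=A^*S_\Psi^{-1}\psi_x$, for which a direct computation using $\Psi(a,b)=\ip{S_\Psi a}{b}$ recovers $\ip{Af}{u}=\int_X\ip{f}{\phi_x}\ip{\psi_x}{u}d\mu(x)$ and gives the Bessel bound $\|C_\phi f\|_2\le\|M\|\,\|f\|$. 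Since $S_\Psi$ need not be boundedly invertible, I would make this rigorous through its pseudo-inverse via Lemma \ref{lem: pseudoinverse unb} (using $\mathcal{R}(A)\subseteq\mathcal{R}(S_\Psi^{1/2})$, which follows from the lower bound), and I would realize an honest pointwise $\phi_x$ on the $\sigma$-finite decomposition $X=\bigsqcup_n X_n$ (each $\chi_{X_n}\in L^2(X,\mu)$) by a measure-theoretic limiting argument. The delicate points are the convergence of this construction and the verification, necessarily in the weak sense only, that the resulting $\phi$ reproduces $A$.
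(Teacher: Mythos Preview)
Your arguments for $i)\Leftrightarrow ii)$ and for $iii)\Rightarrow i)$ are correct and coincide with the paper's. The divergence is entirely in $ii)\Rightarrow iii)$, where you over-anticipate the difficulty.

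The paper proceeds far more directly. Having obtained $M\in\B(\H,L^2(X,\mu))$ with $\overline A=B^*M$ from Lemma~\ref{doug gen unb} (exactly as you do), it simply invokes the Riesz representation theorem pointwise: for every $x\in X$ there is a unique $\phi_x\in\H$ with $(Mh)(x)=\ip{h}{\phi_x}$ for all $h\in\H$. The Bessel bound is then immediate,
\[
\int_X|\ip{f}{\phi_x}|^2\,d\mu(x)=\|Mf\|_2^2\le\|M\|^2\|f\|^2,
\]
and the weak $A$-dual identity follows from $\ip{Ah}{u}=\ip{B^*Mh}{u}=\ip{Mh}{Bu}_2=\int_X\ip{h}{\phi_x}\ip{\psi_x}{u}\,d\mu(x)$.

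You are right that this pointwise use of Riesz is, in principle, delicate: $(Mh)(x)$ is only defined up to $\mu$-a.e.\ equivalence, and not every bounded operator into $L^2(X,\mu)$ is the analysis operator of some $\phi$. The paper, however, does not engage with this and treats the step as routine. Your proposed workaround, by contrast, is both unnecessary from the paper's standpoint and itself incomplete: the candidate $\phi_x=A^*S_\Psi^{-1}\psi_x$ presupposes $\psi_x\in\D(S_\Psi^{-1})$ and $S_\Psi^{-1}\psi_x\in\D(A^*)$, neither of which is guaranteed; Lemma~\ref{lem: pseudoinverse unb} requires closed range, which you have not established for $S_\Psi$ or $S_\Psi^{1/2}$; and the $\sigma$-finite limiting construction is left entirely as a promissory note. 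In short, the paper takes precisely the ``apply Riesz pointwise'' shortcut you flagged as unavailable, and your alternative route, while thoughtfully motivated, does not reach the destination.
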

	\begin{proof} $i)\Rightarrow ii)$
		For every  $f\in \D(A^*)$  by the density of $\D(A^*)$ we have
		\begin{eqnarray*}\|A^* f\|&=&\sup_{h\in \H,\|h\|=1}\left|\ip{A^*
				f}{h}\right|	=\sup_{h\in \D(A),\|h\|=1}\left|\ip{A^*
				f}{h}\right|\\	&=&
			\sup_{h\in\D(A),\|h\|=1}|\ip{f}{Ah}|
			\\&=&\sup_{h\in \D(A),\|h\|=1}\left|\int_X
			\overline{a_h(x)}\ip{f}{ \psi_x }d\mu(x)\right|\\ &\leq&
			\sup_{h\in \D(A),\|h\|=1}\left(\int_X
			|a_h(x)|^2d\mu(x)\right)^{1/2}\left(\int_X |\ip{f}{
				\psi_x }|^2d\mu(x)\right)^{1/2}\\&\leq& \gamma\left(\int_X |\ip{f}{
				\psi_x }|^2d\mu(x)\right)^{1/2}<\infty,\end{eqnarray*}
		for some $\gamma>0$, the last two inequalities are due to the fact that $\psi$ is a continuous weak atomic system for $A$. \\
	$ii)\Rightarrow iii)$ Following the proof of Theorem \ref{thm: A=RM supset DM}, there exists $M\in \B(\H,L^2(X,\mu))$ such that    $\overline{A}=B^*M$, with $B:\D(A^*)\to L^2(X,\mu)$ a closable, densely defined operator which is a restriction of the analysis operator $C_\psi $. \\ 	 
	By the Riesz representation theorem, for every $x\in X$ there exists a unique vector $\phi_x\in\H$ such that $(Mh)(x)=\ip{h}{\phi_x}$ for every $h\in \H$. The function $\phi:x\in X\to\phi_x\in\H$ is Bessel. Indeed,  	\begin{eqnarray*}\int_X
		|\ip{f}{\phi_x}|^2 d\mu(x)& \leq&  \int_X |(Mf)(x)|^2d\mu(x)\\&=&\|Mf\|_2^2\leq\|M\|_{L^2}^2\|f\|^2, \qquad\forall f\in\H.
	\end{eqnarray*}	
Moreover,  for $h\in \D(A), u\in \D(A^*)=\D(B)$
\begin{eqnarray*}
\ip{A h}{u}&=&\ip{\overline{A} h}{u}=\ip{B^* M h}{u}=\ip{M h}{B^{**} u}_2\\
&=&\ip{M h}{Bu}_2=\int_X\ip{h}{\phi_x}\ip{\psi_x}{u}d\mu(x).
\end{eqnarray*}
		$iii)\Rightarrow i)$ It suffices to take $a_f:x\in X\to a_x(f)=\ip{f}{\phi_x}\in\mathbb{C}$ for all $f\in \D(A)$. Indeed $a_f\in L^2(X,\mu)$ and, for some $\gamma> 0$, we have $\int_{X} |a_x(f)|^2 d\mu(x) = \int_{X}|\ip{f}{\phi_x} |^2 d\mu(x) \leq \gamma  \|f\|^2$  since $\phi$ is a Bessel function, moreover, $\ip{Af}{u}=\int_X  a_f(x) \ip{\psi_x }{u}d\mu(x)$, for $f\in \D(A), u\in \D(A^*)$.
	\end{proof}

The proof of Theorem \ref{th_char_continuous weak_A_frame}  suggests the following \begin{prop}
	Let $\D\subset\H$ be dense,  $\psi:x\in X\to\psi_x\in\H$ be such that\begin{itemize}
		\item[i)] for every $f\in\D$, the map $x \to \ip{f}{\psi_x}$ is a measurable function on $X$
		\item[ii)] $\int_X|\ip{f}{\psi_x}|^2d\mu(x)<\infty$ for every $f\in\D$.\end{itemize}   If $M\in\B(\H, L^2(X,\mu))$ and $x\in X$ denote by $\phi_x$ the unique vector of $\H$ such that $(Mh)(x)=\ip{h}{\phi_x}$ for every $h\in \H$. Then, there exists a closed, densely defined operator $A_M$ such that $\psi$ is a continuous weak atomic system for $A_M$ and  $\phi:x\in X\to \phi_x\in\H$ is a Bessel function  which is a  weak $A_M$-dual of $\psi$.
\end{prop}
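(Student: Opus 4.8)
The plan is to produce $A_M$ as the adjoint of the natural operator attached to $M$ and to the analysis operator of $\psi$. First observe that hypothesis (ii) gives $\D\subseteq\D(C_\psi)$, so $\D(C_\psi)$ is dense and, by Proposition \ref{prop: closedness C}, $C_\psi$ is a closed densely defined operator; hence $C_\psi^*$ is closed and densely defined as well. The Bessel property of $\phi$ is immediate: for every $f\in\H$,
\begin{equation*}
\int_X|\ip{f}{\phi_x}|^2\,d\mu(x)=\int_X|(Mf)(x)|^2\,d\mu(x)=\|Mf\|_2^2\leq\|M\|^2\|f\|^2,
\end{equation*}
so $\phi$ is Bessel with bound $\|M\|^2$. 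The candidate operator is the one whose adjoint reproduces the pairing coming from $M$: set $N:=M^*C_\psi$ on $\D(N)=\D(C_\psi)$ (densely defined, since $\D(C_\psi)$ is dense) and define $A_M:=N^*$. By construction $A_M$ is closed.

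Granting for the moment that $A_M$ is densely defined and that $A_M^*=N=M^*C_\psi$ (so that, in particular, $\D(A_M^*)=\D(C_\psi)$), the remaining verifications are routine. For condition (i) of Definition \ref{def: continuous weak atomic system for A}, every $f\in\D(A_M^*)=\D(C_\psi)$ satisfies $\int_X|\ip{f}{\psi_x}|^2d\mu(x)<\infty$ by the very definition of $\D(C_\psi)$. For the decomposition I would take $a_f:=Mf$ for $f\in\D(A_M)$; then $\|a_f\|_2=\|Mf\|_2\leq\|M\|\,\|f\|$, so condition (ii) holds with $\gamma=\|M\|$. Finally, using the defining relation $\ip{C_\psi^*a}{h}=\ip{a}{C_\psi h}_2$ together with $(C_\psi u)(x)=\ip{u}{\psi_x}$, for $f\in\D(A_M)$ and $u\in\D(A_M^*)=\D(C_\psi)$ one computes
\begin{equation*}
\ip{A_M f}{u}=\ip{f}{M^*C_\psi u}=\ip{Mf}{C_\psi u}_2=\int_X (Mf)(x)\,\ip{\psi_x}{u}\,d\mu(x)=\int_X \ip{f}{\phi_x}\ip{\psi_x}{u}\,d\mu(x),
\end{equation*}
which is simultaneously the identity required in condition (ii) of Definition \ref{def: continuous weak atomic system for A} (with $a_f=Mf$) and the statement that $\phi$ is a weak $A_M$-dual of $\psi$.

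The part that carries all the weight is the parenthetical assumption above: that $A_M=N^*$ is densely defined and that $A_M^*$ equals $N$ rather than a proper extension of it. Both reduce to showing that $N=M^*C_\psi$ is closed (equivalently, since it is already densely defined, that it is closable with a closure not enlarging $\D(C_\psi)$); this is the delicate point, because $N$ is the composition of the bounded operator $M^*$ with the \emph{unbounded} closed operator $C_\psi$, and such a product need not be closed. I expect this to be the main obstacle, and I would attack it by analysing the graph of $N$ directly: if $h_n\to h$ in $\H$ with $M^*C_\psi h_n\to g$, one must promote this to convergence of $C_\psi h_n$ (to invoke closedness of $C_\psi$) before concluding $h\in\D(C_\psi)$ and $M^*C_\psi h=g$. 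When $\psi$ is Bessel this is automatic, since then $C_\psi\in\B(\H,L^2(X,\mu))$ by Proposition \ref{prop: syntesis and analysis operator}, so $N$ is bounded and everywhere defined and $A_M=C_\psi^*M\in\B(\H)$; in the genuinely unbounded case one must exploit additional structure of $M$ (for instance, suitable control of $M^*$ on $\mathcal{R}(C_\psi)$) to secure closedness and density. Once closedness of $N$ is established, $A_M:=N^*$ is automatically closed and densely defined with $A_M^*=N$, and the verifications of the previous paragraph complete the proof.
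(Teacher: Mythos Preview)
Your approach coincides with the paper's up to the step you flag as the delicate point, and your diagnosis there is accurate: the product $M^*C_\psi$ of a bounded operator with an unbounded closed one need not be closed. The paper, however, never tries to prove closedness. It only needs \emph{closability} of the analogous operator $F:=M^*B$ (where $B:=C_\psi\!\upharpoonright_{\D}$), since closability of a densely defined $F$ is precisely what makes $A_M:=F^*$ both closed and densely defined. The device you are missing is to exhibit an explicit closed extension rather than to analyse the graph directly: set $E:=B^*M$, which is closed because $B^*$ is closed and $M$ is bounded and everywhere defined; then $E^*\supset M^*B^{**}\supset M^*B=F$, so $F$ sits inside the closed operator $E^*$ and is therefore closable. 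No additional structure of $M$ and no control on $\mathcal{R}(C_\psi)$ is invoked.

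With this construction one obtains $A_M^*=\overline{F}$, not $F$ itself, so your asserted equality $\D(A_M^*)=\D(C_\psi)$ is not what the paper claims. The paper verifies the decomposition identity only for $u\in\D=\D(F)$ and $h\in\D(F^*)=\D(A_M)$, which is exactly the computation you already wrote; once you replace ``$N$ closed'' by ``$N$ closable via the inclusion $N\subset (C_\psi^*M)^*$\,'' (or, as in the paper, $F\subset(B^*M)^*$), your argument becomes the paper's.
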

\begin{proof}
	Let us consider the operator $B:\D\to L^2(X,\mu)$ defined for every $f\in\D$ by $(Bf)(x)=\ip{f}{\psi_x}$, $\forall x\in X$ which is a restriction of the analysis operator $C_\psi $. Since $B$ is densely defined, then $B^*$, the adjoint of $B$, is well defined. Now fix any    $M\in\B(\H, L^2(X,\mu))$, for every $h\in\H$ and any $x\in X$ by the Riesz representation theorem there exists a function $\phi:x\in X\to\phi_x\in\H$ such that $(Mh)(x)=\ip{h}{\phi_x}$.  By the same calculations than in Theorem \ref{th_char_continuous weak_A_frame}, $\phi$ is a Bessel function. Consider the closed operator $E=B^*M$, then $E^*\supset M^*B^{**}\supset M^*B$ and define $F=E^*_{\upharpoonright\D}=M^*B$ which is closable and densely defined. Then $\D(F^*)$ is dense and $\forall u\in \D=\D(F)$ and $\forall h\in \D(F^*)$ we have \begin{eqnarray*}
		\ip{F^* h}{u}&=&\ip{h}{Fu}=\ip{h}{M^*Bu}=\ip{M h}{Bu}_2\\
		&=&\int_X\ip{h}{\phi_x}\ip{\psi_x}{u}d\mu(x).
	\end{eqnarray*} It suffices now to take $A_M=F^*$.
\end{proof}
\medskip

If $\mathcal{R}(A)$ is weakly decomposable, then $\mathcal{R}(A^*)$ is weakly decomposable too.
	
	\begin{prop} \label{rem_adj_continuous weak}
	Let $ A$ be a densely defined operator on  $\H$, $\psi$  a continuous weak atomic system for $A$ and $\phi$ a Bessel  weak $A$-dual of $\psi$. Then, the adjoint $A^*$ of $A$ admits a weak decomposition 	and \begin{equation*}
	\label{A^*_dual}
	\ip{A^* u}{h}=\int_X \ip{u}{\psi_x }\ip{\phi_x}{h}d\mu(x),\qquad \forall u\in \D(A^*), \forall h\in\D(A).
	\end{equation*} 	\end{prop}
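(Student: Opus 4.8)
The plan is to read off the weak decomposition of $A^*$ by conjugating the weak decomposition of $A$ supplied by the weak $A$-dual $\phi$, and then invoking the defining relation of the adjoint. No genuinely new ingredient is needed: the formula for $A^*$ is the complex conjugate of the formula for $A$, once one recognizes $\ip{A^*u}{h}$ as $\overline{\ip{Ah}{u}}$.

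First I would recall that, since $\phi$ is a weak $A$-dual of $\psi$, we have for all $h\in\D(A)$ and $u\in\D(A^*)$
$$\ip{A h}{u}=\int_X \ip{h}{\phi_x}\ip{\psi_x}{u}\,d\mu(x).$$
By the very definition of the adjoint, $\ip{Ah}{u}=\ip{h}{A^*u}$ for such $h,u$, and the conjugate symmetry of the inner product then gives
$$\ip{A^*u}{h}=\overline{\ip{h}{A^*u}}=\overline{\ip{Ah}{u}}.$$

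Next I would pass the complex conjugation through the integral. This is the only point that requires care: one must know that the integrand $x\mapsto \ip{h}{\phi_x}\ip{\psi_x}{u}$ is absolutely $\mu$-integrable, so that $\overline{\int_X(\cdot)\,d\mu}=\int_X\overline{(\cdot)}\,d\mu$. Absolute integrability follows from Cauchy--Schwarz exactly as in Remark \ref{Lebesgue integrability}: one has
$$\int_X|\ip{h}{\phi_x}|\,|\ip{\psi_x}{u}|\,d\mu(x)\leq\Big(\int_X|\ip{h}{\phi_x}|^2d\mu(x)\Big)^{1/2}\Big(\int_X|\ip{\psi_x}{u}|^2d\mu(x)\Big)^{1/2},$$
where the first factor is finite because $\phi$ is a Bessel function and the second is finite because $\psi$ is a continuous weak atomic system for $A$, so $\int_X|\ip{u}{\psi_x}|^2d\mu(x)<\infty$ for every $u\in\D(A^*)$.

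Finally, conjugating inside the integral and using $\overline{\ip{h}{\phi_x}}=\ip{\phi_x}{h}$ together with $\overline{\ip{\psi_x}{u}}=\ip{u}{\psi_x}$, and then commuting the two scalar factors, I obtain
$$\ip{A^*u}{h}=\int_X\ip{u}{\psi_x}\ip{\phi_x}{h}\,d\mu(x),\qquad \forall u\in\D(A^*),\ \forall h\in\D(A),$$
which is precisely the asserted weak decomposition of $A^*$. I expect no real obstacle: the argument is essentially a single conjugation, and the sole technical ingredient is the justification that conjugation commutes with the integral, which is guaranteed by the absolute integrability established above.
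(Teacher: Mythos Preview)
Your proof is correct and follows essentially the same route as the paper: write $\ip{A^*u}{h}=\overline{\ip{Ah}{u}}$, substitute the weak $A$-dual formula, and conjugate the integrand. The only difference is that you make explicit, via Cauchy--Schwarz and Remark \ref{Lebesgue integrability}, the absolute integrability that justifies passing the conjugation through the integral, whereas the paper performs this step tacitly.
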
\begin{proof}Fix any $u\in\D(A^*)$ then, for every  $h\in \D(A)$ \begin{eqnarray*}\label{exp_A^*_continuous weak}
	\ip{A^* u}{h}&=&\ip{u}{Ah}=\int_X  \overline{\langle h | \phi_x\rangle \ip{\psi_x }{u}}d\mu(x)\\  \nonumber&=& \int_X \ip{u}{\psi_x } \langle \phi_x| h\rangle d\mu(x).
	\end{eqnarray*}
\end{proof}

\berem In the discrete case, i.e. for $X=\mathbb{N}$ and $\mu$ a
 counting measure, albeit a strong decomposition of $A$ is still not guaranteed in general, the adjoint $A^*$ admits a strong decomposition (see \cite[Remark 3.13]{BC}) in the sense that $$
 A^*f=\sum_{n=1}^\infty \langle f | \psi_n\rangle\phi_n,\quad \forall f\in \D(A^*) $$ with $\{\phi_n\}$ a Bessel weak $A$-dual of the weak $A$-frame $\{\psi_n\}$ .\enrem

	\berem Contrarily to the case in which the operator is in $\B(\H)$, given a closed densely defined operator $A$ on $\H$ and a continuous weak $A$-frame $\psi$,  a  weak $A$-dual $\phi$ of $\psi$ is not a continuous weak $A^*$-frame, in general. For example, if $A$ is unbounded and $\phi$ is also a  Bessel function,  from the inequality
	$$
	\alpha\|Af\|^2\leq \int_X |\ip{f}{\phi_x}|^2d\mu(x), \qquad \forall f\in \D(A)
	$$
	with $\alpha>0$, we obtain that  $A$ is bounded, a contradiction.  \enrem
	
We conclude this section by proving that, under suitable hypotheses, we can weakly decompose the domain of $A^*$ by means of a continuous weak $A$-frame. 
		\begin{thm}\label{thm: interchang unb}
			Let $A$ be a closed densely defined operator with $\mathcal{R}(A)=\H$ and $A^\dag$  the  pseudo-inverse  of $A$. Let $\psi$ be a continuous weak  $A$-frame and $\phi $  a Bessel  weak $A$-dual of $\psi$. Then, the function $\vartheta$  with $\vartheta_x:=(A^\dag)^* \phi_x\in\H$, for every  $x\in X$, is Bessel and  every $u\in \D(A^*)$ can be weakly decomposed as follows\begin{equation*}\label{interch u}
			\ip{f}{u}=\int_X \langle { f | \vartheta_x\rangle }\ip{\psi_x }{u} d\mu(x)\qquad \forall f \in\H, u\in \D(A^*).
		\end{equation*}
		\end{thm}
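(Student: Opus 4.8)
The plan is to exploit the pseudo-inverse $A^\dag$ furnished by Lemma \ref{lem: pseudoinverse unb}. Since $A$ is closed, densely defined and $\mathcal{R}(A)=\H$ is closed, that lemma yields a bounded operator $A^\dag\in\B(\H,\H)$ with $\mathcal{N}(A^\dag)=\mathcal{R}(A)^\perp=\{0\}$ and, crucially, $AA^\dag f=f$ for every $f\in\mathcal{R}(A)=\H$. In particular $A^\dag f\in\D(A)$ for all $f\in\H$ and $AA^\dag=\Id$ on $\H$. As $A^\dag$ is bounded, so is its adjoint $(A^\dag)^*$, and this boundedness is what I would use to control $\vartheta_x=(A^\dag)^*\phi_x$.

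First I would check that $\vartheta$ is Bessel. Let $\beta>0$ be a Bessel bound for $\phi$. For every $f\in\H$ I would write $\ip{f}{\vartheta_x}=\ip{f}{(A^\dag)^*\phi_x}=\ip{A^\dag f}{\phi_x}$, valid because $A^\dag$ is everywhere defined and bounded, and then estimate
\begin{equation*}
\int_X|\ip{f}{\vartheta_x}|^2d\mu(x)=\int_X|\ip{A^\dag f}{\phi_x}|^2d\mu(x)\leq\beta\|A^\dag f\|^2\leq\beta\|A^\dag\|^2\|f\|^2,
\end{equation*}
which shows $\vartheta$ is Bessel with bound $\beta\|A^\dag\|^2$. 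Weak measurability of $x\mapsto\ip{f}{\vartheta_x}=\ip{A^\dag f}{\phi_x}$ is inherited from that of $\phi$.

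Next I would establish the decomposition. Fix $f\in\H$ and $u\in\D(A^*)$. The right-hand integral is absolutely convergent: by Cauchy--Schwarz it is dominated by the product of $\left(\int_X|\ip{f}{\vartheta_x}|^2d\mu(x)\right)^{1/2}$, finite by the Bessel bound just obtained, and $\left(\int_X|\ip{\psi_x}{u}|^2d\mu(x)\right)^{1/2}$, finite because $\psi$ is a continuous weak $A$-frame and $u\in\D(A^*)$. Putting $h:=A^\dag f$, which lies in $\D(A)$ by the pseudo-inverse property, I would rewrite
\begin{equation*}
\int_X\ip{f}{\vartheta_x}\ip{\psi_x}{u}d\mu(x)=\int_X\ip{A^\dag f}{\phi_x}\ip{\psi_x}{u}d\mu(x)=\int_X\ip{h}{\phi_x}\ip{\psi_x}{u}d\mu(x).
\end{equation*}
Since $\phi$ is a weak $A$-dual of $\psi$ and $h\in\D(A)$, $u\in\D(A^*)$, the last integral equals $\ip{Ah}{u}=\ip{AA^\dag f}{u}=\ip{f}{u}$, where the final step uses $AA^\dag f=f$. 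This is exactly the asserted identity.

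The computation is short; the only delicate point is the domain bookkeeping, namely that $\mathcal{R}(A)=\H$ forces $A^\dag f\in\D(A)$ with $AA^\dag f=f$ for every $f\in\H$. This is precisely what permits the weak $A$-dual identity to be applied to $h=A^\dag f$ and collapses the integral to $\ip{f}{u}$. I expect the substance of the argument to be this interplay between the boundedness of $(A^\dag)^*$, used for the Bessel estimate, and the surjectivity relation $AA^\dag=\Id$, used for the decomposition.
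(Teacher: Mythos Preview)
Your proof is correct and follows essentially the same route as the paper: both arguments rest on the identity $AA^\dag=\Id$ from Lemma~\ref{lem: pseudoinverse unb} to rewrite $\ip{f}{u}=\ip{AA^\dag f}{u}$, then apply the weak $A$-dual relation to $h=A^\dag f\in\D(A)$, and use boundedness of $(A^\dag)^*$ together with the Bessel property of $\phi$ to obtain the Bessel bound for $\vartheta$. The only differences are cosmetic---you prove the Bessel property first and add explicit remarks on measurability and absolute convergence that the paper leaves implicit.
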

		\begin{proof}		
			By Lemma \ref{lem: pseudoinverse unb} there exists a unique pseudo-inverse $A^\dag\in\B(\H)$  of $A$
			such that $f = AA^\dag f$, $f \in\H$. Then,
			$$		\ip{f}{u}=\ip{AA^\dag f}{u}=\int_X \langle A^\dag f | \phi_x\rangle \ip{\psi_x }{u} d\mu(x)\qquad \forall f \in\H, u\in \D(A^*).
			$$
			Consider the adjoint $(A^\dag)^*\in\B(\H)$ of $A^\dag$ and define $\vartheta_x:=(A^\dag)^* \phi_x\in\H$, for every  $x\in X$. Then, for any $f\in\H$, we have		\begin{eqnarray*}\ip{f}{u}&=&\int_X  \langle { f | (A^\dag)^* \phi_x\rangle} \ip{\psi_x }{u}d\mu(x)\\&=&\int_X \langle { f |\vartheta_x\rangle} \ip{\psi_x }{u} d\mu(x),\qquad \forall u\in \D(A^*)
			\end{eqnarray*} and
			
			\begin{eqnarray*}\int_X |\ip{f}{\vartheta_x}|^2 d\mu(x)&=&\int_X \left |\ip{f}{(A^\dag)^* \phi_x}\right |^2d\mu(x)=
				\int_X \left |\ip{A^\dag f}{\phi_x}\right |^2d\mu(x)\\&\leq&\gamma\| A^\dag f\|^2\leq\gamma \|A^\dag \|^2\|f\|^2\end{eqnarray*} for some $\gamma>0$ since $\phi$ is Bessel and $A^\dag$ is bounded.
			Hence, $\vartheta:x\in X\to\vartheta_x\in\H$
			is a  Bessel function.
		
					\end{proof}
	\berem In the discrete case the decomposition of the domain of $\D(A^*)$ is strong \cite{BC}.\enrem

\section{Continuous atomic systems for bounded operators \\ between different Hilbert spaces}\label{sec: cont K-frames}
In this section we  introduce our second approach to the generalization of the notion of (discrete)  atomic system for $K\in\B(\H)$ and of $K$-frame in \cite{gavruta}, to unbounded operators in a Hilbert space in the continuous framework. Since  a closed densely defined operator in a Hilbert space $A:\D(A)\to\H$ can be seen as a {\em bounded} operator $A:\H_A\to \H$ between two different Hilbert spaces (with $\H_A$ the Hilbert space $\D(A)[\|\cdot\|_A]$ where $\|\cdot\|_A$ is the graph norm), 
 before introducing new notions, we 
 put the main definitions and results in \cite{AROR,gavruta} for $K\in\B(\H)$ in terms of bounded operators from a  Hilbert space into another. Later, in Section  \ref{subsec: cont atomic sys for unbounded op}, we return to the operator $A:\H_A\to \H$.\\

Let $\H$, $\J$ be two Hilbert spaces with inner products $\ip{\cdot}{\cdot}_\H,\ip{\cdot}{\cdot}_\J$  and induced norms $\|\cdot\|_\H,\|\cdot\|_\J$, respectively. We denote by $\B(\J,\H)$ the set of bounded linear operators from $\J$ into $\H$. For any $K\in \B(\J,\H)$  we denote by $K^*\in \B(\H,\J)$ its adjoint.

\begin{defn}\label{cont atom syst for K}	Let $K\in\B(\J,\H)$. The function $\psi:x\in X\to\psi_x\in\H$  is a continuous atomic system for $K$ if  for all $h\in\H$, the map $x \to \ip{h}{\psi_x}_\H$ is a measurable function on $X$ and\begin{itemize}
		\item[i)] $ \psi$ is Bessel function \item[ii)] there exists $\gamma>0$ such that for all $f\in\J$ there exists $a_f\in L^2(X,\mu)$, with
		$\|a_f\|_2=\left( \int_X |a_f(x)|^2d\mu(x)\right) ^{1/2}\leq \gamma\|f\|_\J$ and for every $g\in\H$               $$\ip{Kf}{g}_\H=\int_X a_f(x)\ip{\psi_x}{g}_\H d\mu(x).$$
	\end{itemize}
\end{defn}
If $\J=\H$ and $\mu$ is a counting measure, then the previous notion reduces to the notion of atomic system for $K\in\B(\H)$ in  \cite{gavruta}.

\beex\label{cont exist thm}Let $K\in \B(\J,\H)$.
Every continuous frame $\psi$ for $\H$ is a continuous atomic system for $K$. Indeed, if $\phi$ is a  dual frame of  $\psi$, then for every $h\in\H$
\begin{equation*}
\ip{Kf}{h}_\H=\int_X \ip{Kf}{\phi_x}_\H \ip{\psi_x}{h}_\H d\mu(x), \qquad\forall f\in \J
\end{equation*}
and Definition \ref{cont atom syst for K} is satisfied by taking $a_f(x)=\ip{Kf}{\phi_x}_\H$ for $f\in \J$.

\enex

\beex\label{ex: cont atomic syst} Let   $K\in \B(\J,\H)$  and $\xi:x\in X\to\xi_x\in\J$ a continuous frame for $\J$ with  dual  frame $\vartheta:x\in X\to\vartheta_x\in\J$, then for all $f,g\in \J$
$$\ip{f}{g}_\J=\int_X \ip{f}{\vartheta_x}_\J \ip{\xi_x}{g}_\J d\mu(x),$$  hence, for every $h\in\H$ $$ \ip{Kf}{h}_\H=\ip{f}{K^*h}_\J=\int_X \ip{f}{\vartheta_x}_\J \ip{K\xi_x}{h}_\H d\mu(x) .$$
Thus the function $\psi=K\xi$ is
a continuous atomic system for $K$, taking $a_f(x):=\ip{f}{\vartheta_x}_\J$.
\enex

In the discrete case, the decomposition of   $\mathcal{R}(K)$, the range of $K$, is strong \cite{BC}.\\

We give a result of existence of a continuous atomic system for a bounded operator.
\begin{thm}\label{thm: existence bdd}
	Let $(X,\mu)$ be a $\sigma$-finite measure space, 
	$\J$  a separable Hilbert space and $K\in\B(\J,\H)$. Then there exists a continuous atomic system for $K$.
\end{thm}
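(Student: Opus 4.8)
The plan is to mimic the construction used in the proof of Theorem~\ref{thm; existence of cont weak atomic systs unb}, replacing the role of the unbounded operator $A$ by the bounded operator $K$; the key simplification is that, since $K$ is everywhere defined and bounded, the candidate function $\psi$ will satisfy the \emph{full} Bessel condition, not merely its restriction to a domain.

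First I would exploit the separability of $\J$ to fix an orthonormal basis $\{e_n\}_{n\in\mathbb{N}}$ of $\J$ and set $\psi_n:=Ke_n\in\H$. Using the $\sigma$-finiteness of $(X,\mu)$, I would choose a covering $\{X_n\}_{n\in\mathbb{N}}$ of $X$ by pairwise disjoint measurable sets with $0<\mu(X_n)<\infty$ (as in the earlier proof, assuming positivity is not restrictive), and define $\psi_x:=\psi_n/\sqrt{\mu(X_n)}$ for $x\in X_n$. Then for each $h\in\H$ the map $x\mapsto\ip{h}{\psi_x}_\H$ is a step function, hence measurable.

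The next step is to check condition~(i), i.e.\ that $\psi$ is Bessel, and this is where the boundedness of $K$ enters decisively: for every $h\in\H$,
\[
\int_X|\ip{h}{\psi_x}_\H|^2\,d\mu(x)=\sum_{n}\int_{X_n}\frac{|\ip{h}{\psi_n}_\H|^2}{\mu(X_n)}\,d\mu(x)=\sum_{n}|\ip{h}{Ke_n}_\H|^2=\|K^*h\|_\J^2\leq\|K\|^2\|h\|_\H^2,
\]
so $\psi$ is Bessel with bound $\|K\|^2$. For condition~(ii) I would take $a_f(x):=\ip{f}{e_n}_\J/\sqrt{\mu(X_n)}$ for $x\in X_n$; the same computation gives $\|a_f\|_2^2=\sum_n|\ip{f}{e_n}_\J|^2=\|f\|_\J^2$, so one may take $\gamma=1$.

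Finally I would verify the reconstruction identity. Since $K$ is continuous and $f=\sum_n\ip{f}{e_n}_\J e_n$ in $\J$, one has $\ip{Kf}{g}_\H=\sum_n\ip{f}{e_n}_\J\ip{\psi_n}{g}_\H$ for all $g\in\H$. Because the integrand $x\mapsto a_f(x)\ip{\psi_x}{g}_\H$ is constant on each $X_n$ and the series $\sum_n|\ip{f}{e_n}_\J|\,|\ip{\psi_n}{g}_\H|$ converges (by Cauchy--Schwarz it is dominated by $\|f\|_\J\,\|K^*g\|_\J$), one may split $\int_X=\sum_n\int_{X_n}$ and recover exactly $\sum_n\ip{f}{e_n}_\J\ip{\psi_n}{g}_\H$, matching $\ip{Kf}{g}_\H$. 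The only point requiring a little care is this absolute-integrability justification for decomposing the integral over the covering; everything else is a direct transcription of the separable-basis argument already carried out in the unbounded setting, with the upper (Bessel) bound now coming for free from $\|K^*\|=\|K\|<\infty$.
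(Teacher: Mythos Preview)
Your proposal is correct and follows essentially the same approach as the paper: the paper invokes the construction of Theorem~\ref{thm; existence of cont weak atomic systs unb} verbatim (orthonormal basis $\{e_n\}$ of $\J$, $\psi_n=Ke_n$, $\psi_x=\psi_n/\sqrt{\mu(X_n)}$ on $X_n$), proves the Bessel bound exactly as you do via $\|K^*h\|_\J^2\leq\|K^*\|^2\|h\|_\H^2$, and then defers the remaining verifications to that earlier theorem ``with slight modifications''. Your write-up simply makes those modifications explicit.
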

\begin{proof} With the same notation than in Theorem \ref{thm; existence of cont weak atomic systs unb} we have  that
	\begin{eqnarray*}
		\int_X |\ip{h}{\psi_x}_\H|^2d\mu(x)&=&\sum_{n=1}^\infty\int_{X_n}
		|\ip{h}{\psi_x}_\H|^2d\mu(x)=
		\sum_{n=1}^\infty \left|\ip{h}{\psi_n}_\H\right|^2\\&=&\sum_{n=1}^\infty
		\left|\ip{h}{Ke_n}_\H\right|^2=\sum_{n=1}^\infty
		\left|\ip{K^*h}{e_n}_\J\right|^2\\&=&\|K^*h\|_\J^2\leq\|K^*\|_{\H,\J}^2\|h\|_\H^2,
	\end{eqnarray*} 
	where the last equality is due to the Parseval identity. The thesis follows from Theorem \ref{thm; existence of cont weak atomic systs unb}, with slight modifications due to the fact that $K\in\B(\J,\H)$.
\end{proof}

\bedefi \label{def: cont_K_frame} Let  $K\in\B(\J,\H)$. A function $\psi:x\in X\to\psi_x\in\H$ is called a \emph{continuous $K$-frame for $\H$} if for all $h\in\H$, the map $x \to \ip{h}{\psi_x}_\H$ is a measurable function on $X$ and  there exist $\alpha, \beta>0$  such that for every $h\in \H$
\begin{equation}\label{defn cont K frame for H}\alpha \|K^*h\|_\J^2 \leq \int_X
|\ip{h}{\psi_x}_\H|^2 d\mu(x) \leq \beta \|h\|_\H^2.
\end{equation} The constants $\alpha,\beta$ will be called frame bounds.
\findefi

It is easy to see that if $K\in\B(\J,\H)$ and $\theta$ is a continuous frame for $\J$, then $K\theta$ is a continuous $K$-frame for $\H$. Then we give the following two examples.

\beex Let $X = \R$ and let $\mu$ be the Lebesgue measure.
Let  us identify  $\J=\H = L^2(0,1)$ and let $\mathcal{I}_{(0,1)}$ be the identity of $L^2(0,1)$. Fix any $g\in C(0,1)$, the space of continuous functions on the open interval $(0,1)$  (or also $g\in L^\infty(0,1)$ the space of essentially bounded functions on $(0,1)$), and consider the self-adjoint operator $M_g\in \B(L^2(0,1))$ defined by $
M_gf = gf $ for every $f \in L^2(0,1)$. Then, $\psi_t := ge^{2\pi i t \cdot}\mathcal{I}_{(0,1)}$ is a   continuous $M_g$-frame. Indeed, as proved in \cite[Example 4.2]{Bownik},	the function $\theta:t\in \R\to\theta_t\in L^2(0,1)$ such that $\theta_t := e^{2\pi i t \cdot}\mathcal{I}_{(0,1)}$   is a Parseval function in $L^2(0,1)$, hence $\psi=M_g\theta$  is a   continuous $M_g$-frame.
\enex

\berem If   $\J=\H$ a continuous  $K$-frame $\psi$ is a continuous $K$-g-frame in the sense of \cite[Definition 2.1]{AROR} with $\Lambda_x=\ip{f}{\psi_x}$ for every $f\in\H$, with $x\in X$. If $K\in\B(\J,\H)$, $X=\mathbb{N}$ and $\mu$ is a counting measure, a continuous  $K$-frame clearly reduces to a discrete $K$-frame in the sense of \cite{BC} and, if in addition $\J=\H$, coincides with that of $K$-frame in \cite{gavruta}.\enrem

\begin{prop}\label{prop: product} Let $\H$, $\J$ and $\F$ be Hilbert spaces, $K\in\B(\J,\H)$, $E\in\B(\H,\F)$, $G\in\B(\H,\J)$ and $\psi$ be a continuous $K$-frame for $\H$, then
		\begin{itemize}
		\item[i)]  $E\psi$
		is a continuous $EK$-frame for $\F$.
		
		\item[ii)]  $\psi$
		is a continuous  $KG$-frame for $\H$ too.
	\end{itemize}
\end{prop}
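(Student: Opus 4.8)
The plan is to verify, separately for each part, the three defining conditions of a continuous $K$-frame in Definition \ref{def: cont_K_frame}, namely weak measurability together with the upper and lower bounds of \eqref{defn cont K frame for H}, working directly from the hypothesis that $\psi$ satisfies them relative to $K$. The two elementary facts I would lean on throughout are the adjoint identity $\ip{\xi}{E\psi_x}_\F=\ip{E^*\xi}{\psi_x}_\H$ and the fact that adjoints reverse products, i.e. $(EK)^*=K^*E^*$ and $(KG)^*=G^*K^*$.

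For part i), I would first observe that weak measurability of $E\psi$ is immediate, since for fixed $\xi\in\F$ the map $x\mapsto\ip{\xi}{E\psi_x}_\F=\ip{E^*\xi}{\psi_x}_\H$ is measurable because $E^*\xi\in\H$ and $\psi$ is a continuous $K$-frame. For the upper bound I would substitute this identity and apply the upper inequality for $\psi$ at the vector $E^*\xi$, together with $\|E^*\xi\|_\H\le\|E\|\,\|\xi\|_\F$, obtaining the constant $\beta\|E\|^2$. For the lower bound I would substitute the same identity and invoke the lower inequality for $\psi$ at $E^*\xi$, which produces $\alpha\|K^*E^*\xi\|_\J^2=\alpha\|(EK)^*\xi\|_\J^2$; hence the lower constant $\alpha$ carries over unchanged.

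For part ii), the function $\psi$ itself is unchanged and still maps into $\H$, so weak measurability and the upper bound (with the same $\beta$) are inherited verbatim from the hypothesis; the only genuine thing to check is the lower bound relative to $KG\in\B(\H)$. Here I would write $(KG)^*=G^*K^*$ and estimate $\|(KG)^*h\|_\H=\|G^*K^*h\|_\H\le\|G\|\,\|K^*h\|_\J$, so that $\|K^*h\|_\J^2\ge\|G\|^{-2}\|(KG)^*h\|_\H^2$; combining this with the lower inequality for $\psi$ yields the lower bound with constant $\alpha\|G\|^{-2}$.

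The computations are entirely routine, so I do not expect a genuine obstacle; the only points requiring care are bookkeeping — keeping track of which Hilbert space each vector lives in, so that the correct inner-product subscript and operator norm are used — and disposing of the degenerate cases $E=0$ in (i) and $G=0$ in (ii), where the relevant adjoint product vanishes and the lower inequality holds trivially for any positive constant.
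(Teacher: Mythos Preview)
Your proposal is correct and follows essentially the same approach as the paper. The paper's proof is purely by reference---part i) to \cite[Theorem 3.4]{AROR} and part ii) to Proposition~\ref{prop: frames for a product}---and the arguments behind those references are exactly the direct verifications via $(EK)^*=K^*E^*$ and $(KG)^*=G^*K^*$ that you have written out explicitly, including the lower-bound constant $\alpha\|G\|^{-2}$ for part ii).
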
\begin{proof}	$i)$	It is a slight modification of the proof in \cite[Theorem 3.4]{AROR}.\\
		$ii)$ It descends from Proposition \ref{prop: frames for a product} with obvious adaptations. 
\end{proof}

A natural consequence is the following (see also  \cite[Corollary 3.5]{AROR})
\begin{cor}
	Let $K\in\B(\H)$ and $\psi$ be a continuous $K$-frame for $\H$, then $\psi$ and $K^n\psi$
	are continuous $K^{n+1}$-frames for $\H$, for every integer $n\geq0$.
\end{cor}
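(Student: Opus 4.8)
The plan is to apply Proposition~\ref{prop: product} with $\J=\H$ and to iterate. I first observe that when $K\in\B(\H)$ we may take $\F=\J=\H$, so both parts of the proposition are available simultaneously with $K$ as an operator on the single Hilbert space $\H$. Part~$ii)$ with $G=K$ immediately gives that $\psi$ is a continuous $K^2$-frame whenever $\psi$ is a continuous $K$-frame, and more generally, applying part~$ii)$ repeatedly with $G=K$ (using that $K^{n}\in\B(\H)$ and that the product $K^{n+1}=K^{n}K$ stays bounded), one sees that $\psi$ is a continuous $K^{n+1}$-frame for every integer $n\geq 0$. This handles the first half of the claim.

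For the second half I would use part~$i)$ with $E=K^{n}$. Starting from the hypothesis that $\psi$ is a continuous $K$-frame for $\H$, part~$i)$ says that $K^{n}\psi$ is a continuous $(K^{n}K)$-frame, i.e.\ a continuous $K^{n+1}$-frame for $\H$, since $E=K^{n}\in\B(\H)$ and $EK=K^{n}K=K^{n+1}$. Thus both statements in the corollary follow by a single invocation of Proposition~\ref{prop: product}, taking all the Hilbert spaces equal to $\H$. The case $n=0$ is trivial since $K^{0}=\Id$ and $K^{0}\psi=\psi$ reproduces the original $K$-frame hypothesis.

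To make the iteration for the first half fully rigorous, I would argue by induction on $n$: the base case $n=0$ is the hypothesis, and the inductive step uses part~$ii)$ with $G=K$ to pass from ``$\psi$ is a continuous $K^{n}$-frame'' to ``$\psi$ is a continuous $K^{n}K=K^{n+1}$-frame.'' The only point requiring a moment's care is that each application of the proposition needs the relevant operators to be bounded everywhere-defined, which is automatic here because every power $K^{n}$ of a bounded operator $K\in\B(\H)$ is again in $\B(\H)$; in particular the domain-density hypothesis invoked in Proposition~\ref{prop: frames for a product} (on which part~$ii)$ rests) is trivially satisfied since $\D(K^{n}K)=\H$.

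I do not expect any genuine obstacle: the corollary is essentially a bookkeeping consequence of Proposition~\ref{prop: product}, and the ``hard part,'' if any, is merely the clean organization of the induction for the $K^{n+1}$-frame conclusion so that the two assertions (for $\psi$ and for $K^{n}\psi$) are each derived from the correct part of the proposition with the Hilbert spaces specialized to $\J=\H=\F$.
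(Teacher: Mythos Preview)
Your proposal is correct and is precisely the argument the paper has in mind: the corollary is stated there as ``a natural consequence'' of Proposition~\ref{prop: product} (with a reference to \cite[Corollary 3.5]{AROR}), and your specialization $\J=\F=\H$ together with the induction via part~$ii)$ and the single application of part~$i)$ with $E=K^n$ is exactly how one unpacks that remark.
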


Let us give a characterization of  continuous atomic systems for operators in $\B(\J,\H)$.

\begin{thm}\label{thm: char cont atomic system}
	Let $\psi:x\in X\to\psi_x\in\H$ and  $K\in\B(\J,\H)$. Then the following are equivalent.
	\begin{itemize}
		\item[i)] $\psi$ is a continuous atomic system for $K$;
		\item[ii)] $\psi$ is a continuous $K$-frame for $\H$; 
		\item[iii)] $\psi$ is a Bessel function and there exists a Bessel function $\phi:X\to\J$   such that
		\begin{equation}\label{exp_K}
		\ip{Kf}{h}_\H=\int_X\ip{f}{\phi_x}_\J\ip{\psi_x}{h}_\H d\mu(x)\qquad \forall f\in \J, \forall h\in\H.
		\end{equation}
	\end{itemize}
\end{thm}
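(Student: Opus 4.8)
The plan is to prove the cyclic chain $i)\Rightarrow ii)\Rightarrow iii)\Rightarrow i)$, treating this as the everywhere-defined analogue of Theorem \ref{th_char_continuous weak_A_frame}. The whole argument should be cleaner than in the weak/unbounded case because $K\in\B(\J,\H)$ and, under the Bessel hypothesis, $C_\psi\in\B(\H,L^2(X,\mu))$ by Proposition \ref{prop: syntesis and analysis operator}, so every operator in sight is bounded and everywhere defined and no domain bookkeeping is needed. Throughout I will repeatedly use that $\int_X|\ip{h}{\psi_x}_\H|^2d\mu(x)=\|C_\psi h\|_2^2$.

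\emph{Step $i)\Rightarrow ii)$.} Condition i) of Definition \ref{cont atom syst for K} already gives measurability and the upper (Bessel) bound $\int_X|\ip{h}{\psi_x}_\H|^2d\mu(x)\le\beta\|h\|_\H^2$. For the lower bound I would start from the duality formula $\|K^*h\|_\J=\sup_{f\in\J,\|f\|_\J=1}|\ip{Kf}{h}_\H|$, insert the atomic decomposition $\ip{Kf}{h}_\H=\int_X a_f(x)\ip{\psi_x}{h}_\H\,d\mu(x)$, and apply Cauchy--Schwarz in $L^2(X,\mu)$. Using $\|a_f\|_2\le\gamma\|f\|_\J$ this yields $|\ip{Kf}{h}_\H|\le\gamma\|f\|_\J\,\|C_\psi h\|_2$, and taking the supremum over unit vectors $f$ gives $\|K^*h\|_\J\le\gamma\|C_\psi h\|_2$, i.e.\ $\gamma^{-2}\|K^*h\|_\J^2\le\int_X|\ip{h}{\psi_x}_\H|^2d\mu(x)$. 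Hence $\psi$ is a continuous $K$-frame with lower bound $\alpha=\gamma^{-2}$.

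\emph{Step $ii)\Rightarrow iii)$.} This is the crux and follows the template of the proof of Theorem \ref{th_char_continuous weak_A_frame}. The lower frame inequality reads $\sqrt{\alpha}\,\|K^*h\|_\J\le\|C_\psi h\|_2$ for all $h\in\H$, so I would apply Lemma \ref{doug gen unb} with $T_1=K$ (closed and bounded, $\H_1=\J$), middle space $\H$, and $T_2=C_\psi$ ($\H_2=L^2(X,\mu)$); since $K^*$ and $C_\psi$ are everywhere defined, $\D(T_1^*)=\H=\D(T_2)$, and the hypotheses of the lemma hold with $\lambda=1/\sqrt{\alpha}$. The lemma produces $M\in\B(\J,L^2(X,\mu))$ with $K=C_\psi^*M$. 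For each $x\in X$ the functional $f\mapsto(Mf)(x)$ is bounded on $\J$, so by the Riesz representation theorem there is a unique $\phi_x\in\J$ with $(Mf)(x)=\ip{f}{\phi_x}_\J$; the resulting $\phi:x\mapsto\phi_x$ is Bessel because $\int_X|\ip{f}{\phi_x}_\J|^2d\mu(x)=\|Mf\|_2^2\le\|M\|^2\|f\|_\J^2$. Finally, expanding $\ip{Kf}{h}_\H=\ip{C_\psi^*Mf}{h}_\H=\ip{Mf}{C_\psi h}_2$ gives precisely \eqref{exp_K}.

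\emph{Step $iii)\Rightarrow i)$.} This is direct: the Bessel property of $\psi$ is condition i) of Definition \ref{cont atom syst for K}, and for condition ii) I would simply set $a_f(x):=\ip{f}{\phi_x}_\J$ for $f\in\J$. Then $\|a_f\|_2^2=\int_X|\ip{f}{\phi_x}_\J|^2d\mu(x)\le\beta_\phi\|f\|_\J^2$ (with $\beta_\phi$ the Bessel bound of $\phi$), so $\|a_f\|_2\le\gamma\|f\|_\J$ with $\gamma=\sqrt{\beta_\phi}$, while \eqref{exp_K} is exactly the required reconstruction $\ip{Kf}{g}_\H=\int_X a_f(x)\ip{\psi_x}{g}_\H d\mu(x)$. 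I expect the only genuine obstacle to lie in $ii)\Rightarrow iii)$: one must orient the adjoints correctly when feeding $K$ and $C_\psi$ into Lemma \ref{doug gen unb} to land on the factorization $K=C_\psi^*M$ (rather than its transpose), and then pass from the operator $M$ to the pointwise-defined Bessel function $\phi$ via Riesz representation in a measurably consistent way; everything else is a routine Cauchy--Schwarz or bookkeeping computation.
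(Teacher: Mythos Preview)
Your proposal is correct and follows essentially the same approach as the paper: the paper's proof is literally a one-line reference back to Theorem \ref{th_char_continuous weak_A_frame} ``with suitable adjustments,'' and you have faithfully carried out exactly those adjustments---the duality/Cauchy--Schwarz argument for $i)\Rightarrow ii)$, the Douglas-type factorization $K=C_\psi^*M$ via Lemma \ref{doug gen unb} followed by pointwise Riesz representation for $ii)\Rightarrow iii)$, and the obvious choice $a_f(x)=\ip{f}{\phi_x}_\J$ for $iii)\Rightarrow i)$. The measure-theoretic subtlety you flag (passing from $M$ to a pointwise $\phi_x$) is handled by the paper in exactly the same heuristic way in the proof of Theorem \ref{th_char_continuous weak_A_frame}, so you are not missing anything relative to the source.
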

\begin{proof} The proof follows from Theorem \ref{th_char_continuous weak_A_frame}, with suitable adjustments,
 recalling that if $\psi$ is a continuous $K$-frame for $\H$, then it is a Bessel function. 	

\end{proof}

As in the discrete case, \bedefi Let $K\in\B(\J,\H)$ and $\psi:x\in X\to\psi_x\in\H$ a continuous $K$-frame for $\H$.  A function $\phi:X\to\H$ as in \eqref{exp_K} is called a {\it  $K$-dual} of $\psi$.\findefi

\beex  In general,  a  $K$-dual  $\phi:x\in X\to\phi_x\in\J$ of a continuous $K$-frame $\psi:x\in X\to\psi_x\in\H$ is not unique. Let us see some examples. 
\begin{enumerate}
	\item[i)]If $\psi=\zeta$, where $\zeta:X\to\H$ is a continuous frame for $\H$, then one can take  $\phi=K^*\xi:X\to\J$ where $\xi:x\in X\to\xi_x\in\H$ is any  dual frame of $\zeta$.
	\item[ii)] If $\psi=K\zeta$, where $\zeta:x\in X\to\zeta_x\in\J$ is a continuous frame for $\J$, then one can take as $\phi$ any dual frame of $\zeta$.
\end{enumerate}
\enex

\berem\label{strong decomp of K^*}  Once at hand a continuous atomic system $\psi$ for $K$, a  Bessel $K$-dual  $\phi:X\to\J$ as in Theorem \ref{thm: char cont atomic system} is a continuous atomic system for $K^*$. Indeed,
\begin{eqnarray*}
	\ip{K^* h}{f}_\J &=&\ip{h}{Kf}_\H= \ \int_{X} \overline{\ip{f}{\phi_x}_\J\ip{\psi_x}{h}_\H} d\mu(x)\\&=&\int_{X} \ip{h}{\psi_x}_\H \ip{\phi_x}{f}_\J d\mu(x)  , \qquad f\in\J, h\in\H.
\end{eqnarray*}
We apply Theorem \ref{thm: char cont atomic system} to $K^*$ and $\phi$ to conclude that $\phi$ is a continuous atomic system for $K^*$.
\enrem

Following  H.G. Feichtinger and T. Werther \cite{FW}, 

\begin{defn}
	Let $\psi:x\in X\to \psi_x\in\H$ be a Bessel function and $\H_0$  a closed subspace of $\H$. The function $ \psi$ is called a continuous family of local
	atoms for $\H_0$ if there exists a family of linear functionals $\{c_x\}$ with $c_x:\H\to\mathbb{C}$ for every $x\in X$, such that  
	\begin{itemize}
		\item[i)] exists $\gamma>0$ with
		$\int_X|c_x(f)|^2d\mu(x)\leq\gamma\|f\|^2,\, \forall f\in \H_0$;
		\item[ii)] $\ip{f}{h} =	\int_{X} c_x(f) \ip{\psi_x}{h} d\mu(x),\, \forall f\in \H_0,  h\in\H$.
	\end{itemize}
\end{defn}

We will say that the pair $\{ \psi_x, c_x\}$ provides an atomic decomposition for $\H_0$ and $\gamma$ will be called an atomic bound of $\{ \psi_x\}$.

If now $K=P_{\H_0}\in\B(\H)$ is  the orthogonal projection on $\H_0$ ($P_{\H_0}=P_{\H_0}^2=P_{\H_0}^*$), a continuous $P_{\H_0}$-frame is a family of continuous local atoms for $\H_0$, similarly to  \cite[Theorem 5]{gavruta}.

\begin{cor}
	Let $\psi:x\in X\to \psi_x\in\H$ be
	a  Bessel function and $\H_0$  a closed subspace of the Hilbert space $\H$. Then the following statements are equivalent.
	\begin{itemize}
		\item[i)] $\{ \psi_x\}$
		is a family of continuous local atoms for $\H_0$;
		\item[ii)] $\psi$ is a continuous atomic system for $P_{\H_0}$;
		\item[iii)] there exists $\alpha > 0$ such that $\alpha\|P_{\H_0}f\|^2\leq\int_X |\ip{f}{\psi_x}|^2d\mu(x)$, $f\in\H$;
		\item[iv)] there exists a  Bessel function  $\phi:x\in X\to \phi_x\in\H$ such that
		$$\ip{P_{\H_0} f}{h} =\int_X \ip{f}{\phi_x}\ip{\psi_x}{h}d\mu(x),$$ for any $f,h\in\H$.\end{itemize}\end{cor}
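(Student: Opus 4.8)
The plan is to let Theorem~\ref{thm: char cont atomic system} do the heavy lifting for three of the four conditions and then attach the local-atoms statement by two short direct implications. The first thing I would record is that here $K=P_{\H_0}\in\B(\H)$ (so $\J=\H$) and that $P_{\H_0}$ is self-adjoint, whence $K^*=P_{\H_0}$ and $\|K^*f\|=\|P_{\H_0}f\|$ for all $f\in\H$. Under this identification, (ii) is verbatim condition i) of Theorem~\ref{thm: char cont atomic system}, and (iv) is its condition iii). As for (iii): since $\psi$ is Bessel by the standing hypothesis, the upper bound $\int_X|\ip{f}{\psi_x}|^2\,d\mu(x)\le\beta\|f\|^2$ holds automatically, so the single inequality in (iii) is exactly the lower frame bound, i.e. (iii) amounts precisely to ``$\psi$ is a continuous $P_{\H_0}$-frame'', which is condition ii) of that theorem. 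Hence (ii)$\Leftrightarrow$(iii)$\Leftrightarrow$(iv) is immediate, and I would only have to splice (i) into the cycle by proving (iv)$\Rightarrow$(i) and (i)$\Rightarrow$(ii).

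For (iv)$\Rightarrow$(i), given the Bessel function $\phi$ furnished by (iv) I would define the linear functionals $c_x(f):=\ip{f}{\phi_x}$, $f\in\H$. The estimate $\int_X|c_x(f)|^2\,d\mu(x)=\int_X|\ip{f}{\phi_x}|^2\,d\mu(x)\le\beta\|f\|^2$ is just the Bessel bound of $\phi$, giving part i) of the local-atoms definition; and for $f\in\H_0$ one has $P_{\H_0}f=f$, so the reconstruction in (iv) collapses to $\ip{f}{h}=\int_X c_x(f)\ip{\psi_x}{h}\,d\mu(x)$, which is part ii). Together with the Bessel hypothesis on $\psi$ this says exactly that $\{\psi_x\}$ is a continuous family of local atoms for $\H_0$.

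For (i)$\Rightarrow$(ii) the key move is to pre-compose the local-atom functionals with the projection. Given $\{c_x\}$ with atomic bound $\gamma$, for an arbitrary $f\in\H$ I would set $a_f(x):=c_x(P_{\H_0}f)$. Since $P_{\H_0}f\in\H_0$, part i) of the local-atoms definition yields $\|a_f\|_2^2\le\gamma\|P_{\H_0}f\|^2\le\gamma\|f\|^2$, and part ii) applied to the vector $P_{\H_0}f$ gives $\ip{P_{\H_0}f}{g}=\int_X a_f(x)\ip{\psi_x}{g}\,d\mu(x)$ for every $g\in\H$; as $\psi$ is Bessel, this is precisely the definition of a continuous atomic system for $P_{\H_0}$. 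The cycle (i)$\Rightarrow$(ii)$\Leftrightarrow$(iii)$\Leftrightarrow$(iv)$\Rightarrow$(i) then closes all four equivalences.

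There is no deep obstacle once Theorem~\ref{thm: char cont atomic system} is available; the only points requiring care are bookkeeping. The local-atoms condition reconstructs $\ip{f}{h}$ for $f$ \emph{already} in $\H_0$, whereas an atomic system for $P_{\H_0}$ reconstructs $\ip{P_{\H_0}f}{g}$ for \emph{arbitrary} $f\in\H$; the bridge between the two must run through $P_{\H_0}f\in\H_0$ and use $\|P_{\H_0}\|\le1$ so that the constant $\gamma$ is not inflated. One must also keep in mind that the $c_x$ are only assumed linear, not individually bounded, so it is the $L^2$-bound (and not any pointwise estimate) that has to be transported. Finally, I would make sure to invoke the standing Bessel assumption on $\psi$ explicitly each time the upper frame bound or the Bessel clause of an atomic system is needed, since it is exactly this hypothesis that lets the one-sided inequality in (iii) coincide with the full continuous $P_{\H_0}$-frame condition of Theorem~\ref{thm: char cont atomic system}.
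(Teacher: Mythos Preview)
Your proposal is correct and follows essentially the same approach as the paper, which states the result as an immediate corollary of Theorem~\ref{thm: char cont atomic system} applied with $K=P_{\H_0}$ (the paper gives no explicit proof, merely pointing to the analogous discrete result in G\u{a}vru\c{t}a). Your write-up simply fills in the two short bridging implications (iv)$\Rightarrow$(i) and (i)$\Rightarrow$(ii) that connect the local-atoms condition to the three conditions already handled by the theorem, and these are exactly the natural ones.
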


Not even if $\J=\H$ a Bessel function $\psi:X\to\H$ and a  $K$-dual $\phi:X\to\H$ of its are interchangeable, in general. However, if we strenghten hypotheses on $K$, it can be proved the existence of a function with range in $\H$ which is interchangeable with  $\psi$ in the weak decomposition of $\mathcal{R}(K)\subset\H$  (see also \cite[Theorem 3.2]{AROR}).

\begin{thm}\label{thm: interchang boundd}
	Let $K\in\B(\J,\H)$ with closed range $\mathcal{R}(K)$. Let   $\psi$ be a continuous $K$-frame and $\phi$ a Bessel  $K$-dual of its. Then,\begin{itemize}
		\item[i)]  the function $\vartheta:x\in X\to\vartheta_x\in\H$ with $\vartheta_x:=(K^\dag_{\upharpoonright\mathcal{R}(K)})^*\phi_x\in\H$, for every  $x\in X$, is Bessel for $\mathcal{R}(K)$ and   interchangeable with  $\psi$ 	for any $h\in \mathcal{R}(K)$, i.e.  \begin{equation*}\label{cont interch f bdd}
\ip{h}{f}_\H= 	\int_X\ip{h}{\vartheta_x}_\H\ip{\psi_x}{f}_\H d\mu(x)=
	\int_X\ip{h}{\psi_x}_\H\ip{\vartheta_x}{f}_\H d\mu(x),\, f \in\H;\end{equation*}

		\item[ii)]   $\vartheta$ is a  continuous $K$-frame for $\H$ and $K^* \vartheta$ and $K^* \psi$ are Bessel  $K$-duals of $\psi$ and of $\vartheta$ respectively. In particular,  for every $h\in \H$ \begin{eqnarray}\label{eq: decomp of K}
		\ip{Kf}{h}_\H&=& 	\int_X\ip{f}{K^* \vartheta_x}_\J  \ip{\psi_x}{h}_\H d\mu(x)\\&=&\nonumber
		\int_X
		\ip{f}{K^* \psi_x}_\J  \ip{\vartheta_x}{h}_\H d\mu(x),\qquad \forall f \in\J.	\end{eqnarray}
	\end{itemize}
	\end{thm}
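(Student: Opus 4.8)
The plan is to realize the restricted pseudo-inverse as a genuine bounded operator between Hilbert spaces and then push the two claimed weak decompositions through the $K$- and $K^*$-expansions already at hand. Since $K\in\B(\J,\H)$ has closed range, $\mathcal{R}(K)$ is itself a Hilbert space and, by Lemma~\ref{lem: pseudoinverse unb}, $K^\dag\in\B(\H,\J)$ exists with $KK^\dag h=h$ for $h\in\mathcal{R}(K)$. I would set $W:=K^\dag_{\upharpoonright\mathcal{R}(K)}\in\B(\mathcal{R}(K),\J)$, so that $\vartheta_x=W^*\phi_x\in\mathcal{R}(K)\subset\H$ with $W^*\in\B(\J,\mathcal{R}(K))$. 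Writing $P$ for the orthogonal projection of $\H$ onto $\mathcal{R}(K)$, the identity $\ip{h}{\vartheta_x}_\H=\ip{Ph}{W^*\phi_x}_\H=\ip{W(Ph)}{\phi_x}_\J$ holds for every $h\in\H$; combining it with the Bessel bound $\beta$ of $\phi$ and the boundedness of $W$ gives $\int_X|\ip{h}{\vartheta_x}_\H|^2d\mu(x)\le\beta\|W\|^2\|h\|_\H^2$ for all $h\in\H$, so $\vartheta$ is a Bessel function (in particular Bessel for $\mathcal{R}(K)$), as required in i).

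For the first interchange formula in i), I would use that $h=K(Wh)$ for every $h\in\mathcal{R}(K)$ (because $KK^\dag h=h$ there). Feeding $g:=Wh\in\J$ into the $K$-dual expansion $\ip{Kg}{f}_\H=\int_X\ip{g}{\phi_x}_\J\ip{\psi_x}{f}_\H d\mu(x)$ from Theorem~\ref{thm: char cont atomic system} and using $\ip{Wh}{\phi_x}_\J=\ip{h}{W^*\phi_x}_\H=\ip{h}{\vartheta_x}_\H$ yields $\ip{h}{f}_\H=\int_X\ip{h}{\vartheta_x}_\H\ip{\psi_x}{f}_\H d\mu(x)$ for all $h\in\mathcal{R}(K)$, $f\in\H$. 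The second interchange formula is the delicate one: starting from $\ip{\vartheta_x}{f}_\H=\ip{W^*\phi_x}{Pf}_\H=\ip{\phi_x}{W(Pf)}_\J$ (valid since $W^*\phi_x\in\mathcal{R}(K)$), I would invoke the $K^*$-expansion $\ip{K^*h}{g}_\J=\int_X\ip{h}{\psi_x}_\H\ip{\phi_x}{g}_\J d\mu(x)$ of Remark~\ref{strong decomp of K^*} with $g:=W(Pf)$, turning $\int_X\ip{h}{\psi_x}_\H\ip{\vartheta_x}{f}_\H d\mu(x)$ into $\ip{K^*h}{W(Pf)}_\J=\ip{h}{KW(Pf)}_\H$; since $Pf\in\mathcal{R}(K)$ forces $KW(Pf)=KK^\dag(Pf)=Pf$ and $h\in\mathcal{R}(K)$ forces $\ip{h}{Pf}_\H=\ip{h}{f}_\H$, this establishes the second equality.

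For part ii) I would first note that $K^*\psi$ and $K^*\vartheta$ are Bessel: indeed $\int_X|\ip{f}{K^*\psi_x}_\J|^2d\mu(x)=\int_X|\ip{Kf}{\psi_x}_\H|^2d\mu(x)\le\beta_\psi\|K\|^2\|f\|_\J^2$, and analogously for $K^*\vartheta$ using the Bessel bound of $\vartheta$ just obtained. The two expansions in \eqref{eq: decomp of K} then fall out of i) by the substitution $h\mapsto Kf$: replacing $h$ by $Kf\in\mathcal{R}(K)$ and $f$ by $h$ in the first (resp.\ second) interchange identity, and rewriting $\ip{Kf}{\vartheta_x}_\H=\ip{f}{K^*\vartheta_x}_\J$ and $\ip{Kf}{\psi_x}_\H=\ip{f}{K^*\psi_x}_\J$, gives $\ip{Kf}{h}_\H=\int_X\ip{f}{K^*\vartheta_x}_\J\ip{\psi_x}{h}_\H d\mu(x)$ and $\ip{Kf}{h}_\H=\int_X\ip{f}{K^*\psi_x}_\J\ip{\vartheta_x}{h}_\H d\mu(x)$. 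These identities exhibit $K^*\vartheta$ and $K^*\psi$ as Bessel $K$-duals of $\psi$ and of $\vartheta$ respectively; since $\vartheta$ is Bessel and possesses a Bessel $K$-dual, Theorem~\ref{thm: char cont atomic system} shows $\vartheta$ is a continuous $K$-frame for $\H$. I expect the main obstacle to be the bookkeeping around the restricted pseudo-inverse — tracking where each vector lives ($\mathcal{R}(K)$ versus $\H$ versus $\J$) and inserting the projection $P$ at the correct place in the second interchange formula — rather than any deep analytic difficulty.
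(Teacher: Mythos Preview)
Your proof is correct and follows the same strategy as the paper: for part i) the paper simply cites \cite[Theorem 3.2]{AROR}, whereas you spell out precisely those computations (Bessel bound via boundedness of $K^\dag$, the two interchange identities via the $K$-dual expansion and the $K^*$-expansion of Remark~\ref{strong decomp of K^*}); for part ii) your argument matches the paper's almost line by line, deriving \eqref{eq: decomp of K} from i), checking that $K^*\vartheta$ and $K^*\psi$ are Bessel, and invoking Theorem~\ref{thm: char cont atomic system} to conclude that $\vartheta$ is a continuous $K$-frame.
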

\begin{proof}
	$i)$ See \cite[Theorem 3.2]{AROR} with obvious adjustments.\\ 
	$ii)$ Clearly \eqref{eq: decomp of K} follows from $i)$. The function $\vartheta$ is a continuous $K$-frame for $\H$ by $i)$ and \eqref{eq: decomp of K}, taking for all $f\in\J$, $a_f(x)=\ip{f}{K^* \psi_x}_\J  $, for every $x\in X$. The functions $K^* \vartheta$ and $K^* \psi$ are Bessel for $\J$, indeed  for all $f\in\J$, the maps $x \to \ip{K^\dag_{\upharpoonright\mathcal{R}(K)} Kf}{\phi_x}_\J=\ip{f}{K^*\vartheta_x}_\J$ and $x \to \ip{Kf}{ \psi_x}_\H=\ip{f}{K^*\psi_x}_\J$ are measurable functions on $X$ and \begin{eqnarray*}
		\int_X|\ip{f}{K^* \vartheta_x}_\J|^2d\mu(x)&=&\int_X|\ip{Kf}{ \vartheta_x}_\H|^2d\mu(x)\\
		&\leq&\beta\|Kf\|_\H^2\leq\beta\|K\|_{\J,\H}^2\|f\|_\J^2,\qquad\forall f\in\J\end{eqnarray*}
	for some $\beta>0$.
	Similarly, $K^* \psi$ is Bessel.
	The proof is concluded by using Theorem \ref{thm: char cont atomic system}.
\end{proof}
\berem  Consider the function $\psi:x\in X \to\psi_x\in\H$. In  this section  the frame operator $S_\psi $ of $\psi$ will be  denoted by
\begin{equation*}\label{eq: frame operator}
\ip{S_\psi f}{g}_\H=\int_X \ip{f}{\psi_x}_\H\ip{\psi_x}{g}_\H d\mu(x),\qquad f\in\D(S_\psi ), g\in\H
\end{equation*}where $$\D(S_\psi )=\{f\in \H:\int_X \ip{f}{\psi_x}_\H \psi_xd\mu(x)\,\, \mbox{converges weakly in }\H\}$$ Later on (see Remark \ref{rem: S everyw def}) we will see that, as for continuous $K$-frames with $K\in\B(\H)$, the domain $\D(S_\psi )$ of the frame operator of a continuous $K$-frame with $K\in\B(\J,\H)$ coincides with the whole $\H$ .\\
The analysis operator  of the function $\psi$ will be indicated by $C_\psi :h\in\D(C_\psi )\subset\H\to \ip{h}{\psi_x}_\H\in L^2(X,\mu)$  (strongly) defined, for every $h\in\D(C_\psi )$ and for every $x\in X$, by \begin{equation*}(C_\psi h)(x)=\ip{h}{\psi_x}_\H.\end{equation*} and the synthesis operator of $\psi$ by $C_\psi^*:\D(C_\psi^*)\subset L^2(X,\mu)\to \H$ will be denoted by:
\begin{equation*}
\ip{C_\psi^*a}{h}_\H=\int_{X}a(x)\ip{\psi_x}{h}_\H d\mu(x),\qquad a\in \D(C_\psi^*),\,h\in\H
\end{equation*} where
$$\D(C_\psi^*):=\left \{a\in L^2(X,\mu):\int_Xa(x)\ip{\psi_x}{h}_\H d\mu(x) \mbox{ exists } \forall h\in \H\right \}.$$
\enrem

We can characterize continuous $K$-frames for $\H$ by means of both their frame and synthesis operators.
{\begin{thm}\label{thm: decompos op} Let $(X,\mu)$ be a $\sigma$-finite measure space.
		Let $K\in \B(\J,\H)$ and $\psi:x\in X \to\psi_x\in\H$ such that for all $f\in\H$, the map $x \to \ip{f}{\psi_x}$ is a measurable function on $X$. Then the following statements are equivalent.
		\begin{enumerate}
			\item[i)] $\psi$ is a continuous $K$-frame for $\H$;
			\item[ii)] $C_\psi^*$ is bounded and $\mathcal{R}(K)\subset\mathcal{R}(C_\psi^*)$;\item[iii)] $C_\psi^*$ is bounded and there exists $M\in \B(\J,L^2(X,\mu))$ such that $K=C_\psi^*M$.
		\item[iv)] $S_\psi =C_\psi^*C_\psi \geq\alpha KK^*$ on $\H$, for some $\alpha>0$ \footnote{ i.e.  and $\alpha \ip{KK^*f}{f}_\H\leq
			\ip{S_\psi  f}{f}_\H$ for every $f\in\H$.} and $\psi$ is a  Bessel function for $\H$;	\item[v)]$K=\left(S_\psi^{1/2}\right)U$, for some $U\in\B(\J,\H)$. \end{enumerate}
	\end{thm}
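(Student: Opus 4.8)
The plan is to translate the two inequalities of Definition \ref{def: cont_K_frame} into statements about the operators $C_\psi$, $C_\psi^*$ and $S_\psi$, and then feed these into the Douglas-type Lemma \ref{doug gen unb} together with the classical majorization theorem. The running dictionary I would set up at the start is
$\int_X |\ip{h}{\psi_x}_\H|^2 d\mu(x) = \|C_\psi h\|_2^2 = \ip{S_\psi h}{h}_\H$ (valid for $h\in\D(C_\psi)$) and $\|K^*h\|_\J^2 = \ip{KK^*h}{h}_\H$, combined with Proposition \ref{prop: syntesis and analysis operator}, which says that $\psi$ is Bessel exactly when $C_\psi$ (equivalently $C_\psi^*$) is bounded and everywhere defined, so that $S_\psi = C_\psi^* C_\psi \in\B(\H)$. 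With this in hand I would first dispose of the equivalence i)$\Leftrightarrow$iv), which is a pure restatement: the upper inequality $\int_X |\ip{h}{\psi_x}_\H|^2 d\mu(x)\le\beta\|h\|_\H^2$ is the Bessel condition, while the lower inequality $\alpha\|K^*h\|_\J^2\le\int_X |\ip{h}{\psi_x}_\H|^2 d\mu(x)$ rewrites, for all $h\in\H$, as $\alpha\ip{KK^*h}{h}_\H\le\ip{S_\psi h}{h}_\H$, i.e. $S_\psi\ge\alpha KK^*$.

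Next I would prove i)$\Leftrightarrow$ii)$\Leftrightarrow$iii) by applying Lemma \ref{doug gen unb} with $T_1:=K\in\B(\J,\H)$ and $T_2:=C_\psi$. Under the Bessel hypothesis (which is present in each of i), ii), iii)) the operator $C_\psi$ is bounded and everywhere defined, so $T_1$ is closed, $T_2$ is a bounded operator on $\H$, and the compatibility requirement $\D(T_1^*)=\D(C_\psi)=\H$ holds. With this identification, condition i) of Lemma \ref{doug gen unb}, namely $\|K^*f\|_\J\le\lambda\|C_\psi f\|_2$, is exactly the lower $K$-frame bound; condition ii), namely $K=T_2^*U=C_\psi^* U$ for some $U\in\B(\J,L^2(X,\mu))$, is exactly statement iii) with $M=U$; and condition iii), namely $\mathcal{R}(K)\subset\mathcal{R}(C_\psi^*)$, is exactly statement ii). Since $T_2$ is bounded, the lemma delivers the equivalence of all three, which, combined with the fact that i), ii), iii) all carry the boundedness of $C_\psi^*$, closes this chain.

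It then remains to link iv) and v), where I would invoke the classical bounded Douglas majorization theorem \cite{doug}, now for the bounded operators $K$ and $S_\psi^{1/2}$. Assuming iv), $\psi$ is Bessel, so $S_\psi\in\B(\H)$ is positive and $S_\psi^{1/2}$ is a bounded positive self-adjoint operator with $S_\psi=S_\psi^{1/2}(S_\psi^{1/2})^*$; the inequality $\alpha KK^*\le S_\psi$ then reads $KK^*\le\alpha^{-1}S_\psi^{1/2}(S_\psi^{1/2})^*$, whence majorization produces $U\in\B(\J,\H)$ with $K=S_\psi^{1/2}U$, i.e. v). Conversely, reading v) with $S_\psi^{1/2}$ bounded (equivalently $\psi$ Bessel), one computes $KK^*=S_\psi^{1/2}UU^*S_\psi^{1/2}\le\|U\|^2 S_\psi$, which is iv) with $\alpha=\|U\|^{-2}$ (the case $K=0$ being trivial). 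By transitivity this gives the equivalence of all five statements.

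The point requiring the most care is not any single implication but keeping the Bessel/boundedness hypothesis consistently in play: it is what makes $C_\psi$ everywhere defined in the application of Lemma \ref{doug gen unb}, and what makes $S_\psi^{1/2}$ a genuine bounded operator in iv)$\Leftrightarrow$v). I would therefore state explicitly at the outset that, across all five items, the boundedness of $C_\psi^*$ (equivalently that $\psi$ is a Bessel function, equivalently that $S_\psi\in\B(\H)$) is available: it is the upper bound in i), it is stated in ii), iii), iv), and it is forced by the very meaning of $S_\psi^{1/2}\in\B(\H)$ in v), so the two Douglas arguments may be applied on the common domain $\H$ without domain pathologies. The only other mild subtlety is that the majorization is used between the two distinct Hilbert spaces $\J$ and $\H$, which is the standard bounded formulation.
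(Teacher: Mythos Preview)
Your proposal is correct and follows essentially the same strategy as the paper: both rely on the Douglas-type Lemma~\ref{doug gen unb} applied with $T_1=K$ and $T_2=C_\psi$ (for i)$\Leftrightarrow$ii)$\Leftrightarrow$iii)) and with $T_2=S_\psi^{1/2}$ (for the link to v)), together with the identification $\int_X|\ip{h}{\psi_x}_\H|^2d\mu(x)=\|C_\psi h\|_2^2=\ip{S_\psi h}{h}_\H$. The only differences are organizational: the paper cites \cite[Lemma~2.4]{AROR} for i)$\Leftrightarrow$iv) and proves i)$\Leftrightarrow$v) directly, whereas you treat i)$\Leftrightarrow$iv) as a restatement and pass iv)$\Leftrightarrow$v); your explicit remark that the Bessel hypothesis is available in all five items (and is what makes $C_\psi$ and $S_\psi^{1/2}$ bounded, so Lemma~\ref{doug gen unb} applies on the common domain $\H$) is a useful clarification the paper leaves implicit.
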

	\begin{proof} 
		$i)\Rightarrow ii)$ The   operator $C_\psi^*$ is bounded by Proposition \ref{prop: syntesis and analysis operator}. Moreover,
		for every $h\in \H$ 		
		\begin{eqnarray*}\alpha \|K^*h\|_\J^2 &\leq& \int_X
			|\ip{h}{\psi_x}_\H|^2 d\mu(x) =\|C_\psi 
			h\|_2^2.
		\end{eqnarray*} By Lemma \ref{doug gen unb},
		it follows that $\mathcal{R}(K)\subset\mathcal{R}(C_\psi^*)$.\\ 	$ii)\Rightarrow iii)$ By Lemma \ref{doug gen unb} there exists a bounded operator $M:\J\to L^2(X,\mu)$ such that $K=C_\psi^*M$.\\$iii)\Rightarrow i)$ $\psi$ is a continuous $K$-frame for $\H$ since	\begin{eqnarray*} \|K^*h\|_\J^2&=&\|M^*C_\psi h\|_\J^2 \leq \|M^*\|_{L^2,\J}^2\|C_\psi h\|_\J^2\\&=&\|M^*\|_{L^2,\J}^2\int_X
			|\ip{h}{\psi_x}_\H|^2 d\mu(x)\leq  \beta\|M^*\|_{L^2,\J}^2\|h\|_\H^2	\end{eqnarray*} by the boundedness of $C_\psi $.	\\$i)\Leftrightarrow iv)$ See \cite[Lemma 2.4]{AROR} with $\Lambda_x=\ip{f}{\psi_x}$ for every $f\in\H$, with $x\in X$.\\
			$i)\Rightarrow v)$ 	 The operator  $S_\psi $  is  positive, bounded and everywhere  defined in $\H$ because, by definition of continuous  $K$-frame for $\H$, there exists $\beta>0$ such that  $$0\leq\ip{S_\psi f}{f}_\H=\int_X|\ip{f}{\psi_x}_\H|^2d\mu(x)\leq\beta\|f\|_\H^2, \, \forall f\in\H.$$ 
			Hence $S_\psi =S_\psi^{1/2} S_\psi^{1/2}$, with $S_\psi^{1/2}$  positive self-adjoint operator and, by hypothesis, there exists $\alpha>0$ such that $$\alpha \|K^*f\|_\H^2\leq\left \|S_\psi^{1/2} f\right \|_\H^2,\qquad\forall f\in \H.$$
			By Lemma \ref{doug gen unb}, there exists $U\in\B(\J,\H)$ such that $K=\left(S_\psi^{1/2}\right) U.$\\
			$v)\Rightarrow i)$ By hypothesis  there exists $U\in\B(\J,\H)$ such that $
			K^*=
			\left(\left(S_\psi^{1/2}\right)U\right)^*=
			U^*S_\psi^{1/2}$, then, for every $f\in \H$
			$$		\|K^*f\|_\J^2=\left \|U^*S_\psi^{1/2}f \right \|_\J^2\leq\left \|U^*\right\|_{\H,\J}^2\left \|S_\psi^{1/2}f\right \|_\H^2\leq\left \|U^*\right\|_{\H,\J}^2\left\|S_\psi^{1/2}\right\|_{\H,\H}^2\left \|f\right \|_\H^2,	$$ hence $\psi$ is a continuous  $K$-frame for $\H$.\end{proof}

	\berem Nothing guarantees the closedness of $\mathcal{R}(C_\psi^* )$, then by Theorem \ref{thm: decompos op} $iii)$  it follows that a continuous $K$-frame is not automatically a continuous frame for the subspace  $\overline{\text{span}}{\{\psi_x\}}$, the closed linear
	span of $\{\psi_x\}$, which is in turn a Hilbert space (see \cite[Corollary 5.5.2]{ole} in the discrete case) 
	\enrem

\berem\label{rem: S everyw def} As usual, the frame operator $S_\psi$ of  a continuous $K$-frame for $\H$, with $K\in\B(\J,\H)$, is a linear positive bounded operator in $\H$, indeed $S_\psi =C_\psi^*C_\psi $ with $C_\psi \in \B(\H,L^2(X,\mu))$, however, it is not invertible in general. Nevertheless, if we strenghten the hypotheses on $K$ and $X$, $S_\psi $ can be invertible on its range (see 
\cite[p. 1245]{xzg} for the discrete case). \enrem
\begin{prop} Let $(X,\mu)$ be a $\sigma$-finite measure space,  $\psi:x\in X\to\psi_x\in\H$  a continuous $K$-frame for $\H$ with $K\in\B(\J,\H)$ having closed range. Then  $S_\psi $ is linear, bounded, self-adjoint, positive and invertible on $\mathcal{R}(K)$.
\end{prop}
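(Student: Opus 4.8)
The plan is to reduce everything to the two-sided estimate that a continuous $K$-frame forces on the quadratic form of $S_\psi$, and then to exploit the closed-range hypothesis on $K$. First I would dispose of all the adjectives except invertibility: by Remark \ref{rem: S everyw def} we already know that $S_\psi=C_\psi^*C_\psi$ with $C_\psi\in\B(\H,L^2(X,\mu))$, so $S_\psi$ is linear, bounded, self-adjoint and positive on the whole of $\H$. The only genuinely new point is therefore invertibility on $\mathcal{R}(K)$, and this is where the hypothesis $\overline{\mathcal{R}(K)}=\mathcal{R}(K)$ enters.

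Next I would record the quadratic-form identity
$$\ip{S_\psi h}{h}_\H=\|C_\psi h\|_2^2=\int_X|\ip{h}{\psi_x}_\H|^2\,d\mu(x),\qquad\forall h\in\H,$$
and combine it with the lower continuous $K$-frame bound from Definition \ref{def: cont_K_frame}, which yields $\alpha\|K^*h\|_\J^2\leq\ip{S_\psi h}{h}_\H$ for all $h\in\H$. The decisive step is then a closed-range argument on $K^*$: since $\mathcal{R}(K)$ is closed, the closed range theorem gives that $\mathcal{R}(K^*)$ is closed and, via the bounded inverse theorem, that $K^*$ is bounded below on $\mathcal{N}(K^*)^\perp=\overline{\mathcal{R}(K)}=\mathcal{R}(K)$, i.e. there is $\delta>0$ with $\|K^*h\|_\J\geq\delta\|h\|_\H$ for every $h\in\mathcal{R}(K)$. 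Feeding this into the previous inequality produces the lower form bound
$$\ip{S_\psi h}{h}_\H\geq\alpha\delta^2\|h\|_\H^2,\qquad\forall h\in\mathcal{R}(K),$$
and by the Cauchy--Schwarz inequality $\|S_\psi h\|_\H\geq\alpha\delta^2\|h\|_\H$ on $\mathcal{R}(K)$, so $S_\psi$ is injective and bounded below on $\mathcal{R}(K)$; this is exactly invertibility of $S_\psi$ on $\mathcal{R}(K)$ onto its image, with bounded inverse. If one prefers a genuinely self-adjoint, positive, invertible operator on the Hilbert space $\mathcal{R}(K)$, I would compress and work with $PS_\psi P$ restricted to $\mathcal{R}(K)$, where $P$ is the orthogonal projection onto $\mathcal{R}(K)$: the estimate $\ip{PS_\psi Ph}{h}_\H=\ip{S_\psi h}{h}_\H\geq\alpha\delta^2\|h\|_\H^2$ for $h\in\mathcal{R}(K)$ shows that this compression is a bounded, positive, bounded-below self-adjoint operator on $\mathcal{R}(K)$, hence boundedly invertible there.

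The main obstacle is the closed-range step: one must pass from ``$\mathcal{R}(K)$ closed'' to ``$K^*$ bounded below on $\mathcal{R}(K)$'', which rests on the closed range theorem together with the identification $\mathcal{N}(K^*)^\perp=\overline{\mathcal{R}(K)}$, and one must state invertibility \emph{on} $\mathcal{R}(K)$ carefully, since $S_\psi$ need not leave $\mathcal{R}(K)$ invariant (so the clean self-adjoint invertibility is about the compression $PS_\psi P$, while $S_\psi$ itself is merely bounded below on $\mathcal{R}(K)$). The $\sigma$-finiteness and the upper Bessel bound play only a supporting role, guaranteeing through Proposition \ref{prop: syntesis and analysis operator} and Remark \ref{rem: S everyw def} that $C_\psi$ is everywhere defined and bounded, so that $S_\psi$ is a genuine bounded, everywhere-defined operator to which the form estimates apply.
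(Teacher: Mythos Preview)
Your argument is correct and is essentially the standard one. The paper itself does not supply a proof for this proposition; it states the result immediately after Remark \ref{rem: S everyw def}, pointing only to \cite[p.~1245]{xzg} for the discrete case. What you wrote is precisely the adaptation of that discrete argument: boundedness, positivity and self-adjointness of $S_\psi=C_\psi^*C_\psi$ come for free from the Bessel bound, and the closed-range hypothesis on $K$ is used, via the closed range theorem, to turn the lower $K$-frame inequality $\alpha\|K^*h\|_\J^2\leq\ip{S_\psi h}{h}_\H$ into a genuine coercivity estimate on $\mathcal{R}(K)$.

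Your care in distinguishing between ``$S_\psi$ is bounded below on $\mathcal{R}(K)$'' and ``the compression $PS_\psi P$ is boundedly invertible on $\mathcal{R}(K)$'' is more than the paper (or \cite{xzg}) makes explicit, and it is a worthwhile clarification: the phrase ``invertible on $\mathcal{R}(K)$'' in the literature on $K$-frames is typically used in the latter sense, and your compression argument makes that precise.
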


\subsection{ Continuous atomic systems for unbounded operators $A$ and continuous $A$-frames}\label{subsec: cont atomic sys for unbounded op} The results of Section \ref{sec: cont K-frames} can be used to generalize continuous frames for bounded operators to the case of an unbounded closed and densely defined operator $A:\D(A)\to \H$ viewing it as a bounded operator between two different Hilbert spaces, more precisely, from  the Hilbert space $\H_A=\D(A)[\|\cdot\|_A]$ (where $\|\cdot\|_A$ is the graph norm induced by the graph inner product $\ip{\cdot}{\cdot}_A$) into $\H$. \\

In order to simplify notations, we come back to denote again by $\ip{\cdot}{\cdot}$ and $\|\cdot\|$ the inner product and the norm of $\H$, respectively.\\

We will indicate by  $A^\sharp:\H\to \H_A$  the adjoint of the bounded operator $A:\H_A\to\H$. With this convention, if $A\in\B(\H_A,\H)$, a function $\psi:x\in X\to \psi_x\in\H$  such that
	   for all $f\in\H$, the map $x \to \ip{f}{\psi_x}$ is a measurable function on $X$ is said to be
\begin{enumerate}
	\item[i)] a {\it continuous atomic system} for $A$ if $\psi$ is a  Bessel function and there exists $\gamma>0$  such that for all $f\in\D(A)$ there exists $a_f\in L^2(X,\mu)$, with
	$\|a_f\|_2=\left( \int_X |a_f(x)|^2d\mu(x)\right) ^{1/2}\leq \gamma\|f\|_A$ and for every $g\in\H$
	$$\ip{Af}{g}=\int_X a_f(x)\ip{\psi_x}{g} d\mu(x);$$
	\item[ii)] a {\it continuous $A$-frame} if there exist $\alpha, \beta>0$  such that for every $h\in \H$
	$$
	\alpha \|A^\sharp h\|_A^2 \leq \int_X
	|\ip{h}{\psi_x}|^2 d\mu(x) \leq \beta \|h\|^2.$$

\end{enumerate}
 Theorem \ref{thm: char cont atomic system} and Theorem \ref{thm: decompos op} can be summarized and rewritten as follows.

\begin{cor}
	\label{cor_A-frame_final}
	Let $\psi:x\in X\to \psi_x\in\H$ and suppose that for all $h\in\H$, the map $x \to \ip{h}{\psi_x}$ is a measurable function on $X$. Let $A$ be a closed densely defined operator on $\H$. Then the following are equivalent.
	\begin{itemize}
		\item[i)] $\psi$ is a continuous atomic system for $A$;
		\item[ii)] $\psi$ is a continuous $A$-frame;
		\item[iii)] $\psi$ is a Bessel function and there exists $\phi$ a  Bessel function  of $\H_A$  such that
		\begin{equation*}\label{exp_A}
				\ip{Af}{h}=\int_X \ip{f}{\phi_x}_A\ip{\psi_x}{h} d\mu(x),\qquad \forall f\in \D(A), \forall h\in\H;\end{equation*}
		\item[iv)] $C_\psi^*$  is bounded and $\mathcal{R}(A)\subset\mathcal{R}(C_\psi^*)$;
		\item[v)] $C_\psi^*$  is bounded and there exists $M\in \B(\H_A,L^2(X,\mu))$ such that $A=C_\psi^*M$.
		\item[vi)] $S_\psi =C_\psi^*C_\psi \geq\alpha AA^\sharp$ on $\H$, for some $\alpha>0$  and $\psi$ is a  Bessel function for $\H$;	\item[vii)]$A=\left(S_\psi^{1/2}\right)U$, for some $U\in\B(\H_A,\H)$. 
	\end{itemize}
\end{cor}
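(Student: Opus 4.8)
The plan is to reduce the entire statement to the two characterizations already established for a bounded operator $K\in\B(\J,\H)$—Theorem \ref{thm: char cont atomic system} and Theorem \ref{thm: decompos op}—by choosing $\J=\H_A$ and $K=A$.

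First I would record the structural fact that legitimizes this reduction: since $A$ is closed and densely defined, $\D(A)$ equipped with the graph norm is a Hilbert space $\H_A$, and from $\|Af\|^2\leq\|f\|^2+\|Af\|^2=\|f\|_A^2$ the map $A:\H_A\to\H$ is bounded, with norm at most one. Thus $A\in\B(\H_A,\H)$, its adjoint being the operator $A^\sharp:\H\to\H_A$, and the whole machinery of Section \ref{sec: cont K-frames} applies with $\J=\H_A$.

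Next I would verify that the two notions introduced just before the corollary are precisely the specializations of the general definitions. The continuous atomic system for $A$ in item i) is Definition \ref{cont atom syst for K} read with $\J=\H_A$, because the bound $\|a_f\|_2\leq\gamma\|f\|_A$ is exactly $\|a_f\|_2\leq\gamma\|f\|_\J$; the continuous $A$-frame in item ii) is Definition \ref{def: cont_K_frame} with $K^*=A^\sharp$ and $\|\cdot\|_\J=\|\cdot\|_A$; and the Bessel function $\phi$ of item iii) is a Bessel function into $\J=\H_A$, so that the pairing $\ip{f}{\phi_x}_A$ is the $\J$-inner product.

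With these identifications the equivalences split into two transcribed blocks. The chain i)$\Leftrightarrow$ii)$\Leftrightarrow$iii) is Theorem \ref{thm: char cont atomic system} applied to $K=A$, while ii)$\Leftrightarrow$iv)$\Leftrightarrow$v)$\Leftrightarrow$vi)$\Leftrightarrow$vii) is Theorem \ref{thm: decompos op} applied to $K=A$, where the frame-operator inequality $S_\psi\geq\alpha KK^*$ becomes $S_\psi\geq\alpha AA^\sharp$ and the factorizations $K=C_\psi^*M$, $K=S_\psi^{1/2}U$ become $A=C_\psi^*M$ with $M\in\B(\H_A,L^2(X,\mu))$ and $A=S_\psi^{1/2}U$ with $U\in\B(\H_A,\H)$. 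The only point requiring care—a bookkeeping obstacle rather than a conceptual one—is to keep the adjoint $A^\sharp$ of the bounded map $A:\H_A\to\H$, taken with respect to the graph inner product, distinct from the Hilbert-space adjoint $A^*$ of the unbounded operator; once the inner products are tracked correctly, the corollary follows by concatenating the two theorems.
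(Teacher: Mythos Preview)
Your proposal is correct and follows precisely the paper's own approach: the corollary is stated there as a direct transcription of Theorem \ref{thm: char cont atomic system} and Theorem \ref{thm: decompos op} under the specialization $\J=\H_A$, $K=A$, with no additional argument supplied. Your write-up is in fact more explicit than the paper's, which simply announces that the two theorems ``can be summarized and rewritten'' in this form; your observation that $A\in\B(\H_A,\H)$ with norm at most one, and your care in distinguishing $A^\sharp$ from $A^*$, fill in exactly the bookkeeping the paper leaves to the reader.
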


Note also that if $A\in \B(\H)$, then the graph norm of $A$ is defined on $\H$ and it is equivalent to $\|\cdot\|$, thus our notion of continuous $A$-frame  reduces to that of literature (see e.g. \cite{AROR}).

\section*{Acknowledgements} This work has been supported by the
Gruppo Nazionale per l'Analisi Matematica, la Probabilit\`{a} e le
loro Applicazioni (GNAMPA) of the Istituto Nazionale di Alta
Matematica (INdAM).

\vspace*{0.5cm}

\bibliographystyle{amsplain}

\end{document}